\title[Gowers' Ramsey Theorem with multiple operations and the Lelek fan]{Gowers' Ramsey Theorem with multiple operations and dynamics of the homeomorphism group of the Lelek fan}
\author[D. Barto\v{s}ov\'{a}]{Dana Barto\v{s}ov\'{a}}
\address{Institute de Matem\'atica e Estat\'istica, Universidade de S\~ao Paulo, Brazil}
\email{dana@ime.usp.br}
\author[A. Kwiatkowska]{Aleksandra Kwiatkowska}
\address{Institut f\"{u}r Mathematische Logik, Universit\"{a}t  M\"{u}nster,  Germany and 
Instytut Matematyczny, Uniwersytet Wroc{\l}awski,   Poland}
\email{kwiatkoa@uni-muenster.de}
\subjclass[2010]{05D10, 37B05, 54F15, 03C98}  
\keywords{Gowers' Ramsey Theorem, Lelek fan, Fra\"{i}ss\'{e} limits, extreme amenability}
\theoremstyle{plain}
\newtheorem{thm}{Theorem}[section]
\newtheorem{lemma}[thm]{Lemma}
\newtheorem{cor}[thm]{\bf Corollary}
\newtheorem{rem}[thm]{\bf Remark}
\theoremstyle{prop}
\newtheorem{prop}[thm]{\bf Proposition}
\newtheorem{question}[thm]{\bf Question}
\newtheorem*{claim*}{\bf Claim}
\newtheorem{definition}[thm]{\bf Definition}
\def\fin{{\rm{ FIN }}}
\def\supp{{\rm{ supp }}}
\def\tp{{\rm{ tp }}}
\def\id{{\rm{ id }}}
\def\min{{\rm{ min }}}
\def\he{{\rm{ ht }}}
\newcommand{\aut}{{\rm Aut}{(\lel)}}
\newcommand{\lel}{\mathbb{L}}
\newcommand{\auto}{{\rm Aut}{(\lel_<)}}
\def\g{{\mathbb{G}}}
\def\G{{\mathcal{ G}}}
\def\F{{\mathcal{ F}}}
\def\u{{\mathcal{ U}}}
\begin{document}

\begin{abstract}
We generalize the finite version of Gowers' Ramsey theorem to multiple tetris-like operations and apply it to show that 
{a group of homeomorphisms that preserve a  ``typical'' linear order of branches
of the Lelek fan, a  compact connected metric space with many symmetries,}  is extremely amenable.

\end{abstract}

\maketitle

\section{Introduction}\label{intro}

In \cite{G}, Gowers proved a generalisation of Hindman's finite sums theorem in order to show
the oscillation stability of the unit sphere in the Banach space $c_0.$ 
Recently, Tyros in \cite{T} and   Ojeda-Aristizabal in \cite{OA}  independently gave constructive combinatorial proofs of the finite version of  Gowers' theorem.

This article aims  to give a new Ramsey theorem, which generalizes the finite version of
Gowers' Ramsey theorem to multiple operations (Theorem \ref{Tstepup_d}), and most importantly
 to establish a surprising connection between our Ramsey theorem and the dynamics of the homeomorphism group of the Lelek fan -- a compact connected metric space with
many symmetries.

Our work was motivated by a striking correspondence between  structural Ramsey theory, Fra\"{i}ss\'{e} theory, and topological dynamics of automorphism groups, which was established by Kechris, Pestov and Todor\v{c}evi\'c in {\cite{KPT}},
 and further extended by Nguyen van Th\'e  in \cite{NVT}. In these articles, they characterized  a strong fixed point property, called extreme amenability, of automorphism groups
in terms of the Ramsey property. Here a topological group is \emph{extremely amenable} if every continuous action
on a compact Hausdorff space admits a fixed point.
 For instance, using the Ramsey property for linearly ordered finite metric spaces by Ne\v{s}et\v{r}il \cite{N}, 
the authors of \cite{KPT} showed that the isometry group of the separable Urysohn {metric} space  is extremely amenable. This result was originally
proved by Pestov in \cite{P} using  concentration of measure techniques. Further, 
applying the Ramsey property for finite linearly ordered graphs (Ne\v{s}et\v{ril}-R\"{o}dl, \cite{NR1} and \cite{NR2}), finite linearly ordered  hypergraphs (Ne\v{s}et\v{ril}-R\"{o}dl, \cite{NR1} and \cite{NR2}; Abramson-Harrington, \cite{AH}), and finite naturally ordered vector spaces over a finite field (Graham-Leeb-Rothschild, \cite{GLR}), 
Kechris, Pestov and Todor\v{c}evi\'c 
 showed that the groups of automorphisms of the random ordered graph, the random ordered hypergraph, and {the ordered}  $\aleph_0$-dimensional vector space over a finite field, respectively,
 are extremely amenable.

In this article, we dualize the Kechris-Pestov-Todor\v{c}evi\'c correspondence from \cite{KPT} to the projective Fra\"{i}ss\'{e} setting 
(Section \ref{KPTcorrespondence})
 and give its first application, namely  to the dynamics of a {certain natural} group of  homeomorphisms of the Lelek fan
(Section \ref{dynamics}).
 The projective Fra\"{i}ss\'{e} theory was originally developed by Irwin and Solecki in \cite{IS} in order to 
capture a well-known
 {compact and connected  metric space -- the pseudo-arc}.

\section{Discussion of results}

\subsection{Dynamics of the homeomorphism group of the Lelek fan}\label{dynamo}

A {\em continuum} is a compact connected  metric space. Denoting by $C$ the Cantor set and by $[0,1]$ the unit interval,
one defines the {\em Cantor fan} to be the 
quotient of $C\times [0,1]$ 
by the equivalence relation $\sim$ given by $(a,b)\sim (c,d)$ if and only if either $(a,b)=(c,d)$ or $b=d=0.$
For a continuum $X,$ a point $x\in X$ is an {\em endpoint} in $X$ if for every homeomorphic embedding $h:[0,1]\to X$ with $x$ in the image of $h$ either $x=h(0)$ or $x=h(1).$ The {\em Lelek fan} $L$, constructed by Lelek  in \cite{L}, can be characterized as the unique non-degenerate subcontinuum of the Cantor fan whose endpoints are dense (see \cite{BO} and \cite{C}).
Denote by $v$ the {\em top} $(0,0)/\!\!\sim$ of the Lelek fan. 
The ``endpoint'' and the ``top'' belong to the standard terminology in continuum theory. We point out that when we think of the Cantor fan,
the top point is often really at the bottom.

We will use the description of the Lelek fan via the class of finite fans as in \cite{BK}, where by a {\em fan} we mean an  undirected connected simple graph with all loops, with no cycles of the length greater than one,
and with a distinguished point $r$, called the {\em root}, such that all elements other than $r$ have degree at most 2. We will study the class 
$\F$ of finite fans and the class
$\F_<$ of finite fans expanded by a linear order on the set of branches. Families $\F$ and  $\F_<$ form projective Fra\"{i}ss\'e classes and therefore have  projective Fra\"{i}ss\'e limits, as defined by Irwin and Solecki \cite{IS} dualizing the classical (injective)  Fra\"{i}ss\'e theory from model theory. 
The projective Fra\"{i}ss\'e limit of $\F$ is the Lelek fan, as proved in  \cite{BK} (see also Section \ref{Lelekcons}), 
and the projective Fra\"{i}ss\'e limit of $\F_<$
will turn out to be a ``branch-ordered'' Lelek fan.
We provide necessary basics about projective Fra\"{i}ss\'e classes and projective Fra\"{i}ss\'e limits in 
Section \ref{Lelekcons}.

For a class $\G$ of finite structures and 
$A,B\in\G,$ we denote by ${B \choose A}$ the set of all epimorphisms (that is, surjective maps preserving the structure;
see Section  \ref{Lelekcons} for the definition of a structure and of an epimorphism) from $B$ onto $A.$
We say that $\G$  is a {\em Ramsey class} or that it has the {\em Ramsey property}  if for every $A,B\in \G$ and every natural number  $r\geq 2$ there exists $C\in \G$ such that for every colouring $c$ of ${C \choose A}$ with $r$ colours there exists $g\in {C\choose B}$ such that ${B\choose A}\circ g=\{f\circ g: f\in {B\choose A}\}$ is {\em{$c$-monochromatic}}, that is, $c$ restricted to  ${B\choose A}\circ g$ is constant. 

Typically a projective Fra\"{i}ss\'e class is not a Ramsey class, however, it can become one when expanded by more relations such as a linear order.
This is the case also for the class $\F$, while $\F$ is not a Ramsey class, the natural expansion $\F_<$,
as we show using Theorem \ref{Tstepup_d} and
Corollary \ref{dhe}, is a Ramsey class.

\begin{thm}\label{ordfans}
The class $\F_<$ is a Ramsey class.
\end{thm}

Let $\G$ be a projective  Fra\"{i}ss\'e family with the projective  Fra\"{i}ss\'e limit $\g$. 
Let $G={\rm Aut}(\g)$ be the automorphism group of $\g$.
We say that $\G$ is {\em rigid} if for every $A\in \G$, ${\rm Aut}(A)$ is trivial.
In Section \ref{KPTcorrespondence}, we discuss and dualize the Kechris-Pestov-Todor\v{c}evi\'c correspondence to the projective setting showing the following.
\begin{thm}\label{kpt}
The following are equivalent
\begin{enumerate}
\item The group $G$ is extremely amenable.
\item The family $\G$ is a Ramsey class and it consists of rigid elements.
\end{enumerate}
\end{thm}

For a topological group $G,$ a $G${\em-flow} (or a {\em flow} if there is no ambiguity)  is a continuous action of $G$ on a compact Hausdorff space $X,$ i.e. a continuous map $\pi:G\times X\to X$ such that $\pi(e,x)=x$ for every $x\in X$ and  
$e$ 
the identity in $G,$  and $\pi(gh,x)=\pi(g,\pi(h,x))$ for every $x\in X$ and $g,h\in G.$ When the action is understood, we write $gx$ instead of $\pi(g,x)$. We call $G$ {\em extremely amenable, if every $G$-flow has a fixed point.}
A $G$-flow is called {\em minimal} if it has no non-trivial closed $G$-invariant subsets. A  continuous map $\psi:X\to Y$ between two $G$-flows is a {\em homomorphism} if $\psi(gx)=g(\psi(x))$ for every $g\in G$ and $x\in X.$ The  {\em universal minimal flow} of $G$ is the unique minimal $G$-flow that has all other minimal $G$-flows as its homomorphic images. The universal minimal flow exists for every topological group and it is unique up to isomorphism.
It is easy to see that $G$ is extremely amenable if and only if the universal minimal flow of $G$ is a singleton.

Let $\lel$ and
$\lel_<$ denote, respectively, the projective Fra\"{i}ss\'e limits of $\F$ and $\F_<$, and let $\auto$ be the automorphism group of $\lel_<$.
Then $\auto$ is a closed subgroup of $\aut$, the automorphism group of $\lel$;
 see  Section \ref{preli} for more details.
Since $\F_<$ is a rigid Ramsey class, 
 Theorem \ref{kpt} 
provides the following. 

\begin{thm}\label{eaauto}
The group $\auto$ is extremely amenable.
\end{thm}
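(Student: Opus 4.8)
The plan is to derive Theorem \ref{eaauto} as an immediate consequence of the machinery already assembled in the paper, so the ``proof'' is really a matter of checking that the hypotheses of Theorem \ref{kpt} are met for the projective Fra\"{i}ss\'e family $\F_<$ and its limit $\lel_<$. Concretely, I would apply the equivalence in Theorem \ref{kpt} with $\G=\F_<$, $\g=\lel_<$ and $G=\auto={\rm Aut}(\lel_<)$. Direction $(2)\Rightarrow(1)$ of that theorem says that if $\F_<$ is a Ramsey class and every member of $\F_<$ is rigid, then $\auto$ is extremely amenable, which is exactly the conclusion we want.

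The first thing to verify is that $\F_<$ is a Ramsey class; this is precisely the content of Theorem \ref{ordfans}, which has already been established (using Theorem \ref{Tstepup_d} and Corollary \ref{dhe}), so I would simply invoke it. The second thing to verify is rigidity: for every finite ordered fan $A\in\F_<$, the automorphism group ${\rm Aut}(A)$ is trivial. Here I would argue that any automorphism of a fan must fix the root (it is the unique point of degree greater than $2$, or is otherwise distinguished as the root in the language), hence permutes the branches; but an automorphism must preserve the linear order on the set of branches, so it fixes each branch setwise; and since each branch is a finite path emanating from the root, the only order-and-graph-preserving self-map of such a path fixing the root is the identity. Therefore the automorphism is the identity on all of $A$, so ${\rm Aut}(A)$ is trivial and $\F_<$ consists of rigid elements.

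Having checked both hypotheses, I would conclude by Theorem \ref{kpt}$(2)\Rightarrow(1)$ that $G=\auto$ is extremely amenable, i.e. every continuous action of $\auto$ on a compact Hausdorff space has a fixed point, which is the assertion of Theorem \ref{eaauto}. I would also remark, for completeness, that $\auto$ is indeed a topological group to which Theorem \ref{kpt} applies: it is a closed subgroup of $\aut$ (as noted in the excerpt and Section \ref{preli}), hence a Polish group, and the automorphism group of a projective Fra\"{i}ss\'e limit in the required sense.

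The only genuine obstacle here is the rigidity check, and it is mild: one must make sure the formal language chosen for $\F_<$ in Section \ref{Lelekcons} really does pin down the root and the order on branches tightly enough that no nontrivial automorphism survives. In particular one should confirm that the order is on the \emph{branches} (so that no branch can be mapped to another) rather than merely a global order on vertices, and that along a single branch the graph structure already forbids any nontrivial self-embedding fixing the endpoints. Once that bookkeeping is done, everything else is a direct appeal to Theorems \ref{ordfans} and \ref{kpt}.
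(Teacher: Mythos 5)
Your proposal is correct and follows the paper's own route: the paper proves Theorem \ref{eaauto} exactly by combining Theorem \ref{kpt} with Theorem \ref{ordfans}, noting (as you verify in slightly more detail) that $\F_<$ consists of rigid elements. Your rigidity argument is fine — since $R^{A}$ is a directed relation the root is fixed, and $S^{A}$ forces each branch to be fixed setwise, hence pointwise — so the extra bookkeeping you flag is the only content beyond the paper's one-line proof.
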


Let $H(L)$ denote the homeomorphism group of the Lelek fan $L$ with the compact-open topology. 
The group $\aut$ continuously embeds as a dense subgroup into $H(L)$;  see  Section \ref{preli}.
Let $H$ be the closure of $\auto$ via this embedding. 
Then Theorem \ref{eaauto} will imply Theorem \ref{ea}. 

\begin{thm}\label{ea}
The group $H$ is extremely amenable.
\end{thm}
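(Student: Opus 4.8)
The plan is to deduce Theorem \ref{ea} from Theorem \ref{eaauto} by a soft topological-dynamics argument, using only that $\auto$ is extremely amenable, that $H$ is the closure of (the image of) $\auto$ in $H(L)$ under the embedding described in Section \ref{preli}, and the elementary fact stated just above that a topological group is extremely amenable precisely when its universal minimal flow is a singleton. The key observation is that extreme amenability passes to completions, and more generally from a dense subgroup to the ambient group: if $G_0$ is a dense subgroup of a topological group $G$ and $G_0$ is extremely amenable, then $G$ is extremely amenable.

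First I would make precise the relation between $\auto$ and $H$. By the discussion in Section \ref{preli}, $\aut$ embeds continuously and densely into $H(L)$, and $\auto$ is a (closed) subgroup of $\aut$; the group $H$ is by definition the closure of the image of $\auto$ inside $H(L)$. Thus the image of $\auto$ is a dense subgroup of $H$, and the embedding $\auto \to H$ is a continuous group homomorphism with dense image. So the whole statement reduces to the density lemma: a topological group with an extremely amenable dense subgroup is itself extremely amenable.

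To prove that lemma, let $\pi: H \times X \to X$ be a continuous action of $H$ on a nonempty compact Hausdorff space $X$. Restrict $\pi$ to the dense subgroup $\auto$ (more precisely, precompose with the continuous homomorphism $\auto \to H$); this yields a continuous $\auto$-flow on $X$. By Theorem \ref{eaauto}, this flow has a fixed point $x_0 \in X$, i.e. $\pi(g, x_0) = x_0$ for every $g$ in (the image of) $\auto$. Now I claim $x_0$ is fixed by all of $H$: for arbitrary $h \in H$, pick a net $(g_\alpha)$ in $\auto$ converging to $h$ in $H$; then by joint continuity of $\pi$ we get $\pi(h, x_0) = \lim_\alpha \pi(g_\alpha, x_0) = \lim_\alpha x_0 = x_0$. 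Hence every $H$-flow has a fixed point, so $H$ is extremely amenable. (Equivalently, one can phrase this via universal minimal flows: the universal minimal flow of $H$, restricted to the dense subgroup, is a minimal flow of that subgroup by the same net argument showing orbit closures agree, hence a singleton by Theorem \ref{eaauto}, so $M(H)$ is a singleton.)

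The only genuinely nontrivial input is Theorem \ref{eaauto} itself (which rests on Theorems \ref{kpt} and \ref{ordfans}, and ultimately on the Ramsey result Theorem \ref{Tstepup_d}); the remaining step is the standard density/continuity argument above, whose sole subtlety is ensuring that the action map $\pi: H \times X \to X$ is jointly continuous so that the net computation is valid — but that is part of the definition of a $G$-flow recalled in the excerpt. So I do not expect a real obstacle here; the work is all in the earlier sections, and Theorem \ref{ea} is essentially a corollary of Theorem \ref{eaauto}.
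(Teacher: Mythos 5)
Your proposal is correct and matches the paper's argument: the paper proves Theorem \ref{ea} by noting that $H$ is the closure of a continuous homomorphic image of $\auto$ and then invoking Lemma 6.18 of \cite{KPT}, which is precisely the dense-image/density lemma you prove by hand with the net-and-continuity argument. The only difference is that you supply the short proof of that lemma instead of citing it, which is harmless.
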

In Proposition \ref{hl}, we  identify $H$  with the group $H(L_<)$ of homeomorphisms that preserve the order coming from the one on $\lel_<$.


\subsection{A generalisation of Gowers' Ramsey theorem to multiple operations}\label{resultsRamsey}

To prove theorems stated in Section \ref{dynamo}, we will need to generalize the finite version of  Gowers' Ramsey theorem (Theorem \ref{finG}).
For this we will prove Theorem \ref{Tstepup_d} and Corollary \ref{dhe}. In this section, we will state Theorem \ref{Tstepup_d}. First we will introduce the
 necessary notation.
 
Let $\mathbb{N}=\{1,2,3,\ldots\}$ denote the set of natural numbers 
{(we will follow the convention that 0 is not a natural number)} and, for the remainder of this section, fix
 $k\in\mathbb{N}$. 
 For a function 
$p:\mathbb{N}\to \{0,1,\ldots,k\}$, we define the {\em support }  $\supp(p)$ of $p$ to be the set
$\{l\in\mathbb{N}: p(l)\neq 0\}$. 
Let  \[
\fin_k=\{p:\mathbb{N}\to \{0,1,\ldots,k\}: |\supp(p)|<\infty {\rm \ and\  } (\exists l\in\mathbb{N}) \left(p(l)=k\right)\}, \] 
 and, for each $n\in\mathbb{N}$, let
\[\fin_k(n)=\{p:\mathbb{N}\to \{0,1,\ldots,k\}: \supp(p)\subset \{1,2,\ldots,n\}\}.\]

	\noindent  We equip $\fin_k$  and each $\fin_k(n)$ with a partial semigroup operation $+$ defined 
	for $p$ and $q$ whenever $\max(\supp(p))<\min( \supp(q))$ by  $(p+q)(x)= p(x)+ q(x)$. 


Gowers' Theorem (Theorem \ref{GG}, below)  involves 
a {\em tetris} operation $T: \fin_k\to \fin_{k-1}$ defined by
\[  T(p)(l)=\max\{0,p(l)-1\}.\]
We define, for every $0<i\leq k$, 
an operation $T^{(k)}_i:\fin_k\to\fin_{k-1}$ 
that behaves like the identity up to the value $i-1$ and like tetris above it  as follows.
\[
		T^{(k)}_i(p)(l) =
\begin{cases}
   p(l) &\text{if } p(l)<i \\
   p(l)-1  &\text{if } p(l)\geq i.
\end{cases}
 \]
 We also define $T^{(k)}_0=\id\restriction_{\fin_k}$. 
 It may seem more natural to denote the identity
by $T^{(k)}_{k+1}$ or $T^{(k)}_\infty$, only for notational convenience later on we will be using
 $T^{(k)}_0$.
Note that in our notation, 
$T^{(k)}_1$ is the usual Gowers tetris operation.
 When the context is clear we will usually drop superscripts and write $T_i$ rather than $T^{(k)}_i$.




A sequence  
$B=(b_s)_{s\in\mathbb{N}}$ is called a {\em block sequence} if for every 
$i\in \mathbb{N}$ 
\[
{\rm max}(\supp(b_i)) < {\rm min}(\supp(b_{i+1})).
\]
Analogously, we define a finite block sequence $B=(b_s)_{s=1}^m$ and we call $m$ the {\em length} of the sequence.
We let $\fin_k^{[d]}$ denote the set of all block sequences in $\fin_k$ of length $d$ and similarly
we define $\fin_k^{[d]}(n)$.

Let $B$ 
be a block sequence in $\fin_k$ (finite or infinite).   
Let $P_k$ denote the 
product $\prod_{j=1}^k \{0,1,\ldots,j\}.$
For any $I$ such that $(0,\ldots,0)\in I\subset P_k$ and for $\vec{i}=(i(1),\ldots,i(k))\in I,$ denote 
\[
T_{\vec{i}}=T_{i(1)}\circ\ldots \circ T_{i(k)}.
\]
Let $\left< B\right>_{I}$ denotes the partial subsemigroup of $\fin_{k}$ consisting of elements of the form
\[
\sum_{s=1}^l T_{\vec{i}_s}(b_{s}),
\]
where $l$ is a natural number, $\vec{i}_s\in I,$ $b_s\in B$, for $s=1,\ldots,l,$ and there is some $s$ such that 
all the entries of $\vec{i}_s$ are 0.

For a set $X$ and $r\in\mathbb{N}$, we will often call a function $c: X\to \{1,2,\ldots,r\}$ a {\em colouring}. We say that $A\subset X$ is {\em $c$-monochromatic},
or just {\em monochromatic}, if $c\restriction A$ is constant. 

 Let us  state Gowers' Ramsey theorem in this language.
\begin{thm}[Gowers, \cite{G}]\label{GG}
Let $c:\fin_k\to \{1,2,\ldots,r\}$ be a colouring. Then there exists an infinite  block sequence $B$ in $\fin_k$ such that $\left< B \right>_{\prod_{i=1}^k\{0,1\}}$ is $c$-monochromatic. 
\end{thm}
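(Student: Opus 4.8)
The plan is to prove the theorem directly by the method of idempotent ultrafilters, following Gowers \cite{G} (and, for $k=1$, the Galvin--Glazer proof of Hindman's theorem); compactness does \emph{not} reduce this infinite statement to its finite approximations, so a genuinely infinitary argument is needed.

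\textbf{The semigroup.} For $1\le j\le k$ let $\gamma_j$ be the non-empty closed set of all ultrafilters $\mathcal U$ on $\fin_j$ with $\{p\in\fin_j:\min(\supp(p))\ge n\}\in\mathcal U$ for every $n\in\mathbb N$. Extend $+$ to such ultrafilters by
\[
A\in\mathcal U+\mathcal V\iff\{p:\{q:p+q\in A\}\in\mathcal V\}\in\mathcal U\qquad(A\subseteq\fin_j),
\]
which is well defined because $p+q$ is defined for $\mathcal V$-almost every $q$; one checks that $\mathcal U+\mathcal V\in\gamma_j$, that $+$ is associative, and that each right translation $\mathcal U\mapsto\mathcal U+\mathcal V$ is continuous. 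Thus $(\gamma_j,+)$ is a non-empty compact right-topological semigroup, and so it contains an idempotent (every non-empty compact right-topological semigroup does). I also use the semigroup $\gamma_{\le k}$ of such ultrafilters on $\bigcup_{1\le j\le k}\fin_j$, into which every $\gamma_j$ embeds; as $\fin_i+\fin_j\subseteq\fin_{\max(i,j)}$, the top piece $\gamma_k$ is a closed two-sided ideal of $\gamma_{\le k}$.

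\textbf{The tetris operation and the key idempotent.} Since $T(p+q)=T(p)+T(q)$ and $T\colon\fin_j\to\fin_{j-1}$ is onto (given $q\in\fin_{j-1}$, the $p$ with $p(l)=q(l)+1$ on $\supp(q)$ and $p(l)=0$ otherwise has $T(p)=q$), the Stone--\v{C}ech extension $\widetilde T$ is a continuous surjective homomorphism $\gamma_j\to\gamma_{j-1}$ carrying idempotents to idempotents. The crux of the proof is to produce an idempotent $\mathcal U\in\gamma_k$ lying below each of its tetris-images in the idempotent order of $\gamma_{\le k}$, i.e.
\[
\widetilde T^{i}(\mathcal U)+\mathcal U=\mathcal U+\widetilde T^{i}(\mathcal U)=\mathcal U\qquad(1\le i\le k-1).
\]
One works down the tower $\gamma_k\xrightarrow{\widetilde T}\gamma_{k-1}\to\cdots\to\gamma_1$, starting from a minimal idempotent of $\gamma_1$; the delicate point is to arrange, by a minimal-ideal argument inside the closed subsemigroups $\widetilde T^{-1}(\mathcal V_j)\cap\gamma_{j+1}$ and inside the semigroups $\gamma_{\le j}$, that the idempotents can be chosen to form a chain $\mathcal V_k\le\mathcal V_{k-1}\le\cdots\le\mathcal V_1$ with $\widetilde T(\mathcal V_{j+1})=\mathcal V_j$; then $\mathcal U:=\mathcal V_k$ works, since $\widetilde T^{i}(\mathcal U)=\mathcal V_{k-i}\ge\mathcal U$. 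I expect this to be the main obstacle -- pinning down which idempotents to take, verifying the absorption identities, and handling the non-continuity of left translations along the way is the technical heart, exactly the ultrafilter machinery that the combinatorial proofs of Tyros \cite{T} and Ojeda-Aristizabal \cite{OA} were devised to circumvent (for the finite version).

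\textbf{The fusion.} Unwinding the definition of $+$, the displayed identities say: whenever $A\in\mathcal U$, one has $\{p:\{q:T^{i}(p)+q\in A\}\in\mathcal U\}\in\mathcal U$ and $\{p:\{q:p+T^{i}(q)\in A\}\in\mathcal U\}\in\mathcal U$ for each $i$ (the case $i=0$ being idempotency of $\mathcal U$). Fix the colour $i_0$ with $A:=c^{-1}(i_0)\in\mathcal U$ and build a block sequence $B=(b_n)_{n\in\mathbb N}$ recursively, maintaining at stage $n$ the invariant that every sum $\sigma=\sum_s T_{\vec{i}_s}(b_{n_s})$ built from $b_1,\dots,b_{n-1}$ in increasing support order with some $\vec{i}_s=\prod_{j=1}^k\{0\}$ already lies in $A$, and that for every such $\sigma$ -- indeed for every partial sum, via the appropriate power of $\widetilde T$ -- the set of admissible next blocks $x$ keeping every one-step extension $\sigma+T_{\vec{i}}(x)$ inside $A$ lies in $\mathcal U$. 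One then picks $b_n$ in the finite intersection of all these sets with $\{x:\min(\supp(x))>\max(\supp(b_{n-1}))\}$; this is possible because the intersection lies in $\mathcal U$, using the identities above and the standard fact that the star set $A^{\star}=\{p\in A:\{q:p+q\in A\}\in\mathcal U\}$ lies in $\mathcal U$, and a short induction on the number of summands preserves the invariant. The resulting $B$ then satisfies $\left< B\right>_{\prod_{i=1}^k\{0,1\}}\subseteq A$, so it is $c$-monochromatic. (Alternatively one can prove the finite approximation of the theorem by a self-contained combinatorial induction in the spirit of \cite{T} and \cite{OA} -- in effect the route taken for the generalization, Theorem~\ref{Tstepup_d} -- though the ultrafilter argument above is the shortest proof of the infinite statement.)
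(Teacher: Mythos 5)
The paper never proves Theorem \ref{GG}: it is quoted from Gowers \cite{G}, and the only in-paper argument to compare with is the proof of the generalization Theorem \ref{T1} in Section \ref{Gowersinfinite}, which is exactly the route you outline — cofinite ultrafilters on $\fin_j$, the compact right-topological semigroup structure, the extension of the tetris map to a continuous surjective homomorphism, a descending chain of idempotents compatible with tetris, and then the fusion carried out as in \cite{To}, Theorem 2.22. Your setup and your fusion sketch match this. The problem is that the one genuinely hard point — producing idempotents $\mathcal{V}_1\ge\mathcal{V}_2\ge\dots\ge\mathcal{V}_k$ with $\widetilde T(\mathcal{V}_{j+1})=\mathcal{V}_j$ — is precisely the step you leave as ``the main obstacle'', so as written this is a correct plan rather than a proof. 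Moreover, the hint you give for it is not quite sufficient: an idempotent argument inside $S_{j+1}:=\widetilde T^{-1}(\mathcal{V}_j)\cap\gamma_{j+1}$ alone cannot produce the absorption $\mathcal{V}_{j+1}\le\mathcal{V}_j$, since $\mathcal{V}_j$ is not an element of $S_{j+1}$, and the natural candidate $\mathcal{V}_j+S_{j+1}+\mathcal{V}_j$ is not obviously compact because left translations are not continuous — the very difficulty you flag.

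The paper's proof of Theorem \ref{T1} shows how to close this gap, and the same argument (with only the single map $T_1$ to track) proves Theorem \ref{GG}. One checks that $S_{j+1}$ is nonempty (lift each $p$ pointwise, raising its nonzero values by one) and closed, and that $S_{j+1}+\mathcal{V}_j$ is a closed subsemigroup; the semigroup computation uses $\widetilde T(\mathcal{V}_j)=\mathcal{V}_{j-1}$ together with the inductively maintained absorption $\mathcal{V}_j\le\mathcal{V}_{j-1}$. By Lemma \ref{Ellis} it contains an idempotent of the form $\mathcal{W}+\mathcal{V}_j$ with $\mathcal{W}\in S_{j+1}$, and one sets $\mathcal{V}_{j+1}:=\mathcal{V}_j+\mathcal{W}+\mathcal{V}_j$. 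Direct computations then give that $\mathcal{V}_{j+1}$ is idempotent, that $\mathcal{V}_{j+1}\le\mathcal{V}_j$, and that $\widetilde T(\mathcal{V}_{j+1})=\mathcal{V}_{j-1}+\mathcal{V}_j+\mathcal{V}_{j-1}=\mathcal{V}_j$; at the base, $\mathcal{V}_0$ is the principal ultrafilter at the zero function, so the absorption there is trivial (and no minimality of $\mathcal{V}_1$ is actually needed). With this lemma inserted, your chain $\mathcal{U}:=\mathcal{V}_k$ satisfies the identities $\widetilde T^{i}(\mathcal{U})+\mathcal{U}=\mathcal{U}+\widetilde T^{i}(\mathcal{U})=\mathcal{U}$, and the star-set fusion you describe is the standard one (it is exactly the step the paper itself delegates to \cite{To}), so the rest of the argument goes through.
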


The finite version of Gowers' theorem (Theorem \ref{finG}) 
can be deduced by a simple compactness argument. 

\begin{thm}\label{finG}
Let $k,m,r$ be natural numbers. Then there exists $n$ such that for every colouring 
$c:\fin_k(n)\to \{1,2,\ldots,r\}$  there is a  block sequence $B$ of length $m$ in $\fin_k(n)$ such that $\left< B \right>_{\prod_{i=1}^k\{0,1\}}$ is $c$-monochromatic.
\end{thm}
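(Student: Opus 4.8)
The plan is a routine compactness argument deducing Theorem~\ref{finG} from the infinite Gowers theorem, Theorem~\ref{GG}, via K\"onig's lemma (equivalently, nonemptiness of an inverse limit of nonempty finite sets).

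Fix $k,m,r$ and suppose toward a contradiction that no such $n$ exists. Then for every $n\in\mathbb{N}$ there is a colouring $c_n\colon\fin_k(n)\to\{1,\dots,r\}$ for which $\left<B\right>_{\prod_{i=1}^k\{0,1\}}$ is not $c_n$-monochromatic for any length-$m$ block sequence $B$ with supports inside $\{1,\dots,n\}$; call such a colouring \emph{bad at level $n$}. The first point I would record is a monotonicity observation: if $n<n'$ and $c'$ is bad at level $n'$, then $c'\restriction\fin_k(n)$ is bad at level $n$. Indeed, a length-$m$ block sequence $B$ with supports in $\{1,\dots,n\}$ also lies in $\fin_k^{[m]}(n')$, and since each operation $T_{\vec i}$ neither enlarges supports nor raises values, $\left<B\right>_{\prod\{0,1\}}\subseteq\fin_k(n)\subseteq\fin_k(n')$; hence $c'$-monochromaticity of $\left<B\right>_{\prod\{0,1\}}$ would descend to $c'\restriction\fin_k(n)$, contradicting badness of $c'$.

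Now, each $\fin_k(n)$ is finite, so the set $X_n$ of colourings bad at level $n$ is finite and, by assumption, nonempty, and the previous paragraph shows that restriction maps $X_{n'}$ into $X_n$ for $n<n'$. By K\"onig's lemma there is a coherent sequence $(c_n)_n$ with $c_n\in X_n$ and $c_{n+1}\restriction\fin_k(n)=c_n$, whose union is a single colouring $c$ of $\bigcup_n\fin_k(n)$; since every element of $\fin_k$ has finite support, $\fin_k\subseteq\bigcup_n\fin_k(n)$, so $c$ restricts to a colouring $\fin_k\to\{1,\dots,r\}$. Apply Theorem~\ref{GG} to this restriction to get an infinite block sequence $B=(b_s)_{s\in\mathbb{N}}$ in $\fin_k$ with $\left<B\right>_{\prod_{i=1}^k\{0,1\}}$ $c$-monochromatic. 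Let $B'=(b_1,\dots,b_m)$ and set $n=\max(\supp(b_m))$. Then $B'$ is a length-$m$ block sequence in $\fin_k(n)$, and because $B'$ is an initial segment of $B$ we have $\left<B'\right>_{\prod\{0,1\}}\subseteq\left<B\right>_{\prod\{0,1\}}\cap\fin_k(n)$, so this set is $c$-monochromatic and therefore $c_n$-monochromatic, since $c\restriction\fin_k(n)=c_n$. This contradicts the badness of $c_n$, finishing the argument.

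There is no substantial obstacle; the only steps needing care are the monotonicity claim (so that the $X_n$ genuinely form an inverse system of finite nonempty sets and K\"onig's lemma applies) and the elementary bookkeeping that truncating the infinite block sequence to its first $m$ terms yields a configuration lying entirely in $\fin_k(n)$ with $n=\max(\supp(b_m))$, whose generated subsemigroup is contained in that of the full sequence.
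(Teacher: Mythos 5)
Your argument is correct and is exactly the ``simple compactness argument'' the paper invokes without writing out: deduce the finite statement from Theorem~\ref{GG} by gluing level-wise bad colourings via K\"onig's lemma and truncating the resulting infinite monochromatic block sequence. The monotonicity of badness under restriction and the containment $\left<B'\right>_{\prod_{i=1}^k\{0,1\}}\subseteq\left<B\right>_{\prod_{i=1}^k\{0,1\}}\cap\fin_k(n)$ are the only points needing care, and you handle both correctly.
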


As a consequence of our main Ramsey result, Theorem \ref{Tstepup_d},
we will obtain the following generalisation of the finite Gowers theorem to all $T_i$'s.

\begin{cor}\label{Gowers}
Let $k,m,r$ be natural numbers. Then there exists a natural number $n$ such that for every colouring $c:\fin_k(n)\to \{1,2,\ldots,r\}$  there is a block sequence $B$ of length $m$ in $\fin_k(n)$ such that $\left< B \right>_{P_k}$ is $c$-monochromatic.
\end{cor}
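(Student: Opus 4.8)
The plan is to obtain Corollary~\ref{Gowers} from the infinite Ramsey statement contained in Theorem~\ref{Tstepup_d} by a compactness (K\"onig's lemma) argument, in exact analogy with the deduction of Theorem~\ref{finG} from Theorem~\ref{GG}. The input I would use is the case $d=1$ of Theorem~\ref{Tstepup_d}, namely: for every colouring $c\colon\fin_k\to\{1,\ldots,r\}$ there is an infinite block sequence $B$ in $\fin_k$ with $\left\langle B\right\rangle_{P_k}$ $c$-monochromatic. Everything beyond this is bookkeeping.

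First I would record two elementary closure facts. (1) If $B=(b_s)_{s\in\mathbb{N}}$ is an infinite block sequence and $B'=(b_1,\ldots,b_m)$ is its length-$m$ truncation, then $\left\langle B'\right\rangle_{P_k}\subseteq\left\langle B\right\rangle_{P_k}$, and moreover $\left\langle B'\right\rangle_{P_k}\subseteq\fin_k(N)$ where $N=\max(\supp(b_m))$; this is immediate from the definition of $\left\langle\cdot\right\rangle_{P_k}$, once one notes that no $T_{\vec{i}}$ enlarges supports, so the partial sums defining $\left\langle B'\right\rangle_{P_k}$ are legitimate and have support contained in $\{1,\ldots,N\}$. (2) Call a colouring $c\colon\fin_k(n)\to\{1,\ldots,r\}$ \emph{$(m,n)$-bad} if no length-$m$ block sequence in $\fin_k(n)$ has $c$-monochromatic $P_k$-span; then the restriction of an $(m,n+1)$-bad colouring to $\fin_k(n)$ is again $(m,n)$-bad, because a length-$m$ block sequence in $\fin_k(n)$ is also one in $\fin_k(n+1)$ and has the same $P_k$-span.

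Then I would argue by contradiction. If Corollary~\ref{Gowers} failed, then for every $n$ the set $\mathcal{B}_n$ of $(m,n)$-bad colourings of the finite set $\fin_k(n)$ would be nonempty; by (2), restriction maps $\mathcal{B}_{n+1}$ into $\mathcal{B}_n$, so $(\mathcal{B}_n,\restriction)_{n}$ is an inverse system of nonempty finite sets and therefore has a nonempty inverse limit. Fixing a thread gives a colouring $\tilde c\colon\bigcup_n\fin_k(n)\to\{1,\ldots,r\}$ with $\tilde c\restriction\fin_k(n)$ $(m,n)$-bad for every $n$. Since every element of a $P_k$-span attains the value $k$ (one of its summands is $T_{\vec{0}}(b_s)=b_s\in\fin_k$), we always have $\left\langle B\right\rangle_{P_k}\subseteq\fin_k$; hence the case $d=1$ of Theorem~\ref{Tstepup_d}, applied to $\tilde c\restriction\fin_k$, yields an infinite block sequence $B=(b_s)_s$ with $\left\langle B\right\rangle_{P_k}$ $\tilde c$-monochromatic. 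Setting $N=\max(\supp(b_m))$ and $B'=(b_1,\ldots,b_m)$, fact (1) makes $B'$ a length-$m$ block sequence in $\fin_k(N)$ whose $P_k$-span is $\tilde c\restriction\fin_k(N)$-monochromatic, contradicting $(m,N)$-badness; so some $n$ works.

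I do not expect a genuine obstacle in this deduction: all the real content sits in Theorem~\ref{Tstepup_d}, and this is the standard compactness passage from the infinite to the finite form. The only points needing a moment's care are closure fact (2) --- it is what turns the $(m,n)$-bad colourings into an honest inverse system --- and the verification in fact (1) that truncating a block sequence can only shrink its $P_k$-span, so that a span monochromatic for the full sequence stays monochromatic after truncation.
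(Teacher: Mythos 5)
There is a genuine gap, and it lies not in your compactness machinery but in the input you feed it. You describe ``the case $d=1$ of Theorem~\ref{Tstepup_d}'' as the statement that for every colouring $c\colon\fin_k\to\{1,\ldots,r\}$ of the whole (infinite) partial semigroup there is an \emph{infinite} block sequence $B$ with $\left\langle B\right\rangle_{P_k}$ monochromatic. Theorem~\ref{Tstepup_d} contains no such infinite statement: it is already a finitary theorem (``there exists $n$ such that for every colouring $c\colon\fin_k^{[d]}(n)\to\{1,\ldots,r\}$ there is a block sequence $B$ in $\fin_l(n)$ of length $m$\ldots''). The infinite statement you invoke --- Gowers' theorem upgraded from the single tetris operation to the full family $P_k$ of operations $T_{\vec i}$ --- is precisely what the paper does \emph{not} prove and does not know: Section~2.2 says explicitly that the full generalization to all operations is not known in the infinite case (Theorem~\ref{T1} only handles ``neighbouring operations'' $I_j=\{0,l_j,l_j+1\}$), and the paper closes with the question of whether Corollary~\ref{Gowers} admits an infinitary version. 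So your argument derives the corollary from a strictly stronger, open statement; the known infinite results (Theorem~\ref{GG}, Theorem~\ref{T1}) are not enough to run your K\"onig's-lemma step, because they do not give monochromaticity of the full $P_k$-span.

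The intended deduction is much shorter and needs no compactness at all: specialize the finite Theorem~\ref{Tstepup_d} to $d=1$ and $l=k$. Then $\fin_k^{[1]}(n)$ is just $\fin_k(n)$, $P_{k+1}^{k}$ consists only of the zero vector, so $T_{\vec 0}=\id$ and $\left\langle\bigcup_{\vec i\in P_{k+1}^{k}}T_{\vec i}(B)\right\rangle_{P_k}=\left\langle B\right\rangle_{P_k}$, and the conclusion of Theorem~\ref{Tstepup_d} is verbatim the conclusion of Corollary~\ref{Gowers}. Your two closure facts and the inverse-limit argument are correct in themselves (they are the standard passage used to get Theorem~\ref{finG} from Theorem~\ref{GG}), but here they solve the wrong problem: the finite-to-finite specialization is what is available, and the infinite-to-finite compactness route has no valid starting point in this paper.
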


In order to state Theorem \ref{Tstepup_d} in full generality, we need 
a few more pieces of notation. From the proof of  Theorem \ref{ordfans} it will be clear why this is the theorem we need.
For $l> k,$ let  $P_{k+1}^l=\prod_{j=k+1}^{l} \{1,2,\ldots,j\},$ 
and let $P_{k+1}^k$ 
 contain only the constant sequence $(0,\ldots,0)$.
Note that if $p\in\fin_l$ and $\vec{i}\in P^l_{k+1}$, then 
$T_{\vec{i}}(p)\in\fin_k$.

Let $l\geq k$ and let $B=(b_s)_{s=1}^m$ be a block sequence in $\fin_l$.
Let $T_{\vec{i}}(B)$ denote the block sequence
$(T_{\vec{i_s}}(b_s))_{s=1}^m$, for $\vec{i_s}\in P^l_{k+1}$, $s=1,\ldots,m$. 
Let $\left<\bigcup_{\vec{i}\in P_{k+1}^{l}} T_{\vec{i}}(B)\right>_{P_{k}}$ denote the partial subsemigroup of 
$\fin_{k}$ consisting of elements of the form
\[
\sum_{s=1}^m    T_{\vec{t}_s}    \circ T_{\vec{i}_s}(b_{s}),
\]
where $\vec{i}_1,\ldots,\vec{i}_m\in P_{k+1}^{l}$,  $\vec{t}_1,\ldots,\vec{t}_m\in P_k$,
and there is an $s$ such that all entries of $\vec{t}_s$ are 0.  
Let $\left<\bigcup_{\vec{i}\in P_{k+1}^{l}} T_{\vec{i}}(B)\right>_{P_{k}}^{[d]}$ be the set of all block sequences in  
$\left<\bigcup_{\vec{i}\in P_{k+1}^{l}} T_{\vec{i}}(B)\right>_{P_{k}}$ of length $d$.


\begin{thm}\label{Tstepup_d}
Let $k\geq 1$. Then for every $d,$ every $m\geq d,$ every $ l\geq k,$ and every $ r$,  there exists a natural number $n$ such that for every colouring $c:\fin^{[d]}_k(n)\to \{1,2,\ldots,r\},$ there is a block  sequence $B$ in $\fin_{l}(n)$ of length $m$ such that the partial semigroup 
$\left< \bigcup_{\vec{i}\in P_{k+1}^l}T_{\vec{i}} (B)\right>_{P_k}^{[d]}$ is $c$-monochromatic. Denote the smallest such $n$ by $G_d(k,l,m,r).$
\end{thm}

Notice that setting $k=l$  and $d=1$ in Theorem \ref{Tstepup_d}, 
and observing that $\left<\bigcup_{\vec{i}\in P_{k+1}^{k}} T_{\vec{i}}(B)\right>_{P_{k}}=\left< B \right>_{P_k}$, 
we obtain 
Corollary \ref{Gowers}.

 We prove 
Theorem \ref{Tstepup_d} in Section  \ref{mainRamsey} and use it to derive Theorem \ref{ea} in Section~\ref{dynamics}.

To motivate here the statement of Theorem \ref{Tstepup_d}, let us see how to an epimorphism between structures in $\F_<$
we can associate an element in $\fin_k^{[d]}$.
To each $f\in {C\choose A}$, we associate  $f^*=(p^f_i )_{i=1}^d\in \fin_k^{[d]}(n)$ such that 
\[
		\supp(p^f_i)=\{j:a^1_i\in f(c_j)\}
	\]
		and for $j\in \supp(p_i^f)$
\[
		p_i^f(j)=z 
		\ \iff \ f(c_j^N)=a_i^z,
\]
where $a_1,\ldots, a_d$ and $c_1,\ldots, c_n$ are the increasing enumerations of branches in $A$ and $C$, respectively.
As $f\to f^*$ is not injective, to prove Theorem \ref{ordfans}, we will need not only Theorem \ref{Tstepup_d}, but also another 
Ramsey theoretic statement -- Corollary \ref{dhe}. 

In the proof of Theorem \ref{Tstepup_d}, we generalize methods introduced by  Tyros \cite{T}, who recently gave a direct constructive proof of the finite version of  Gowers' Ramsey theorem, providing upper bounds on $n$.
Independently of Tyros, a proof of the finite version of  Gowers' theorem was   presented by Ojeda-Aristizabal \cite{OA}.
On the other hand, the only known proof of the infinite Gowers Ramsey theorem \cite{G} uses the Galvin-Glazer method of idempotents in a compact right-topological semigroup of ultrafilters.
During the time this paper was under revision,  Lupini \cite{Lu}  proved the infinite version of Corollary \ref{Gowers}.

\section{Preliminaries}\label{preli}

\subsection{A construction of the Lelek fan}\label{Lelekcons} 

For completeness, we include the construction of the Lelek fan from  \cite{BK}, and we refer the reader to that article for any details we omit here.

Given a first-order language $\mathcal{L}$ that consists of relation symbols $r_i$  with arity $m_i$,  $i\in I$, and function symbols $f_j$, 
  with arity $n_j$, $j\in J$,
a \emph{topological $\mathcal{L}$-structure} is a compact zero-dimensional second-countable space $A$ equipped with
closed relations $r_i^A\subset A^{m_i}$ and continuous functions $f_j^A: A^{n_j}\to A$, $i\in I, j\in J$.
A~continuous surjection $\phi: B\to A$ between two topological $\mathcal{L}$-structures is an
 \emph{epimorphism} if it preserves the structure, that is, for a function symbol $f$ in $\mathcal{L}$ of arity $n$ and $x_1,\ldots,x_n\in B$ we require:
\[
 f^A(\phi(x_1),\ldots,\phi(x_n))=\phi(f^B(x_1,\ldots,x_n));
\]
and for a relation symbol $r$ in $\mathcal{L}$ of arity $m$ and $x_1,\ldots,x_m\in A$  we require:
\begin{equation*}
\begin{split}
&  r^A(x_1,\ldots,x_m) \\ 
&\iff \exists y_1,\ldots,y_m\in B\left(\phi(y_1)=x_1,\ldots,\phi(y_m)=x_m, \mbox{ and } r^B(y_1,\ldots,y_m)\right).
\end{split}
\end{equation*}
The if and only if condition in preservation of relations by epimorphism allows us to obtain connected spaces as natural quotients
of inverse limits of (finite) topological structures. 

By an \emph{isomorphism}  we mean a bijective epimorphism.

Let $\mathcal{G}$ be a countable family of finite topological $\mathcal{L}$-structures. We say that $\mathcal{G}$ is a \emph{ projective Fra\"{i}ss\'{e} family}
if the following two conditions hold:

(JPP) (the joint projection property) for any $A,B\in\mathcal{G}$ there are $C\in \mathcal{G}$ and epimorphisms from $C$ onto $A$ and from $C$ onto $B$;

(AP) (the amalgamation property) for $A,B_1,B_2\in\mathcal{G}$ and any epimorphisms $\phi_1: B_1\to A$ and $\phi_2: B_2\to A$, there exists $C\in\mathcal{G}$ with epimorphisms
 $\phi_3: C\to B_1$ and $\phi_4: C\to B_2$ such that $\phi_1\circ \phi_3=\phi_2\circ \phi_4$.

A topological $\mathcal{L}$-structure $\mathbb{G}$ is a \emph{ projective Fra\"{i}ss\'{e} limit } of  a projective Fra\"{i}ss\'{e} family $\mathcal{G}$ if the following three conditions hold:

(L1) (the projective universality) for any $A\in\mathcal{G}$ there is an epimorphism from $\mathbb{G}$ onto $A$;

(L2) for any finite discrete topological space $X$ and any continuous function
 $f: \mathbb{G} \to X$ there are $A\in\mathcal{G}$, an epimorphism $\phi: \mathbb{G}\to A$, and a function
$f_0: A\to X$ such that $f = f_0\circ \phi$;

(L3) (the projective ultrahomogeneity) for any $A\in \mathcal{G}$ and any epimorphisms $\phi_1: \mathbb{G}\to A$ 
and $\phi_2: \mathbb{G}\to A$
there exists an isomorphism $\psi: \mathbb{G}\to \mathbb{G}$ such that $\phi_2=\phi_1\circ \psi$.

\begin{rem}\label{coveri}
{\rm It follows from (L2) above that if  $\mathbb{G}$ is the projective Fra\"{i}ss\'{e} limit of $\mathcal{G}$, then every finite open cover can be {\em refined by an epimorphism, i.e. for every open cover
 $\mathcal{U}$}
of $\mathbb{G}$  
there is an epimorphism  $\phi:\mathbb{G}\to A$,  for some  
  $A\in\mathcal{G}$, such that for every $a\in A$, $\phi^{-1}(a)$ is contained in an open set in $\mathcal{U}$.  }
\end{rem}

\begin{thm}[Irwin-Solecki, \cite{IS}]\label{is}
 Let $\mathcal{G}$ be a projective Fra\"{i}ss\'{e} family of finite topological $\mathcal{L}$-structures. Then:
\begin{enumerate}
 \item there exists a projective Fra\"{i}ss\'{e} limit of $\mathcal{G}$;
\item any two projective Fra\"{i}ss\'{e} limits of $\mathcal{G}$ are isomorphic.
\end{enumerate}
\end{thm}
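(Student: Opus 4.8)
The plan is to dualize the standard proof of Fra\"{i}ss\'{e}'s theorem, following Irwin and Solecki \cite{IS}.

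\emph{Existence.} First I would build a \emph{generic inverse sequence}: members $A_0 \xleftarrow{p_0} A_1 \xleftarrow{p_1} A_2 \xleftarrow{p_2} \cdots$ of $\mathcal{G}$ linked by epimorphisms, with two features — (i) every $A \in \mathcal{G}$ admits an epimorphism from some $A_n$; and (ii) for every $n$, every $B \in \mathcal{G}$, and every epimorphism $f\colon B \to A_n$, there are $m > n$ and an epimorphism $g\colon A_m \to B$ such that $f \circ g$ equals the composite bonding map $A_m \to A_n$. Since $\mathcal{G}$ is countable there are only countably many such requirements, so one enumerates them and meets them one at a time: a requirement of type (i) is handled by JPP applied to the current last structure and $A$, and a requirement of type (ii) is handled by AP, amalgamating the current last structure $A_m$ with $B$ over $A_n$ and taking the resulting $C$ as the next term $A_{m+1}$. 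Then I would set $\mathbb{G} = \varprojlim (A_n,p_n)$, equip it with the topological $\mathcal{L}$-structure whose relations are the inverse limits of the relations $r^{A_n}$ and whose functions act coordinatewise, and let $\pi_n\colon \mathbb{G} \to A_n$ be the canonical projections. One checks that $\mathbb{G}$ is a compact, metrizable, zero-dimensional space and that each $\pi_n$ is genuinely an epimorphism: surjectivity is the usual fact about inverse limits of nonempty compact spaces with surjective bonding maps, preservation of functions is immediate, and the existential clause in the definition of epimorphism (lifting of relations) follows from a compactness argument, using that both $\mathbb{G}$ and the relations on it are inverse limits.

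\emph{The three limit conditions.} (L1) is immediate: given $A \in \mathcal{G}$, feature (i) supplies an epimorphism $\psi\colon A_n \to A$, and $\psi \circ \pi_n$ is the required epimorphism. (L2) is a finiteness observation: if $f\colon \mathbb{G} \to X$ is continuous and $X$ is finite and discrete, then each fibre $f^{-1}(x)$ is clopen; since the preimages of points under the $\pi_n$ form a clopen basis of $\mathbb{G}$ and the indices are linearly ordered, $f$ factors as $f_0 \circ \pi_n$ for some $n$. The heart of the matter is (L3). Given epimorphisms $\phi_1,\phi_2\colon \mathbb{G} \to A$, use (L2) to factor both through a common level, $\phi_j = \psi_j \circ \pi_{n_0}$ with $\psi_j\colon A_{n_0} \to A$ epimorphisms (take the larger of the two levels and compose with a bonding map). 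Now I would run a back-and-forth argument powered by feature (ii): one constructs a commuting infinite ladder between two cofinal subsequences of $(A_n,p_n)$, alternately pushing an epimorphism from one tower into a level of the other and back, each such step being precisely an instance of requirement (ii) already arranged in the construction, with the first rung made compatible with $\psi_1$ and $\psi_2$ via one more application of AP amalgamating the two factorizations over $A$. Passing to inverse limits, the ladder induces an automorphism $\psi$ of $\mathbb{G}$, together with its inverse, such that $\phi_1 \circ \psi = \phi_2$.

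\emph{Uniqueness, and the main obstacle.} If $\mathbb{G}$ and $\mathbb{G}'$ both satisfy (L1)--(L3), one proves them isomorphic by the same style of back-and-forth: use (L1) and (L2) to obtain epimorphisms onto a cofinal family of finite structures on each side, use (L3) to correct these epimorphisms so that successive triangles commute, and take the limit of the resulting zig-zag to get an isomorphism $\mathbb{G} \to \mathbb{G}'$. I expect the main obstacle throughout to be the bookkeeping in the back-and-forth for (L3) and for uniqueness — keeping the two interleaved towers cofinal while preserving commutativity of every triangle over the relevant finite structure; the remaining verifications are formal, apart from the compactness argument showing that the projections $\pi_n$ respect the relations, which deserves care.
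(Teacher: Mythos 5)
The paper itself gives no proof of this theorem—it is quoted from Irwin and Solecki \cite{IS}—and your outline reproduces the same dualized Fra\"{i}ss\'{e} argument used there: a generic inverse sequence arranged via the JPP and the AP, the inverse limit with relations taken as limits of the finite relations (with the compactness argument for the lifting clause in the projections), verification of (L1)--(L3), and a back-and-forth for projective ultrahomogeneity and for uniqueness. The sketch is correct in approach and the points you flag (the bookkeeping in the back-and-forth and the compactness argument for the relations) are indeed the only places requiring care.
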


Let $\mathcal{G}$ be a projective Fra\"{i}ss\'{e} family of topological $\mathcal{L}$-structures and let $\mathbb{G}$ be a topological $\mathcal{L}$-structure.
 We say that $\mathbb{G}$
has the {\em extension property} (with respect to $\mathcal{G}$) if
for every $A,B\in\mathcal{G}$ and epimorphisms $\phi_1: B\to A$ and $\phi_2: \mathbb{G}\to A$, there is  an epimorphism 
$\psi: \mathbb{G}\to B$ such that $\phi_2=\phi_1\circ \psi$.

Similarly as for  the (injective) Fra\"{i}ss\'{e} theory, one can show the following.
\begin{prop}\label{12ep}
Let $\mathcal{G}$ be a projective Fra\"{i}ss\'{e} family.  
 If a topological $\mathcal{L}$-structure $\mathbb{G}$ satisfies 
 properties (L1) and  (L2),  and  it has the extension property with respect to $\mathcal{G}$, then  $\mathbb{G}$  is the projective Fra\"{i}ss\'{e}  limit of $\mathcal{G}$.
\end{prop}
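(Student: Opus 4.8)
The plan is the standard back-and-forth argument, dualized to the projective setting. Since $\mathbb{G}$ is assumed to satisfy (L1) and (L2), it will suffice to verify the projective ultrahomogeneity (L3). So fix $A\in\mathcal{G}$ and epimorphisms $\phi_1,\phi_2\colon\mathbb{G}\to A$; I want to produce an isomorphism $\psi\colon\mathbb{G}\to\mathbb{G}$ with $\phi_1\circ\psi=\phi_2$. The strategy is to present $\mathbb{G}$ as an inverse limit of a single tower $(A_n,\pi_n)$ in $\mathcal{G}$ in two compatible ways -- one presentation refining $\phi_2$, the other refining $\phi_1$ -- and to let $\psi$ be the isomorphism obtained by comparing the two presentations.

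Using that $\mathbb{G}$ is compact, metrizable and zero-dimensional, I first fix a cofinal sequence $(\mathcal{U}_n)_n$ of finite clopen covers of $\mathbb{G}$ with mesh tending to $0$. I then recursively construct $A_n\in\mathcal{G}$ with $A_0=A$, bonding epimorphisms $\pi_n\colon A_{n+1}\to A_n$, and epimorphisms $f_n,g_n\colon\mathbb{G}\to A_n$ with $f_0=\phi_2$, $g_0=\phi_1$, $\pi_n\circ f_{n+1}=f_n$ and $\pi_n\circ g_{n+1}=g_n$, arranged so that every $\mathcal{U}_k$ is refined (in the sense of Remark \ref{coveri}) by some $f_n$ and by some $g_n$. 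The recursion alternates between the two sides. At a ``$g$-step'' I pick, using (L2) as in Remark \ref{coveri}, an epimorphism $h\colon\mathbb{G}\to B$ with $B\in\mathcal{G}$ refining the cover currently to be handled, and then apply (L2) to the map $\langle g_n,h\rangle\colon\mathbb{G}\to A_n\times B$ (into the finite discrete set $A_n\times B$) to obtain $A_{n+1}\in\mathcal{G}$, an epimorphism $g_{n+1}\colon\mathbb{G}\to A_{n+1}$, and a function $u\colon A_{n+1}\to A_n\times B$ with $\langle g_n,h\rangle=u\circ g_{n+1}$. One checks that $\pi_n:=\mathrm{pr}_1\circ u$ and $\mathrm{pr}_2\circ u$ are epimorphisms; then $\pi_n\circ g_{n+1}=g_n$, and $g_{n+1}$ refines the cover because $h$ factors through it. Finally the extension property, applied to the epimorphisms $\pi_n\colon A_{n+1}\to A_n$ and $f_n\colon\mathbb{G}\to A_n$, provides an epimorphism $f_{n+1}\colon\mathbb{G}\to A_{n+1}$ with $\pi_n\circ f_{n+1}=f_n$. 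The ``$f$-step'' is symmetric, with the roles of $f$ and $g$ exchanged.

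When the recursion is complete, the compatible family $(f_n)$ induces a continuous map $\mathbb{G}\to\varprojlim(A_n,\pi_n)$ which is onto (all the maps involved are surjective and $\mathbb{G}$ is compact), one-to-one (the $f_n$ refine arbitrarily fine covers and hence separate points), and -- after a routine check of the relational clause -- an isomorphism of topological $\mathcal{L}$-structures; likewise for $(g_n)$. Composing the isomorphism coming from $(f_n)$ with the inverse of the one coming from $(g_n)$ gives an isomorphism $\psi\colon\mathbb{G}\to\mathbb{G}$ satisfying $g_n\circ\psi=f_n$ for all $n$; for $n=0$ this reads $\phi_1\circ\psi=\phi_2$, which establishes (L3).

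The back-and-forth bookkeeping is routine. The point I expect to require genuine care -- and the one place where this departs from the classical injective Fra\"{i}ss\'{e} argument -- is checking that the maps produced by (L2) are honest epimorphisms: surjectivity and preservation of the function symbols fall out of the factorizations immediately, but one must verify the ``$\Longleftrightarrow$'' clause in the definition of epimorphism (a relation holding in the quotient exactly when it is witnessed by some preimage tuple), both for $\pi_n=\mathrm{pr}_1\circ u$ at each recursion step and for the final identifications $\mathbb{G}\cong\varprojlim(A_n,\pi_n)$.
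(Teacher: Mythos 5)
Your argument is correct: the paper itself gives no proof of Proposition \ref{12ep}, merely asserting it ``similarly as for the (injective) Fra\"{i}ss\'{e} theory,'' and your dual back-and-forth --- building one tower $(A_n,\pi_n)$ carrying two compatible sequences of epimorphisms via (L2) and the extension property, identifying $\mathbb{G}$ with $\varprojlim(A_n,\pi_n)$ in two ways, and composing to get (L3) --- is exactly the standard argument the authors are invoking. The points you flag as needing care (that the factor maps $\pi_n$ are genuine epimorphisms, and the relational ``iff'' clause for the limit identification, using closedness of the relations and shrinking fibers) are indeed the only non-routine checks, and they go through as you indicate.
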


Below we describe the projective Fra\"{i}ss\'{e} family $\F$ that we used to construct the Lelek fan in \cite{BK}.

Recall that by a 
 {\em fan} we mean an  undirected connected simple graph with all loops, with no cycles of the length greater than one,
with a distinguished point $r$, called the {\em root}, such that all elements other than $r$ have degree at most 2. On a  fan $T,$ there is a natural partial tree order
$\preceq_T$: for $t,s\in T$ we let $s\preceq_T t$ if and only if $s$ belongs to the path connecting $t$ and the root.
We say that $t$ is a {\em successor} of $s$ if $s\preceq_T t$ and $s\neq t$.
It is an {\em immediate successor} if additionally there is no $p\in T$, $p\neq s,t$  with $s\preceq_T p\preceq_T t$.

A {\em chain} in a fan $T$ is a subset of $T$ on which the order $\preceq_T$ is linear.
A {\em branch } of a fan $T$ is a maximal chain in  $(T,\preceq_T)$. 
If $b$ is a branch in $T$, we will sometimes write $b=(b^0,\ldots,b^n)$, where $b^0$ is the root of $T$, and
$b^i$ is an immediate successor of $b^{i-1}$, for every $i=1, 2, \ldots, n$. In that case, $n $ will be called the {\em height} of the branch $b$.

 Let $\mathcal{L}=\{R\}$ be the language with $R$ a binary relation symbol. 
For $s,t\in T$ we let $R^T(s,t)$ if and only if $s=t$ or
 $t$ is an immediate successor of $s$.
 Let $\F$ be the family of all finite  fans with all branches of the same height, viewed as topological $\mathcal{L}$-structures, equipped with the discrete topology.
 Every fan in $\F$ is specified by the height of its branches and its width, that is, the number of its branches.

\begin{rem}
{\rm For two fans $(S,R^S)$ and $(T,R^T)$ in $\F$, a function $\phi: (S,R^S)\to (T,R^T)$ is an epimorphism if and only if it is a surjective homomorphism, i.e. for every $s_1,s_2\in S$, $R^S(s_1,s_2)$ implies $R^T(\phi(s_1),\phi(s_2))$.
 }
\end{rem}

We list a few relevant results obtained in \cite{BK}.

\begin{prop}
The family $\F$ is a projective Fra\"{i}ss\'{e} family.
 \end{prop}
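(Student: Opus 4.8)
The plan is to verify the two defining conditions (JPP) and (AP) directly, exploiting the fact that every fan in $\F$ is determined up to isomorphism by just two parameters: the common height $h$ of its branches and its width $w$ (the number of branches). I will first record the elementary observation (following the Remark above) that between fans in $\F$ an epimorphism is exactly a surjective graph homomorphism preserving the root and the immediate-successor relation, and in particular such a map sends each branch of the domain onto a branch of the range and is level-preserving. This means an epimorphism can only exist from a fan of height $h'$ onto a fan of height $h$ when $h=h'$ (levels must match), while the width may drop arbitrarily: given widths $w'\ge w$ and equal heights, one builds an epimorphism by choosing any surjection from the $w'$ branches onto the $w$ branches and mapping the $i$-th vertex of a domain branch to the $i$-th vertex of the target branch.

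For (JPP): given $A$ of height $h_A$, width $w_A$ and $B$ of height $h_B$, width $w_B$, I first note we may need to make the heights agree. Here I would use that $\F$ contains, for each fan, fans of larger height mapping onto it — concretely, a fan of height $h+1$ and the same width maps onto a fan of height $h$ by collapsing the top two levels (sending level $h+1$ down to level $h$, which preserves $R$ since $R$ allows equality and immediate succession). So replace $A,B$ by fans $A',B'$ of a common height $h=\max(h_A,h_B)$ with the same widths $w_A,w_B$, admitting epimorphisms onto $A,B$; then take $C$ to be the fan of height $h$ and width $w_A+w_B$ (or any width $\ge\max(w_A,w_B)$), which maps onto $A'$ and $B'$ by the width-collapsing maps described above, and compose.

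For (AP): suppose $\phi_1\colon B_1\to A$ and $\phi_2\colon B_2\to A$ are epimorphisms in $\F$. By the level-preservation remark, $B_1,B_2,A$ all have the same height $h$; let $w_1,w_2,w_0$ be the widths. The natural candidate is the fibre product: let $C$ consist of the height-$h$ fan whose branches are indexed by pairs $(b,b')$ of branches $b$ of $B_1$, $b'$ of $B_2$ with $\phi_1(b)=\phi_2(b')$ (as branches of $A$), with $\phi_3,\phi_4$ the coordinate projections; on vertices one sends the $i$-th vertex of branch $(b,b')$ to the $i$-th vertex of $b$ (resp.\ $b'$). One checks $\phi_3,\phi_4$ are surjective homomorphisms — surjectivity of $\phi_3$ uses that $\phi_2$ is onto, so every branch of $B_1$ pairs with at least one branch of $B_2$ over the same branch of $A$ — and that $\phi_1\circ\phi_3=\phi_2\circ\phi_4$ by construction. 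The one subtlety is that $C$ as described must genuinely be a fan in $\F$: it is a disjoint union of chains of length $h$ glued at a common root, hence an acyclic connected graph with all non-root vertices of degree $\le 2$ and all branches of equal height, so it does lie in $\F$.

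The main obstacle I anticipate is purely bookkeeping rather than conceptual: one must be careful that the structure of a ``fan'' in this paper includes loops at every vertex and that $R$ is reflexive ($R^T(s,t)$ holds when $s=t$), so the verification that the collapsing maps and the fibre-product projections preserve $R$ in both directions (recall epimorphism for a relation symbol is a two-sided condition) needs the reflexive clause; and one must handle the height-matching in (JPP) cleanly, since a priori $A$ and $B$ need not have the same height and no epimorphism exists between fans of different heights. Neither point is deep, but both are where a careless argument would break.
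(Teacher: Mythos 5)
Your argument rests on a ``level--preservation'' lemma that is false, and this breaks the amalgamation step. Because the relation $R$ contains the diagonal ($R^T(s,t)$ holds when $s=t$), an epimorphism between fans may collapse a vertex onto its immediate predecessor; hence epimorphisms typically \emph{decrease} height, and the image of a branch is in general only an initial segment of a branch of the target, not a branch. Indeed your own (JPP) step exhibits exactly such a map (height $h+1$ onto height $h$ by folding the top level down), contradicting the claim you invoke later; and height-decreasing epimorphisms are essential to the whole setting, since the projective Fra\"{i}ss\'{e} limit $\lel$ is an inverse limit of fans of growing height. Consequently, in (AP) you may not assume that $B_1$, $B_2$, $A$ have a common height, nor that $\phi_1,\phi_2$ send the $i$-th vertex of a branch to the $i$-th vertex of a branch of $A$. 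Your fibre product is then not even well defined (the condition $\phi_1(b)=\phi_2(b')$ ``as branches of $A$'' need not make sense), and even when the heights happen to agree the coordinate projections ``$i$-th vertex $\mapsto$ $i$-th vertex'' do not satisfy $\phi_1\circ\phi_3=\phi_2\circ\phi_4$ once $\phi_1$ or $\phi_2$ collapses an interval of consecutive vertices onto a single vertex of $A$.

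The missing idea is a ``stretching'' construction that accommodates fibres of different sizes: for single branches one takes a branch of height $\sum_i \max\bigl(|\phi_1^{-1}(a^i)|,|\phi_2^{-1}(a^i)|\bigr)$, partitions it into consecutive blocks $K_i$ of that maximal size, and maps $K_i$ onto $\phi_1^{-1}(a^i)$ and onto $\phi_2^{-1}(a^i)$ in an $R$-preserving way; one then combines such branches, pairing branches of $B_1$ and $B_2$ whose images lie over the same branch of $A$ and making sure every branch of both $B_1$ and $B_2$ is covered, and finally equalizes the heights of the resulting branches. This is exactly the shape of the paper's proof of the AP for the ordered family $\F_<$ in Section 6 (the Claim about one-branch fans and the ensuing induction), and of the proof for $\F$ in the cited paper [BK]; your (JPP) argument, by contrast, is essentially fine once the false lemma is discarded.
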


By Theorem \ref{is}, there exists a unique Fra\"{i}ss\'{e} limit of $\F$, which we denote by $\lel=(\lel, R^{\lel})$. Let $ R_S^{\lel}$ be the symmetrization of $ R^{\lel}$, that is, 
$ R_S^{\lel}(s,t)$ if and only if $ R^{\lel}(s,t)$ or $  R^{\lel}(t,s)$, for  $s,t\in\lel$.

\begin{thm}
The relation $ R_S^{\lel}$ is an equivalence relation which has only one and two element equivalence classes.
\end{thm}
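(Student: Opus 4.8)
The plan is to verify that $R^{\lel}_S$ is reflexive (inherited from the reflexivity of every $R^T$, $T\in\F$), symmetric (by definition) and transitive, and that each of its classes has at most two elements; both the transitivity and the size bound will be extracted from the single assertion $(\star)$: \emph{there are no pairwise distinct $x,y,z\in\lel$ with $R^{\lel}_S(x,y)$ and $R^{\lel}_S(y,z)$}. Indeed, a failure $R^{\lel}_S(x,y)\wedge R^{\lel}_S(y,z)\wedge\neg R^{\lel}_S(x,z)$ of transitivity would force $x,y,z$ distinct, contradicting $(\star)$, and three distinct members of one class would in particular witness a failure of $(\star)$.

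To prove $(\star)$ I would argue by contradiction. Since $\lel$ is compact and zero-dimensional, fix pairwise disjoint clopen sets $U\ni x,\ V\ni y,\ W\ni z$; by Remark \ref{coveri} there is an epimorphism $\phi\colon\lel\to T$ onto some $T\in\F$ refining the open cover $\{U,V,W,\lel\setminus(U\cup V\cup W)\}$, so that $a:=\phi(x),\ b:=\phi(y),\ c:=\phi(z)$ are pairwise distinct vertices of $T$. Every epimorphism preserves $R$ (i.e.\ $R^{\lel}(s,t)$ implies $R^T(\phi(s),\phi(t))$), and in a fan every non-root vertex has a unique immediate predecessor and at most one immediate successor, while the root has no predecessor and possibly several successors. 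Using this, the symmetry of $R^{\lel}_S$, and relabelling $x\leftrightarrow z$ if needed, I would reduce to exactly two cases: (A) $R^{\lel}(x,y)\wedge R^{\lel}(y,z)$, so that $a\to b\to c$ is a descending path in $T$ with $b$ neither root nor leaf; and (C) $R^{\lel}(y,x)\wedge R^{\lel}(y,z)$, so that $a$ and $c$ are distinct immediate successors of $b$, forcing $b$ to be the root of $T$. The remaining orientation $R^{\lel}(x,y)\wedge R^{\lel}(z,y)$ is impossible outright, as it would make the distinct vertices $a,c$ both the immediate predecessor of $b$.

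I would dispatch both cases (A) and (C) with one use of the extension property of $\lel$ (which it enjoys, being the projective Fra\"{i}ss\'{e} limit of $\F$; it follows from (L1) and (L3)). Let $B\in\F$ be the fan obtained from $T$ by subdividing every edge once, and let $f\colon B\to T$ be the (surjective homomorphism, hence) epimorphism fixing each original vertex and sending the subdivision vertex of an edge $\{u,w\}$, $w$ an immediate successor of $u$, to $u$; then $B\in\F$ since branches of height $n$ in $T$ become branches of height $2n$ in $B$, an original vertex of height $d$ lies at height $2d$ in $B$, a subdivision vertex below it at height $2d+1$, and $f^{-1}(u)$ equals $u$ together with the subdivision vertices on the edges descending from $u$. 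Applying the extension property to $\phi$ and $f$ gives an epimorphism $h\colon\lel\to B$ with $\phi=f\circ h$; then $h(x),h(y),h(z)$ are pairwise distinct, and $R^{\lel}_S(x,y),\ R^{\lel}_S(y,z)$ pass to $R^{B}_S(h(x),h(y)),\ R^{B}_S(h(y),h(z))$ since $h$ preserves $R$. As distinct $R^{B}_S$-related vertices of $B$ have heights differing by exactly $1$, inspecting the heights available in $f^{-1}(a), f^{-1}(b), f^{-1}(c)$ forces, in case (A), $h(y)$ to lie simultaneously at height $2\,\mathrm{ht}(b)$ and at height $2\,\mathrm{ht}(b)+1$, and, in case (C), $h(y)$ to be the (unique) immediate predecessor in $B$ of both $h(x)$ and $h(z)$, while these two predecessors are distinct subdivision vertices because $a\ne c$. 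Either way we get a contradiction, which proves $(\star)$ and the theorem.

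The main obstacle will be case (C): once $\phi(y)$ is the root of $T$ a fan imposes no bound on the number of its successors, so no purely ``finitary'' contradiction is available and one must genuinely lift through the subdivided fan $B$, where the two successors of $h(y)$ are pinned below two \emph{different} subdivision vertices. The remaining ingredients — the elementary combinatorics of fans, the fibre structure of $f$, and the fact that the projective Fra\"{i}ss\'{e} limit has the extension property — are routine.
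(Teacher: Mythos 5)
Note first that this paper does not actually prove the statement: it is listed among results imported from \cite{BK}, so there is no in-paper proof to compare yours with. Judged on its own, your argument is correct and self-contained relative to what this paper provides: the separation of $x,y,z$ via Remark \ref{coveri}, the derivation of the extension property from (L1) and (L3), the observation that the edge-subdivision map $f\colon B\to T$ is a surjective homomorphism (hence an epimorphism) with $B\in\F$ because all branches of $T$ have equal height, the exclusion of the orientation $R^{\lel}(x,y)\wedge R^{\lel}(z,y)$ already in $T$ (no vertex of a fan has two distinct immediate predecessors), and the height bookkeeping in cases (A) and (C) all check out; this is the same kind of ``lift along a height-refining epimorphism'' argument used in \cite{BK}. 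The only spot that deserves an explicit word is reflexivity: it is not formally ``inherited from the reflexivity of every $R^T$'', because epimorphisms only push the relation forward, so reflexivity of the finite structures does not by itself transfer to $\lel$. It does follow quickly from the lifting (``iff'') clause in the definition of an epimorphism together with Remark \ref{coveri}: if $(x,x)\notin R^{\lel}$, pick a clopen $U\ni x$ with $(U\times U)\cap R^{\lel}=\emptyset$, refine the cover $\{U,\lel\setminus\{x\}\}$ by an epimorphism $\phi\colon\lel\to A$, and lift $R^{A}(\phi(x),\phi(x))$ to two $R^{\lel}$-related points of $\phi^{-1}(\phi(x))\subset U$, a contradiction. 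This also matters for your reduction of transitivity to $(\star)$, since the degenerate case $x=z$ is handled exactly by reflexivity; with that half-line added, the proof is complete.
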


\begin{thm}
The quotient space $\lel/R^{\lel}_S$ is homeomorphic to the Lelek fan $L.$
\end{thm}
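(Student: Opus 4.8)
The plan is to verify that $X:=\lel/R^{\lel}_S$ meets the characterization of the Lelek fan recalled above (see \cite{BO} and \cite{C}): that it is a nondegenerate subcontinuum of the Cantor fan with a dense set of endpoints. The soft part is that $X$ is a compact metric space: $R^{\lel}_S=R^{\lel}\cup(R^{\lel})^{-1}$ is closed since $R^{\lel}$ is, and by the preceding theorem it is an equivalence relation with classes of size at most two, so the quotient $X$ is compact (a continuous image of $\lel$) and Hausdorff (as $R^{\lel}_S$ is closed), hence --- being the continuous image of the compact metrizable space $\lel$ --- itself metrizable. It remains to show that $X$ is connected and nondegenerate, that it embeds into the Cantor fan, and that its endpoints are dense.

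For connectedness I would argue by contradiction. A clopen partition $X=U\sqcup V$ into nonempty sets lifts, via the quotient map $q$, to a clopen partition $\lel=q^{-1}(U)\sqcup q^{-1}(V)$ into nonempty $R^{\lel}_S$-saturated sets, and because $R^{\lel}\subseteq R^{\lel}_S$ the two ends of any $R^{\lel}$-edge lie in the same piece. By Remark~\ref{coveri} refine $\{q^{-1}(U),q^{-1}(V)\}$ by an epimorphism $\phi\colon\lel\to A$ onto a finite fan; every fibre then lies in one piece, yielding a partition $A=A_U\sqcup A_V$ into nonempty sets with no edge between them --- if $R^A(a,b)$ with $a\ne b$ then, $\phi$ being an epimorphism, some $s\in\phi^{-1}(a)$ and $t\in\phi^{-1}(b)$ satisfy $R^{\lel}(s,t)$, forcing $a$ and $b$ into the same part. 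This contradicts connectedness of the fan $A$. Nondegeneracy is immediate: by (L1) pick an epimorphism $\phi$ onto a fan of height at least two; for its root $r$ and a leaf $\ell$ no point of $\phi^{-1}(r)$ is $R^{\lel}_S$-related to a point of $\phi^{-1}(\ell)$ (neither $R^A(r,\ell)$ nor $R^A(\ell,r)$ holds), so $q(\phi^{-1}(r))\ne q(\phi^{-1}(\ell))$ and $X$ has at least two points.

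Next I would realize $\lel$ as an inverse limit $\varprojlim(A_n,\pi_n)$ of a generic sequence in $\F$, chosen so that the common branch-height of $A_n$ tends to infinity, and build from the $A_n$ a compatible family of maps into finite grid approximations of $C\times[0,1]$ whose inverse limit is a continuous surjection $\lel\to C\times[0,1]$ factoring through $q$; the point is to arrange it so that two threads of $\lel$ receive the same image \emph{exactly} when they are $R^{\lel}_S$-related, which exhibits $X$ as a subcontinuum of the Cantor fan $C\times[0,1]/\!\sim$. I expect this step to be the main obstacle: the combinatorics of choosing the inverse system and the approximating maps so that the identification collapses precisely $R^{\lel}_S$. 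Granting it, every \emph{leaf-thread} $z=(z_n)\in\lel$ --- one with $z_n$ a maximal (leaf) vertex of $A_n$ for every $n$ --- is sent to a tip of the Cantor fan, hence $q(z)$ is an endpoint of $X$.

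Finally, for density of the endpoints: let $x\in\lel$ and let $W\subseteq X$ be open with $q(x)\in W$. Since $\lel$ is zero-dimensional choose a clopen $\widetilde W$ with $x\in\widetilde W\subseteq q^{-1}(W)$, and by Remark~\ref{coveri} an epimorphism $\phi\colon\lel\to A$ refining $\{\widetilde W,\lel\setminus\widetilde W\}$, so that $\phi^{-1}(a^\ast)\subseteq q^{-1}(W)$ for $a^\ast:=\phi(x)$. Now iterate the extension property of $\lel$ (Proposition~\ref{12ep}): at each stage, given an epimorphism $\psi\colon\lel\to B$ with a distinguished vertex $b^\ast$, choose a taller fan $B'$ and an epimorphism $p\colon B'\to B$ possessing a leaf $\ell'$ of $B'$ with $p(\ell')=b^\ast$ --- possible because $p$ may stall at the root before climbing up to $b^\ast$ --- and lift $\psi$ through $p$; arranging in addition that the fans $B'$ dominate the defining system, one obtains a cofinal sequence of epimorphisms and a decreasing sequence of nonempty clopen leaf-fibres all contained in $\phi^{-1}(a^\ast)$. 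Any point $z$ of their intersection (nonempty by compactness) is a leaf-thread with $q(z)\in W$, so $X$ has dense endpoints, and the characterization of $L$ now gives $X\cong L$.
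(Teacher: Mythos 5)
There is a genuine gap, and you have named it yourself: the entire content of the theorem is concentrated in the step you introduce with ``Granting it''. Your soft preliminaries (compactness, metrizability via the closed equivalence relation $R^{\lel}_S$, connectedness via Remark~\ref{coveri}, nondegeneracy) are fine, but they only show that $X=\lel/R^{\lel}_S$ is a nondegenerate metric continuum. The characterization of the Lelek fan you invoke requires $X$ to be a subcontinuum of the Cantor fan (equivalently, a smooth fan) with dense endpoints, and the construction of an inverse system of maps into grid approximations of $C\times[0,1]$ that collapses \emph{exactly} $R^{\lel}_S$ --- including the fact that the preimage of the top is a single $R^{\lel}_S$-class --- is precisely the hard part; it is asserted, not proved. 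Moreover, your density argument is parasitic on that missing step: the claim that a ``leaf-thread'' $z$ (obtained by your iterated use of the extension property) has $q(z)$ an endpoint of $X$ has no justification without already knowing the arc structure of $X$, i.e.\ without the embedding into the Cantor fan or some intrinsic description of arcs in the quotient. So as written the argument is a correct reduction plus an honest placeholder where the proof should be.

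For comparison: the paper does not prove this statement at all --- it is quoted from \cite{BK} --- and in \cite{BK} the work is exactly the part you deferred. There one analyzes the quotient directly: the top point, the arcs/blades of $X$ coming from chains in $\lel$, smoothness (or an embedding into the Cantor fan), and an intrinsic characterization of which points of $\lel$ project to endpoints of $X$; only then does a genericity argument of the kind you sketch (refining a clopen set by an epimorphism and extending along taller fans) yield density of endpoints, and the Bula--Oversteegen/Charatonik characterization finishes the proof. If you want to complete your proposal, the two things to supply are (i) a proof that $X$ is a smooth fan (your inverse-limit embedding, carried out in detail, or a direct verification of arcwise connectedness, hereditary unicoherence and smoothness), and (ii) a structural criterion in $\lel$ for $q(z)$ to be an endpoint, so that your leaf-thread construction actually produces endpoints.
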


We denote by $\aut$  the group of all automorphisms of $\lel$, that is, the group of all homeomorphisms of $\lel$ that preserve the relation $R^{\lel}$.
This is a topological  group when equipped with the compact-open topology inherited from $H(\lel)$,
 the group of all homeomorphisms of the Cantor set underlying the structure $\lel$.
Since $R^\lel$ is closed in $\lel\times \lel$, the  group $\aut$ is closed in $H(\lel)$.

Note that every $h\in \aut$ induces a homeomorphism  $h^*\in H(L)$
satisfying $h^*\circ\pi(x)=\pi\circ h(x)$ for $x\in\lel$. We will frequently identify $\aut$ with the corresponding subgroup $\{h^*: h\in \aut\}$ of $H(L)$. 
Observe  that the compact-open topology  on $\aut$ is finer than the topology on $\aut$ that is inherited from the compact-open 
topology on $H(L)$.

\subsection{Ultrafilters}
In this section, we introduce the notion of an ultrafilter, which we will use in Section \ref{KPTcorrespondence}. 

\begin{definition}\label{filter}
Let $X$ be a set and let $\mathcal{E}$ be a family of subsets of $X.$ We say that $\mathcal{E}$ is a {\em filter} on $X$ if 
\begin{itemize}
\item[(1)] whenever $A\in\mathcal{E}$ and $B\supset A,$ then also $B\in\mathcal{E}$ and 
\item[(2)] for every $A,B\in\mathcal{E}$ also $A\cap B\in \mathcal{E}.$
\end{itemize}
The family $\mathcal{E}$ is an {\em ultrafilter} if in addition
\begin{itemize}
\item[(3)]
 for every $A\subset X$ either $A\in\mathcal{E}$ or $X\setminus A\in\mathcal{E} $ (but not both).
\end{itemize}
An ultrafilter is {\em free} if it does not contain a singleton.
\end{definition}
\begin{rem}\label{ultrapartition}
{\rm Note that for every ultrafilter  $\mathcal{E}$ on $X$, $A\in \mathcal{E}$ and a
 partition of $A$  into $A_1,\ldots,A_n$,  there is exactly one $i=1,\ldots, n$
such that $A_i\in\mathcal{E}$. }
\end{rem}

\begin{rem}\label{remultra}
{\rm Any family satisfying the condition (2) in Definition \ref{filter} 
can be extended to a filter by simply adding 
all supersets, and every filter can be extended to an ultrafilter by Zorn's lemma. }
\end{rem}

\section{Dualization of the   Kechris-Pestov-Todor\v{c}evi\'c  correspondence}\label{KPTcorrespondence}
In this section, we prove Theorem \ref{kpt} that   dualizes the   Kechris-Pestov-Todor\v{c}evi\'c   correspondence between extreme amenability 
of automorphism groups of countable ultrahomogeneous linearly ordered structures
and the structural Ramsey theory (Theorem 4.5 in \cite{KPT}), which was further extended by Nguyen van Th\'e (Theorem~1 in \cite{NVT}) to structures that need not be linearly ordered.

Let $\G$ be a projective  Fra\"{i}ss\'e family with the projective  Fra\"{i}ss\'e limit $\g$. 
Let $G={\rm Aut}(\g)$ be the automorphism group of $\g$ equipped with the compact-open topology.

We first prove an analogue of Proposition 3 in \cite{NVT}.

\begin{prop}\label{infRamsey}
Suppose that $\G$ is rigid.
Then the following are equivalent.
\begin{itemize}
\item[(1)] The class $\G$ is a  Ramsey class.
\item[(2)] For every $A,B\in \G$ and every colouring $c:{\mathbb{G}\choose A}\to \{1,2,\ldots,r\}$ there exists $\psi\in {\mathbb{G} \choose B}$ such that ${B\choose A}\circ\psi$ is monochromatic.
\end{itemize}
\end{prop}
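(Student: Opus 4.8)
The plan is to prove this equivalence following the standard pattern from \cite{NVT}, adapted to the projective Fra\"{i}ss\'e setting.

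\textbf{The direction $(2)\to(1)$.} This is the ``infinite implies finite'' direction and should follow by a compactness argument. Suppose the Ramsey property fails for some $A,B\in\G$ and some number of colours $r$: for every $C\in\G$ there is a bad colouring $c_C$ of ${C\choose A}$ with no $g\in{C\choose B}$ making ${B\choose A}\circ g$ monochromatic. First I would use (L1) together with the amalgamation/joint projection structure of $\G$ to organize the family into a cofinal sequence $C_0\leftarrow C_1\leftarrow C_2\leftarrow\cdots$ of epimorphisms whose inverse limit is $\g$ (so that every epimorphism $\g\to A$ factors through some $C_n$, and every epimorphism between members of $\G$ is, up to composition, one of the bonding maps). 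Then, using K\"onig's lemma (or an ultrafilter on the index set, which is why ultrafilters were just introduced), I would extract a colouring $c$ of ${\g\choose A}$ by taking, along a branch, a coherent limit of the bad colourings $c_{C_n}$. Given this $c$ and the $\psi\in{\g\choose B}$ supplied by (2), the composition $c$ restricted to ${B\choose A}\circ\psi$ is monochromatic; pushing $\psi$ down to a sufficiently deep level $C_n$ (again via (L2), which lets us factor the finite-range map $c$ through an epimorphism) yields a $g\in{C_n\choose B}$ contradicting badness of $c_{C_n}$.

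\textbf{The direction $(1)\to(2)$.} Fix $A,B\in\G$ and a colouring $c:{\g\choose A}\to\{1,\dots,r\}$. The colouring $c$ need not factor through a finite quotient, so the first step is to use (L2): since the domain is ${\g\choose A}$ and ranges over the finite set of colours — wait, more carefully, I would fix \emph{one} reference epimorphism $\phi_0:\g\to A$ and note that, via rigidity of $\G$ and projective ultrahomogeneity (L3), the set ${\g\choose A}$ is the orbit of $\phi_0$ under $G=\mathrm{Aut}(\g)$, identified with $G/\stab(\phi_0)$. Now apply the Ramsey property of $\G$ to $A,B,r$ to obtain $C\in\G$ with the arrow $C\to(B)^A_r$. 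By (L1) fix an epimorphism $\rho:\g\to C$. Composing, $c$ together with $\rho$ does not immediately give a colouring of ${C\choose A}$ — the issue is that different epimorphisms $\g\to A$ factoring through $\rho$ as $f\circ\rho$ with $f\in{C\choose A}$ must be assigned $c$'s value consistently. Here I would instead run the argument on the ``approximating'' structure: use (L2) to find $B'\in\G$, an epimorphism $\psi':\g\to B'$, and a factorization so that $c$ is determined on the relevant part by a colouring $\bar c$ of ${B'\choose A}$; then apply the finite Ramsey property with $B'$ in the role of ``$C$'' to get $g:B'\to B$ with ${B\choose A}\circ g$ $\bar c$-monochromatic; finally set $\psi=g\circ\psi':\g\to B$ and check ${B\choose A}\circ\psi$ is $c$-monochromatic. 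The verification that composition behaves correctly — i.e. that for $f\in{B\choose A}$ the value $c(f\circ\psi)$ equals $\bar c(f\circ g)$ — uses that $c$ factors through $\psi'$ on this finite collection of maps.

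\textbf{Main obstacle.} The delicate point, in both directions, is the bookkeeping of how a colouring of the ``infinite'' set of epimorphisms ${\g\choose A}$ relates to colourings of finite sets ${C\choose A}$: an arbitrary colouring of ${\g\choose A}$ carries infinitely much information, so one cannot literally transfer it to a finite structure. The resolution is that we only ever need to control $c$ on a \emph{finite} subset of ${\g\choose A}$ at a time (the maps of the form $f\circ\psi$ for $f\in{B\choose A}$), and on such a finite set $c$ does factor through a finite quotient by (L2). Getting this localization right — choosing the finite quotient after, not before, we know which finitely many epimorphisms matter, and ensuring the chosen quotient is compatible with the Ramsey witness $C$ — is where the care is needed, and it is exactly the projective analogue of Nguyen van Th\'e's Proposition 3. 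I expect the proof to otherwise be a faithful dualization of \cite{NVT}, with epimorphisms replacing embeddings and (L1)--(L3) replacing the usual Fra\"{i}ss\'e axioms throughout.
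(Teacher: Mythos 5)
Your direction $(2)\to(1)$ is, in outline, the same argument the paper gives: the paper also argues by contradiction from a family of bad colourings $\chi_C$ and uses a compactness device, namely a free ultrafilter on $\bigcup_{D\in\G}{\g\choose D}$ containing, for every $\phi\in{\g\choose D}$, the set $K_\phi$ of epimorphisms from $\g$ through which $\phi$ factors (the finite-intersection property of these sets comes exactly from your (L2)-common-refinement observation); the colour of $\phi$ is then the ultrafilter-generic value that the $\chi_C$'s give to the factorization of $\phi$, which is unique because the factoring epimorphism is surjective. Your version, organized along a fundamental chain with a K\"onig/ultrafilter limit, is the same idea and would go through with the details you wave at (every epimorphism from $\g$ factors through a deep enough level, and the finitely many maps $h\circ\psi$, $h\in{B\choose A}$, are handled at a single ultrafilter-large level).

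The genuine gap is in your $(1)\to(2)$. The obstacle you raise --- that ``$c$ together with $\rho$ does not immediately give a colouring of ${C\choose A}$'' because of a consistency problem --- does not exist: setting $\tilde c(f)=c(f\circ\rho)$ for $f\in{C\choose A}$ assigns each $f$ exactly one colour, and since $\rho$ is surjective the map $f\mapsto f\circ\rho$ is injective, so nothing needs to be checked and (L2) is irrelevant here (it factors continuous maps defined on $\g$, whereas $c$ is an arbitrary function on the set ${\g\choose A}$ with no continuity hypothesis; no such factoring of $c$ is possible or needed). The workaround you substitute is invalid as written: you produce $B'$ and $\psi':\g\to B'$ from (L2) and then ``apply the finite Ramsey property with $B'$ in the role of $C$,'' but the Ramsey property only asserts the existence of \emph{some} witness $C$ for the triple $(A,B,r)$; a structure $B'$ handed to you beforehand by (L2) has no reason to be such a witness, so the quantifiers are inverted. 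The correct argument is the one you started and abandoned, and it is the paper's entire proof of this direction: fix $C$ from (1), fix $\rho\in{\g\choose C}$ by (L1), colour ${C\choose A}$ by $f\mapsto c(f\circ\rho)$, take $g\in{C\choose B}$ with ${B\choose A}\circ g$ monochromatic for this colouring, and set $\psi=g\circ\rho$; then $c(h\circ\psi)=\tilde c(h\circ g)$ is constant for all $h\in{B\choose A}$. So the proposal is repairable in one line, but as written this direction rests on a phantom difficulty and an illegitimate application of the Ramsey property, and it also mislocates the work: the substantive content of the proposition is entirely in $(2)\to(1)$.
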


\begin{proof}
Since $\g$ is projectively universal, (1) easily implies (2).

For the reverse implication, suppose that (2) holds, but there are $A,B\in \G$ for which the Ramsey property fails, i.e., for every $C\in \G$ there exists a colouring $\chi_C:{C\choose A}\to \{1,2,\ldots,r\}$ such that for no $\gamma\in {C\choose B}$ the set ${B\choose A}\circ \gamma$ is monochromatic.

We first show that there is    a free ultrafilter $\u$ on 
$\bigcup_{D\in\G} {\g\choose D}$
such that for every   $D\in \G$ and every 
$\phi\in {\g\choose D}$ 
we have
\[
K_{\phi}=\bigcup_{C\in \G}\{\psi:\psi\in {\g\choose C} \  \exists \psi'\in {C\choose D} \ \rm{such \ that \ } \phi=\psi'\circ\psi\}\in \u.
\]

 Suppose that we have 
$\phi_0:\g\to D_0$ and $\phi_1:\g\to D_1.$ 
 By (L2), we can find an $E\in\G$ and an epimorphism $\psi:\g\to E$ such that $\{\psi^{-1}(e):e\in E\}$ refines both $\{\phi_i^{-1}(d):d\in D_i\}$ for $i=1,2.$ Then clearly $K_{\psi}\subset K_{\phi_0}\cap K_{\phi_1},$ so by Remark \ref{remultra}  such a $\mathcal{U}$ exists.

 Now, for  $\phi\in {\g\choose A},$ we will write $K_{\phi}$ as a disjoint union $K_{\phi}^1\cup K_{\phi}^2\cup\ldots\cup K_{\phi}^r$
 where
\[
K^{\varepsilon}_{\phi}=\bigcup_{C\in \G}\{\psi\in {\g\choose C} : \  \exists \psi'\in {C\choose A} \rm{such \ that \ }  (\psi'\circ \psi=\phi) \ \& \ (\chi_C(\psi')=\varepsilon) \},
\]
for $\varepsilon=1,2,\ldots,r$.
Note that since $\psi$ is surjective, then $\psi'$ if it exists, it is unique.
We define a colouring $c:{\g\choose A}\to \{1,2,\ldots,r\}$ by $c(\phi)=\varepsilon$ if and only if $K^{\varepsilon}_{\phi}\in\u.$ Note that $c$ is well defined by Remark \ref{ultrapartition}. We claim that for no $\delta\in {\g\choose B},$ the collection ${B\choose A}\circ\delta$ is $c$-monochromatic. Suppose 
on the contrary that there is a  $\delta$  such that ${B\choose A}\circ \delta$ is $c$-monochromatic in a colour $\varepsilon_0.$ Then the set
\[
\bigcap_{\alpha\in {B\choose A}} K_{\alpha\circ \delta}^{\varepsilon_0} \cap K_{\delta}
\]
belongs to $\u;$ 
in particular it is nonempty containing an element $\xi\in {\g\choose C}$ for some $C\in\G.$ 
Since $\xi\in K_{\delta}$,  that there is $\xi'\in {C\choose B}$ such that $\delta=\xi'\circ\xi.$
Therefore for every  $\alpha\in {B\choose A}$, we have  $\alpha\circ\delta=(\alpha\circ\xi')\circ\xi.$
Since $\xi\in K_{\alpha\circ\delta}^{\varepsilon_0},$ we obtain that $\chi_C(\alpha\circ\xi')=\varepsilon_0$ for every $\alpha\in{B\choose A}$. 
This implies that the set ${B\choose A}\circ \xi'$ is $\chi_C$-monochromatic in the colour $\varepsilon_0$-- a contradiction.
\end{proof}

To prove Theorem \ref{kpt}, we follow the approach to extreme amenability via syndetic sets from the dissertation \cite{B} of the first author.

For $A\in \G$ and an epimorphism $\phi:\mathbb{G}\to A,$ let 
\[
G_{\phi}=\{g\in G: \forall a\in A \ g(\phi^{-1}(a))=\phi^{-1}(a)\}
\]
be the pointwise stabilizer of $\phi.$ Equivalently, $G_{\phi}=\{h\in G: \phi\circ h=\phi\}.$ It is easy to see that $G_{\phi}$ is an open, and therefore clopen subgroup of $G.$ The collection 
\[\{G_{\phi}: \phi\in {\mathbb{G}\choose A}, A\in\G\}  \]
forms a basis at the identity of $G$.

Note that for every $g\in G,$ $G_{\phi}g=\{h:\forall a\in A\  h^{-1}(\phi^{-1}(a))=g^{-1}(\phi^{-1}(a))\}$, that is,  $h\in G_{\phi}g$ if and only if $\phi\circ h=\phi\circ g.$ Projective ultrahomogeneity of $\g$ provides a natural bijective identification of $G/G_{\phi}$ with ${\mathbb{G}\choose A}$ via $G_{\phi}g\mapsto  \phi\circ g.$ 
Similarly, $gG_{\phi}=\{h\in G: \forall a\in A \ h(\phi^{-1}(a))=g(\phi^{-1}(a))\},$ that is, $h\in gG_{\phi}$ if and only if $\phi\circ h^{-1}=\phi\circ g^{-1}.$

We also introduce the setwise stabilizer $G_{(\phi)}$ of $\phi,$ that is, the clopen subgroup
\[
G_{(\phi)}=\{h: h(\{\phi^{-1}(a):a\in A\})= \{\phi^{-1}(a):a\in A\}\}.
\]
By the projective ultrahomogeneity of $\g$,
$h\in G_{(\phi)}$ if and only if for some automorphism $\psi$ of $A$  we have $\phi\circ h=\phi\circ \psi.$

\begin{definition}
A subset $A$ of a group $G$ is called \emph{syndetic} if there exist finitely many $g_1,\ldots,g_n\in G$ such that $\bigcup_{i=1}^n g_iA=G.$
\end{definition}

The following lemma characterizes extreme amenability in terms of syndetic sets.

\begin{lemma}[Barto\v{s}ov\'a \cite{B}, Lemma 11]
A topological group $G$ is extremely amenable if and only if for every pair $A,B$ of syndetic subsets of $G$ and every open neighbourhood $V$ of the identity in $G$ we have $VA\cap VB\neq \emptyset.$
\end{lemma}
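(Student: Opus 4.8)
The plan is to prove the characterization of extreme amenability via syndetic sets. I would argue both directions, though for the application in this paper only one implication is needed; let me sketch the full equivalence since the lemma is stated as a biconditional.

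\smallskip

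\textbf{Setup via right uniform structures and the Samuel compactification.} First I would recall that a topological group $G$ is extremely amenable if and only if its greatest ambit has a fixed point, or equivalently, by a standard argument, if and only if whenever $G$ acts on a compact Hausdorff space $X$ and $x\in X$, the orbit closure $\overline{Gx}$ contains a fixed point. The key technical device is to work with the \emph{Samuel compactification} $S(G)$ of $G$ with respect to the left uniformity (so that $G$ acts on $S(G)$ on the left by translation): $S(G)$ is the spectrum of the algebra of bounded left-uniformly continuous functions on $G$, and it is the universal ambit. Then $G$ is extremely amenable if and only if $S(G)$ has a $G$-fixed point. Points of $S(G)$ can be described concretely as certain filters (``near-ultrafilters'') of subsets of $G$ — or, what amounts to the same, as maximal families of ``large'' sets — and a fixed point corresponds to a $G$-invariant such family, i.e. a family of subsets of $G$ closed under left translation that has the finite intersection property together with all left translates.

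\smallskip

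\textbf{Reduction to syndetic sets.} The connection to syndeticity: a subset $A\subseteq G$ is syndetic precisely when $A$ is ``left-large'' in a sense dual to being ``small'', and the closed $G$-invariant subsets of $S(G)$ correspond to families of syndetic sets. More precisely, I would show that $G$ fails to be extremely amenable if and only if $S(G)$ decomposes into two disjoint nonempty closed $G$-invariant pieces in a way witnessed by a clopen partition; unwinding the definition of the topology on $S(G)$ (basic clopen sets are of the form $\widehat{A}=\{p\in S(G): A\in p\}$ for $A\subseteq G$), a $G$-invariant clopen set corresponds to a syndetic set $A$ whose complement is also syndetic, with the left-translate structure encoded by the open neighbourhood $V$ of the identity. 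Concretely: if $G$ is \emph{not} extremely amenable, I would produce syndetic $A,B$ and a neighbourhood $V$ of $1$ with $VA\cap VB=\emptyset$ by taking a minimal $G$-flow $X$ that is not a point, picking a continuous $G$-map to a nontrivial situation, pulling back a pair of disjoint open sets, and using a left-uniformly continuous function separating them to get the sets $A=\{g: gx\in U_1\}$, $B=\{g:gx\in U_2\}$ — these are syndetic by minimality (compactness gives finitely many translates covering $X$), and the uniform continuity yields the $V$. Conversely, if for every pair of syndetic $A,B$ and every $V$ we have $VA\cap VB\neq\emptyset$, then the family of ``thick'' sets (complements of sets that are not syndetic) together with all $V$-neighbourhoods generates a filter, which extends to a point of $S(G)$; one checks this point is $G$-fixed because the condition $VA\cap VB\neq\emptyset$ for all $V$ is exactly what guarantees that left translation does not move it off the closed invariant set it generates.

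\smallskip

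\textbf{The main obstacle.} The delicate point is the bookkeeping in the converse direction: turning the combinatorial condition ``$VA\cap VB\neq\emptyset$ for all syndetic $A,B$ and all $V$'' into an actual fixed point, i.e. verifying that the relevant family of subsets of $G$ genuinely has the finite intersection property and is genuinely invariant under \emph{all} left translations (not just those near the identity), and that the resulting point of $S(G)$ lies in every minimal subflow. This requires showing that an arbitrary translate $gA$ of a syndetic set is syndetic (immediate) and then iterating the hypothesis to handle finite intersections $\bigcap_i V_iA_i$ across several syndetic sets simultaneously — one reduces a finite intersection to a two-set intersection by induction, using that a finite intersection of syndetic sets need \emph{not} be syndetic, which is precisely why the neighbourhoods $V$ and the asymmetry between $VA\cap VB$ versus $A\cap B$ are essential. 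I expect the proof in \cite{B} handles exactly this; here I would cite it and only indicate the flow-theoretic translation above. Since the paper only uses the forward implication (from extreme amenability, deduce the syndetic intersection property — used implicitly later, or rather its contrapositive to \emph{prove} extreme amenability of $H$), I would make sure that direction is spelled out cleanly: from a would-be pair of syndetic sets with $VA\cap VB=\emptyset$, build a $2$-point quotient flow of the greatest ambit with no fixed point.
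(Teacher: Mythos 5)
The paper itself offers no proof of this lemma: it is quoted from the first author's dissertation \cite{B} (Lemma 11), whose near-ultrafilter description of the Samuel compactification is the framework you gesture at, so the only question is whether your sketch stands on its own — and it has two genuine problems. The concrete one: your guiding claim that $G$ fails to be extremely amenable if and only if $S(G)$ decomposes into two disjoint nonempty closed $G$-invariant pieces witnessed by a clopen partition, and the closing plan to ``build a $2$-point quotient flow of the greatest ambit'', cannot work. The copy of $G$ inside $S(G)$ is a single dense orbit, so $S(G)$ admits no nontrivial invariant clopen partition and no nonconstant equivariant map onto a two-point trivial flow; failure of extreme amenability only means there is no fixed point. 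The direction ``extreme amenability implies $VA\cap VB\neq\emptyset$'' should instead go through a fixed point $p\in S(G)$: from $G=\bigcup_i g_iA$ one gets $S(G)=\bigcup_i g_i\overline{A}$, hence $p\in\overline{A}$ and likewise $p\in\overline{B}$, and two subsets of $G$ whose closures in the Samuel compactification meet cannot be uniformly separated, which is exactly $VA\cap VB\neq\emptyset$ for every $V$ (equivalently, from $VA\cap VB=\emptyset$ one builds a bounded uniformly continuous $[0,1]$-valued function equal to $0$ on $A$ and $1$ on $B$, extends it to $S(G)$, and evaluates at $p$).

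The second problem is the bookkeeping of directions. Your concrete minimal-flow construction (non-extremely-amenable $G$ gives a nontrivial minimal flow; choose $U_1,U_2$ and $V$ with $VU_1\cap VU_2=\emptyset$ and pull back to syndetic $A,B$ with $VA\cap VB=\emptyset$) is correct, and it is precisely the contrapositive of ``the syndetic intersection condition implies extreme amenability''. That is the direction the paper actually applies later: Lemma \ref{eaautolemma} is fed into the implication (2)$\Rightarrow$(1) of Theorem \ref{kpt} in the form ``not extremely amenable yields a clopen subgroup $H$ and $K$ with $HK$ and $G\setminus HK$ both syndetic'' — contrary to your closing remark that only ``extreme amenability implies the condition'' is needed. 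Your ``conversely'' paragraph then attempts the same implication a second time, by extending a family of thick sets and $V$-neighbourhoods to a $G$-fixed point of $S(G)$; this is where you concede the finite-intersection/invariance obstacle and defer to \cite{B}, so as written that route is a gap — though a removable one, since your own minimal-flow argument already settles that direction, while the direction genuinely missing from your sketch is the one discussed above.
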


In the lemma above, it is sufficient to only consider open sets $V$ taken from a neighbourhood basis of the identity in $G$.
Since for  an epimorphism $\phi:\mathbb{G}\to A$  we have  $G_{\phi}(G_{\phi} A)=G_{\phi} A,$ we immediately obtain the following equivalence.

\begin{lemma}\label{eaautolemma}
Let $G$ be a topological group that admits a neighbourhood basis at the identity consisting of open subgroups. 
 Then the following are equivalent.
\begin{itemize}
\item[(1)] $G$ is extremely amenable.
\item[(2)] For every clopen subgroup $H$ of $G$ and every $K\subset G$, at most one of 
$HK$ and $G\setminus HK$ is syndetic. 
\end{itemize}
\end{lemma}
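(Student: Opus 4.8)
The plan is to deduce Lemma \ref{eaautolemma} directly from the preceding lemma of Barto\v{s}ov\'a, exploiting that a clopen subgroup absorbs its own left-multiples. First I would record the trivial observation that since $G$ admits a neighbourhood basis at the identity consisting of open subgroups, and every open subgroup is automatically closed (its complement being a union of cosets), we may restrict the quantifier ``every open neighbourhood $V$ of the identity'' in Barto\v{s}ov\'a's lemma to range only over clopen subgroups $V$ of $G$; this is the remark made just before the statement. So $G$ is extremely amenable if and only if for every pair $A,B$ of syndetic subsets of $G$ and every clopen subgroup $V$ of $G$ we have $VA\cap VB\neq\emptyset$.

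For the implication $(1)\Rightarrow(2)$, I would argue by contraposition: suppose there is a clopen subgroup $H$ and a set $K\subset G$ with both $HK$ and $G\setminus HK$ syndetic. Set $A=HK$ and $B=G\setminus HK$, and take $V=H$. Since $H$ is a subgroup, $HA=H(HK)=HK=A$ and likewise $HB\subset\ldots$; more carefully, $HB=H(G\setminus HK)$, and because left multiplication by $H$ permutes the left cosets of $H$, and $HK$ is a union of left cosets of $H$ while $G\setminus HK$ is the complementary union of left cosets, we get $H(G\setminus HK)=G\setminus HK=B$. Hence $VA\cap VB=A\cap B=HK\cap(G\setminus HK)=\emptyset$, contradicting extreme amenability via the reformulated lemma.

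For $(2)\Rightarrow(1)$, assume $(2)$ and let $A,B$ be syndetic and $V$ a clopen subgroup; I must show $VA\cap VB\neq\emptyset$. The key point is that $VA$ and $VB$ are themselves of the form ``clopen subgroup times a subset'': indeed $VA=V\bigl(\bigcup_{a\in A}\{a\}\bigr)$ has the shape $HK$ with $H=V$ and $K=A$, and the same for $VB$. Moreover $VA$ and $VB$ are syndetic because $A\subset VA$ and $B\subset VB$, so any finite set of left-translates covering $G$ by copies of $A$ (resp.\ $B$) does the same for $VA$ (resp.\ $VB$). Now if $VA\cap VB=\emptyset$, then $VB\subset G\setminus VA$, so $G\setminus VA$ is syndetic (it contains the syndetic set $VB$), while $VA$ is also syndetic; applying $(2)$ with $H=V$, $K=A$ gives a contradiction, since it forbids both $VA=HK$ and $G\setminus VA=G\setminus HK$ from being syndetic. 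Therefore $VA\cap VB\neq\emptyset$, and by the reformulated Barto\v{s}ov\'a lemma $G$ is extremely amenable.

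I do not expect a genuine obstacle here; the only subtlety worth stating carefully is the bookkeeping that a clopen subgroup $H$ permutes its own left cosets, so that $H\cdot S=S$ whenever $S$ is a union of left cosets of $H$ — in particular $H(HK)=HK$ and $H(G\setminus HK)=G\setminus HK$ — and that this is exactly the identity $V(VA)=VA$ invoked in the text. Everything else is the observation that the syndeticity of a set is inherited by any superset, together with a contrapositive repackaging of the two lemmas.
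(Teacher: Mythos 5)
Your proposal is correct and follows essentially the same route as the paper, which deduces the lemma from Barto\v{s}ov\'a's syndetic-set criterion by restricting $V$ to clopen subgroups from the basis and using the absorption identity $V(VA)=VA$ (your $H(HK)=HK$, $H(G\setminus HK)=G\setminus HK$), together with the fact that supersets of syndetic sets are syndetic; the paper merely leaves these details as ``immediate.'' The only cosmetic slip is calling the sets $Hk$ ``left cosets'' of $H$ --- they are cosets of the form $Hg$ --- but this does not affect the argument.
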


\begin{proof}[Proof of Theorem \ref{kpt}]
(1) $\Rightarrow$ (2)
We first prove that $\G$ is rigid. 
Let $A\in \G$ and pick an epimorphism $\phi:\mathbb{G}\to A$ by projective universality. Then $G_{(\phi)}/G_{\phi}$ is a finite discrete space of cardinality $\rm{Aut}(A)$ with a natural transitive continuous action of $G_{(\phi)}$ given by $g(G_{\phi}h)=G_{\phi}hg^{-1}.$ Being an open subgroup of $G,$  $G_{(\phi)}$  is extremely amenable by Lemma 13 in \cite{BPT}, and therefore $|\rm{Aut}(A)|=|G_{(\phi)}/G_{\phi}|=1.$


Secondly, we show
that $\G$ is a Ramsey class.
Let $A\in\G$ and let $c:{\mathbb{G}\choose A}\to \{1,2,\ldots,r \}$ be a colouring. 
We view $c$ as a point in the compact space $X=\{1,2,\ldots,r\}^{{\mathbb{G}\choose A}}$ of all colourings of ${\mathbb{G}\choose A}$ by $r$ colours. We consider $X$ with the natural action of $G$ given by $g\cdot d(\phi)=d(\phi \circ g^{-1}).$ Let $Y$ be the closure of the orbit of $c.$ 
Since $G$ is extremely amenable, the induced action of $G$ on $Y$ has a fixed point $e$. By the projective ultrahomogeneity of $\mathbb{G}$, $G$ acts transitively on ${\mathbb{G}\choose A},$ and consequently $e$ must be constant, say with  the range $\{i\}\subset\{1,2\ldots,r\}.$  
Let $B\in \G$ and pick a $\gamma\in {\mathbb{G}\choose B},$ which exists by the projective universality of $\mathbb{G}$. Since $e\in \overline{Gc},$ there is $g\in G$ such that $c\restriction{{B\choose A}\circ\gamma\circ g}=e\restriction{{B\choose A}\circ\gamma}$, and therefore $c$ on ${B\choose A}\circ(\gamma\circ g)$ is  constant. Since $\G$ is rigid,  Proposition \ref{infRamsey} concludes the proof.

(2) $\Rightarrow$ (1)
Striving for a contradiction, suppose that $G$ is not extremely amenable. In the light of Lemma \ref{eaautolemma}, it means that there are  $A\in \G,$ an epimorphism $\phi:\mathbb{G}\to A$  
and $K_0,K_1\subset G$ such that both
$G_{\phi}K_0$ and $G\setminus G_{\phi}K_0=G_{\phi}K_1$ are syndetic. 
 Let $g_1,\ldots, g_n\in G$ witness  syndeticity of both, i.e., 
\[
\bigcup_{i=1}^n g_iG_{\phi}K_0=G=\bigcup_{i=1}^n g_i G_{\phi}K_1.
\]
Let $\phi_i:\mathbb{G}\to A$ be given by  $\phi_i=\phi \circ g_i^{-1}.$ Since $\G$ is rigid, we can apply the property (L2) to a disjoint clopen refinement of the cover $\{\phi_i^{-1}(a):a\in A,i=1,\ldots,n\}$ of $\mathbb{G}$ and find $B\in \G,$ an epimorphism $\gamma:\mathbb{G}\to B$ and surjections $\gamma_i:B\to A$ such that $\phi_i=\gamma_i\circ \gamma.$ Since $\gamma$ and $\phi_i$'s are epimorphisms, so are $\gamma_i$'s.

Define a colouring $c:{\mathbb{G} \choose A}\to \{0,1\}$ by $c(\psi)=\varepsilon$ if and only if whenever $k$ satisfies $\phi\circ k=\psi$ we have $k\in G_{\phi}K_{\varepsilon}.$ Let us remark that $c$ is well-defined as $\phi\circ k=\phi\circ l$ if and only if $G_{\phi}k=G_{\phi}l.$
By the Ramsey property, there is an epimorphism $\gamma':\mathbb{G}\to B$ such that ${B\choose A}\circ \gamma'$ is monochromatic in a colour $\varepsilon_0\in\{0,1\},$ {in particular, $c(\gamma_i\circ \gamma')=\varepsilon_0$ for every $i.$ Since $\mathbb{G}$ is projectively ultrahomogeneous, there is $g\in G$ such that $\gamma'=\gamma\circ g.$  We have that
\[
\gamma_i\circ\gamma'= \gamma_i \circ\gamma\circ g = \phi_i \circ g = \phi \circ g_i^{-1}g,
\]
which implies $g_i^{-1}g\in G_{\phi}K_{\varepsilon_0}$ and consequently $g\in g_iG_{\phi}K_{\varepsilon_0}$ for every $i$. It means that $g\notin \bigcup_{i=1}^n g_iG_{\phi}K_{1-\varepsilon_0}$, which is a contradiction.
}
\end{proof}

\section{$\F_<$ is a Ramsey class}\label{Ramsey class}

The goal of this section is to prove Theorem \ref{ordfans}.  
We will use Theorem \ref{Tstepup_d}, which will be proved in the next section.

Let $\G$ be a projective Fra\"{i}ss\'{e}  family.
Recall that ${B \choose A}$ is the set of all epimorphisms from $B$ onto $A,$ and
denote by $\G^{[2]}$ the set of all ordered pairs $(A,B)$ of elements in $\G$ such that ${B\choose A}\neq \emptyset.$
An ordered pair $(A,B)\in \G^{[2]}$ is a {\em Ramsey pair} if  there exists $C\in \G$ such that for every colouring $c:{C \choose A} \to\{1,2,\ldots,r\}$ there is $g\in {C \choose B}$ such that
\[
{B \choose A}\circ g = \left\{h\circ g:h\in {B \choose A}\right\}
\]
is $c$-monochromatic. 
Note that the class $\G$ is a Ramsey class if every pair $(A,B)\in\G^{[2]}$ is a Ramsey pair.

As in Section \ref{Lelekcons}, let $\mathcal{L}$ be the language consisting of one binary relation symbol $R$ and let $\F$
denote the family of finite  fans considered as $\mathcal{L}$-structures. Given $A\in \F,$ we always keep in mind the underlying natural partial order $\preceq_A$.

 Take the language $\mathcal{L}_<=\{R,S\}$ expanding $\mathcal{L}$ with one binary relation symbol   $S$ 
 and let $\F_<$ be the family of all 
 $A_<=(A,R^A,S^A)$ such that $(A,R^A)\in\F$ and
for some ordering $a_1<a_2<\ldots<a_n$ of  branches in $A$ we have 
$S^A(x,y)$ if and only if there are $i\leq j$ such that $x\in a_i$ and $y\in a_j.$ 
Note that the root $r$ of $A$ belongs to every branch  so $S^A(r,x)$  for every $x\in A$ and whenever $x,y$ belong the same branch we have $S^A(x,y)$. Observe that $S^A$ extends the natural partial order on $A$.

We will frequently use the following lemma. Its proof  is straightforward.
\begin{lemma}
A function
$f: B_<\to A_<$  is an epimorphism if and only if $f: B\to A$ is an epimorphism and there exist 
$1=k_1<\ldots<k_{m+1}=n+1$ such that for every $i=1,\ldots,n$ and $s=1,\ldots,m$, if $k_s\leq i<k_{s+1}$ then $f(b_i)\subset a_s$.

\end{lemma}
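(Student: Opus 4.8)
The plan is to use the explicit shape of $\mathcal{L}_<$-epimorphisms. Since all the structures involved are finite and discrete, $f\colon B_<\to A_<$ is an epimorphism if and only if $f\colon B\to A$ is a fan-epimorphism and $f$ preserves $S$ in both the forward direction and the ``back'' direction of the definition of epimorphism. So I would fix a fan-epimorphism $f\colon B\to A$ and prove that these two $S$-conditions together are equivalent to the stated block condition on branches.

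First I would record how $f$ acts on branches. Because $f$ is $R$-preserving and surjective one checks $f(r_B)=r_A$ (along a branch the values of $f$ move only away from the root, so a chain of values ending at $r_A$ must start at $r_A$; alternatively cite \cite{BK}), hence $f$ carries each branch $b_i$ onto a chain from $r_A$, i.e. either onto $\{r_A\}$ (call $b_i$ \emph{collapsed}) or onto an initial segment of a \emph{unique} branch $a_{\sigma(i)}$ of $A$. Using surjectivity and that each endpoint of $A$ lies in a unique chain from $r_A$, one also gets that every branch $a_s$ of $A$ equals $f(b_i)$ for some $i$. Setting $J_s=\{i: f(b_i)\neq\{r_A\}\text{ and }f(b_i)\subseteq a_s\}$, the sets $J_1,\dots,J_m$ are then nonempty, pairwise disjoint, with the complement of their union equal to the set of collapsed branches. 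A short combinatorial observation shows that the existence of $1=k_1<\dots<k_{m+1}=n+1$ with $f(b_i)\subseteq a_s$ for $k_s\le i<k_{s+1}$ is equivalent to $\max J_s<\min J_{s+1}$ for all $s$: if monotonicity holds, build the $k_s$ from the numbers $\max J_s$ and slot each collapsed branch into any block (legitimate since $\{r_A\}\subseteq a_s$ for every $s$); conversely each $J_s$ is forced to lie in the $s$-th block, so the blocks separate the $J_s$.

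For the forward implication, suppose $f$ preserves $S$ but $\max J_s\ge\min J_{s+1}$ for some $s$; pick $j\in J_{s+1}$ and $i\in J_s$ with $j<i$, and non-root vertices $u=f(b_j^{t})\in a_{s+1}$, $v=f(b_i^{t'})\in a_s$ (possible since $b_j,b_i$ are not collapsed). Then $S^B(b_j^{t},b_i^{t'})$ holds because $j<i$, so $S^A(u,v)$ must hold; but $u$ lies only in $a_{s+1}$, $v$ only in $a_s$, and $s+1>s$, a contradiction. For the converse, assume the monotonicity and check both halves of $S$-preservation. If $y_1\in b_p$, $y_2\in b_q$ with $p\le q$ and $S^B(y_1,y_2)$, then $f(y_1)$ lies in the branch of $A$ indexed by the block of $p$ and $f(y_2)$ in that of $q$, and these block indices are ordered, giving $S^A(f(y_1),f(y_2))$. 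Conversely, given $S^A(x,y)$ witnessed by $p'\le q'$, choose $b_p$ with $f(b_p)=a_{p'}$; if $p'=q'$ take both preimages of $x$ and $y$ inside $b_p$, and if $p'<q'$ take $y_1\in b_p$ with $f(y_1)=x$ and $y_2\in b_q$ with $f(y_2)=y$ and $f(b_q)=a_{q'}$, noting $p\le\max J_{p'}<\min J_{q'}\le q$, so $S^B(y_1,y_2)$ holds.

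I expect the only real friction to be the bookkeeping around collapsed branches (this is precisely why the statement asks only that suitable $k_s$ \emph{exist}) and the $p'=q'$ case in the last paragraph; the auxiliary fact that every branch of $A$ is the full image of a branch of $B$ — needed to produce the requisite preimages — can be quoted from \cite{BK} or re-proved in a couple of lines from surjectivity and $R$-preservation.
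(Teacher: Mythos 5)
Your proposal is correct. The paper gives no argument for this lemma (it is stated with the remark that its proof is straightforward), and what you write is precisely the expected verification: the root maps to the root, each branch of $B$ maps onto an initial segment of a unique branch of $A$ (or collapses to the root), surjectivity forces every branch of $A$ to be a full image, and then the block condition is exactly weak monotonicity of the induced assignment of branches, with the forward $S$-preservation ruling out an inversion and the full-branch preimages supplying the witnesses needed for the ``back'' direction of the $S$-clause. The only points requiring care --- collapsed branches and the case of two points on the same branch of $A$ --- are exactly the ones you flag and handle.
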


Let us start with the following special case of Theorem \ref{ordfans}.
\begin{lemma}\label{Ramseypair}
Let $A,B\in \F_<$ both consist of a single branch. If  $(A,B)\in \F^{[2]}$, then $(A,B)$ is a Ramsey pair.
\end{lemma}

We will see in a moment that Lemma  \ref{Ramseypair} is   essentially a reformulation of the classical Ramsey theorem. 
We let  $N^{[j]}$ denote the collection of all $j$-element subsets of $\{1,\ldots, N\}$.
We will often write $N$ instead of $N^{[1]}$.
\begin{thm}[Ramsey]\label{RT}
Let $k,l,r$ be natural numbers. Then there exists a natural number $N$ such that for every colouring $c:N^{[k]}\to \{1,2,\ldots,r\}$ there exists a subset $X$ of $N$ of size $l$ such that $X^{[k]}$ is $c$-monochromatic. Denote by $R(k,l,r)$ the minimal such $N$.
\end{thm}


\begin{proof}[Proof of Lemma \ref{Ramseypair}]
Suppose that $A$ has $k+1$ vertices $r_A = a^0\prec_A \ldots \prec_A a^k,$ $B$ has $l+1$ vertices $r_B=b^0\prec_B\ldots \prec_B b^l$, and  $r$ is given. Let $N=R(k,l,r)$ 
and let $C\in \F_{<}$ consist of a single branch with $N+1$  vertices $r_C=c^0\prec_C \ldots \prec_C c^N$. 
Every epimorphism $f\in{C\choose A}$ can be identified with a $k$-element subset of $C$ via $\{\min \{f^{-1}(a^i)\}:i=1,\ldots,k\},$ where the $\min$ is taken with respect to $\prec_C.$
Therefore any colouring $c: {C\choose A}\to \{1,2,\ldots,r\}$ induces a colouring $d: N^{[k]}\to\{1,2,\ldots,r\}.$ Let $X=\{x^1\prec\ldots\prec x^l\}\in N^{[l]}$ 
be such that $d$ restricted to $X^{[k]}$ is constant. Define an epimorphism $\phi:C\to B$ by $\phi(c^i)=b^j$ if  $x^j\preceq i\prec x^{j+1}$ and $\phi(c^i)=b^0$ if $i\prec x^1.$
Identifying ${B\choose A}$ with $k$-element subsets of $B$ in the same manner as above, 
we can deduce that ${B\choose A}\circ \phi$ corresponds to all $l$-element subsets of 
$X$ and therefore is monochromatic.
\end{proof}

For a natural number $N$ let   $N^{[\leq j]}$ denote the collection of all at most $j$-element subsets of $\{1,\ldots,N\}.$ 
Note that $N^{[\leq j]}=\bigcup_{i=0}^j N^{[j]}$.
Let $m, r$ be natural numbers and 
 $k_1,\ldots,k_m$  be non-negative integers
and let 
\[
c: \prod_{i=1}^m N^{[\leq k_i]}\to \{1,2,\ldots,r\}
\]
be a colouring.
Given $B_i\subset N,$ $i=1,2,\ldots,m,$
we say that $c$ is {\em size-determined} on $(B_i)_{i=1}^m$ if
whenever $A_i, A_i'\subset B_i$ with $0\leq |A_i|=|A_i'|\leq k_i$ for $i=1,2,\ldots,m$, then
\[
c(A_1,\ldots,A_m)=c(A_1',\ldots, A_m').
\]

For $f\in \prod_{i=1}^m N^{[\leq k_i]}$, define $\supp(f)=\{i: f(i)\neq\emptyset\}$.
Given a natural number $d\leq m,$ let
$\left(\prod_{i=1}^m  N^{[\leq k_i]}\right)^{[d]}$ be the set of all sequences $(f_s)_{s=1}^d$
with $f_s\in \prod_{i=1}^m N^{[\leq k_i]}$ and 
$\max(\supp(f_s))<\min(\supp(f_{s+1}))$, for $s<d$. 
Then, more generally, if
\[
\chi: \left(\prod_{i=1}^m  N^{[\leq k_i]}\right)^{[d]}\to \{1,2,\ldots,r\}
\]
is a colouring and $B_i\subset N$ for $i=1,2,\ldots,m,$
we say that $\chi$ is {\em size-determined} on $(B_i)_{i=1}^m$ if whenever $(f_s)_{s=1}^d$ and $(g_s)_{s=1}^d$
are such that 
$\supp(f_s)=\supp(g_s),$ 
$|f_s(i)|=|g_s(i)|$ and
{$f_s(i), g_s(i)\subset B_i$} for every $s\leq d$ and $i\leq m$, then
\[\chi \left( (f_s)_{s=1}^d\right)=\chi \left( (g_s)_{s=1}^d\right).\]

At the end of this section, we will prove the following theorem, whose corollary  essentially  reduces Theorem \ref{ordfans} to Theorem \ref{Tstepup_d}.  
\begin{thm}\label{Tdifheights}
Let $m, r$ be natural numbers and let
$k_1,\ldots,k_m,  l_1,\ldots,l_m$ be non-negative integers such that $k_i\leq l_i$ for every $i=1,2,\ldots,m$. 
Then there exists $N$ such that for every colouring
\[
c: \prod_{i=1}^m N^{[\leq k_i]}\to \{1,2,\ldots,r\}
\]
there are $B_1,\ldots, B_m\subset N$   with $|B_i|=l_i$ such that $c$ 
is size-determined on $(B_i)_{i=1}^m$.
Denote by $S(m,k_1,\ldots,k_m,l_1,\ldots,l_m,r)$ the minimal such $N.$
\end{thm}

We are almost ready to  prove Theorem \ref{ordfans} that the class $\F_<$ is a Ramsey class. We will use
 Corollary \ref{dhe} to reduce the proof to an application of Theorem \ref{Tstepup_d}. 
Corollary \ref{dhe}  is a multidimensional version of Theorem \ref{Tdifheights}.
\begin{cor}\label{dhe}
Let $d, m, r$ be natural numbers and let
$k_1,\ldots,k_m,  l_1,\ldots,l_m$ be non-negative integers
such that $k_i\leq l_i$ for every $i=1,2,\ldots,m$.
 Then there exists $N$ such that for every colouring
\[
\chi: \left(\prod_{i=1}^m N^{[\leq k_i]}\right)^{[d]}\to \{1,2,\ldots,r\}
\]
there are $B_1,\ldots, B_m\subset N$   with $|B_i|=l_i$ such that $\chi$ 
is size-determined on $(B_i)_{i=1}^m$.
Denote by $S_d(k_1,\ldots,k_m,l_1,\ldots,l_m,m,r)$ the minimal such $N.$
\end{cor}
\begin{proof}
Let $\Gamma=\{\gamma=(\gamma(1),\ldots,\gamma(d+1))\in\mathbb{N}^{d+1}:\gamma(1)=1<\ldots<\gamma(d+1)=m\}$. To $\gamma\in \Gamma$ and $(A_1,\ldots, A_m)\in \prod_{i=1}^m  N^{[\leq k_i]},$ we associate 
 \[
 \gamma_{(A_{1},\ldots, A_{m})}=((A_1, \ldots, A_{\gamma(2)-1}),\ldots, (A_{\gamma(d)},\ldots, A_m)) \in \prod_{i=1}^m\left( N^{[\leq k_i]}\right)^{[d]},\] 
where for $1\leq i < j\leq m$ by $(A_i,\ldots, A_j)$ we mean the function supported on $[i,j]$ with the respective values $A_i,\ldots, A_j$.

Given $\chi: (\prod_{i=1}^m N^{[\leq k_i]})^{[d]}\to \{1,2,\ldots,r\},$ we define 
 $c: \prod_{i=1}^m N^{[\leq k_i]}\to \{1,2,\ldots,r\}^{\Gamma}$ by 
\[ c(A_1,\ldots, A_m)=(\chi(\gamma_{(A_1,\ldots, A_m)}))_{\gamma\in\Gamma}.
\]
Applying Theorem \ref{Tdifheights}, we get 
 $B_1,\ldots, B_m\subset N$   with $|B_i|=l_i$ such that $c$ is size-determined on $(B_i)_{i=1}^m$.
Since whenever $\gamma_{(A_1,\ldots,A_m)}=\gamma'_{(A_1,\ldots,A_m)}$ we have $c(A_1,\ldots,A_m)(\gamma)=c(A_1,\ldots,A_m)(\gamma'),$
it follows that also $\chi$ is size-determined on $(B_i)_{i=1}^m$.
\end{proof}

\begin{proof}[Proof of Theorem \ref{ordfans}]
Let $S\in \F_<$ be of height $k$ and width $d$, and let 
 $T\in \F_<$ be of height $l\geq k$  and width $m\geq d$ (so that ${T\choose S}\neq \emptyset$). Let $n=G_d(k,l,m,r)$ be as in 
Theorem \ref{Tstepup_d} 
and let $N=S_d(n,k,\ldots,k,l,\ldots,l,r)$ be as in Corollary \ref{dhe}. 
Let $U\in\F_<$ consists of $n$ branches of height $N.$ We will show that  $U$ works for $S,T$ and $r$ colours.

Let $a_1,\ldots, a_d$ and $c_1,\ldots,c_n$ be the  increasing enumerations of branches in $S$ and $U$ respectively.
Let $(a_j^i)_{i=0}^k$ be the increasing enumeration of the branch $a_j$, $j=1,\ldots,d$, 
  and let  
$(c_j^i)_{i=0}^N$  be the increasing enumeration of the branch $c_j$ for $j=1,\ldots,n.$

To each $f\in {U\choose S}$, we associate  $f^*=(p^f_i )_{i=1}^d\in \fin_k^{[d]}(n)$ such that 
\[
		\supp(p^f_i)=\{j:a^1_i\in f(c_j)\}
	\]
		and for $j\in \supp(p_i^f)$
\[
		p_i^f(j)=z 
		\ \iff \ f(c_j^N)=a_i^z.
\]

We moreover associate to $f$ a block sequence of functions $(F_i^f)_{i=1}^d\in (\prod_{j=1}^n  (c_j\setminus \{c_j^0\})^{[\leq k]})^{[d]}$ to fully code $f$ 
as follows. For $j\in\supp(p_i^f),$ we let
 \[ F_i^f(j)=\{\min\{c_j^y\in c_j :f(c_j^y)=a_i^x\} : 0\prec x\preceq  p^f_i (j)\},\]
where the $\min$  is taken with respect to the partial order on the fan $U$. By definition $p_i^f(j)=|F_i^f(j)|$
and since $f$ is onto, for each $i$ there is a $j$ such that $p_i^f(j)=k$. Therefore
if $f_1^*=f_2^*$, then $|F_i^{f_1}(j)|=|F_i^{f_2}(j)|$ for all $i,j.$




Similarly, to any $g\in{U\choose T}$, we associate $g^*\in\fin_l^{[m]}(n)$ and
$(F_i^g)_{i=1}^m\in (\prod_{j=1}^n  (c_j\setminus \{c_j^0\})^{[\leq l]})^{[m]}$.

Let $c: {U\choose S}\to \{1,\ldots, r\}$ be given. 
Let $c_0$ be a colouring of $(\prod_{j=0}^n  (c_j\setminus \{c_j^0\})^{[\leq k]})^{[d]}$ 
induced by 
$c$ via
 the injection $f\mapsto (F_i^f)_{i=1}^d$,  colouring elements 
not of the form $(F_i^f)_{i=1}^d$ in an arbitrary way. 
We first apply Corollary \ref{dhe} to find $C_j\subset c_j\setminus \{c_j^0\}$ of size $l$ for $j=1,\ldots,n$ such that $c_0$ 
is size-determined on $(C_j)_{j=1}^n$.  
It follows that the colouring $c^*:\fin_k^{[d]}(n)\to \{1,2,\ldots,r\}$ given by $c^*(f^*)=c(f)$ for $f\in {U\choose S}$ which satisfy
 $(F_i^f)_{i=1}^d\in (\prod_{j=1}^n C_j^{[\leq k]})^{[d]}$ 
 is well-defined. 
Second, we apply Theorem \ref{Tstepup_d} to obtain a block sequence $D=(d_j)_{j=1}^m$ in $\fin_l^{[m]}(n)$  such that 
 $\left<\bigcup_{\vec{i}\in P_{k+1}^l} T_{\vec{i}}(D)\right>^{[d]}_{P_k}$ is $c^*$-monochromatic.
 
 Let $g\in{U\choose T}$ be any epimorphism such that  $g^*=D$ and
 $(F_i^g)_{i=1}^m\in (\prod_{j=1}^n  C_j^{[\leq l]})^{[m]}.$ 
Then for every $h\in{T\choose S}$, we have  $(h\circ g)^*\in \left<\bigcup_{\vec{i}\in P_{k+1}^l} T_{\vec{i}}(D)\right>^{[d]}_{P_k}$ and $(F_i^{h\circ g})_{i=1}^d\in(\prod_{j=1}^n C_j^{[\leq k]})^{[d]}$.
Since  $\left<\bigcup_{\vec{i}\in P_{k+1}^l} T_{\vec{i}}(D)\right>^{[d]}_{P_k}$ is $c^*$-monochromatic,
we can conclude that ${T\choose S}\circ g$ is $c$-monochromatic.

\end{proof}

\begin{proof}[Proof of Theorem \ref{Tdifheights}]
Let $m, r$ be natural numbers and 
$k_1,\ldots,k_m,  l_1,\ldots,l_m$ be fixed non-negative integers such that $k_i\leq l_i$ for every $i=1,2,\ldots,m$. 
We proceed by a double  induction on $m'$ and on $k'\leq k_{m'}$, where at each step we apply Theorem \ref{RT}.
We prove the following statement:
Given $1\leq m'\leq m$ and  $0\leq k'\leq k_{m'}$, there exists $N$ such that for every colouring
\[
c: \prod_{i=1}^{m'-1} N^{[\leq k_i]}\times N^{[\leq k']}\to \{1,\ldots,r\}
\]
there are $B_1,\ldots, B_{m'}\subset N$   with $|B_i|=l_i$ such that $c$ 
is size-determined on $(B_i)_{i=1}^{m'}$. 

When $m'=1 $ and $k'=0$, there is nothing to prove.
Let $m'=1$  and assume that 
the statement of the theorem holds for $0\leq k'<k_1$, we will prove it for $k'+1$.
Let $N'=S(1,k',l_1,r)$ and $N=R(k'+1, N',r).$ 
Let
\[
c: N^{[\leq (k'+1)]} \to \{1,2,\ldots, r\}
\]
be a given colouring.
Consider the restricted colouring $d=c\restriction_{N^{[k'+1]}}$ and apply Theorem \ref{RT} to find $B\subset N$ of size $N'$ such that $B^{[k'+1]}$ is $d$-monochromatic. 
By the inductive hypothesis applied to $c\restriction{  B^{[\leq k']}}$, we obtain the desired $B_1$.

Suppose that the statement of the theorem is true for $m'-1$ and we shall prove it for $m'.$ 
When $k'=0$, simply take $N=S(m'-1, k_1,\ldots,k_{m'-1},l_1,\ldots,l_{m'-1},r)$. 
Assume that the result is true for $k'<k_{m'}$, and we will prove it for $k'+1$.
Set \[N'=S(m', k_1,\ldots,k_{m'-1}, k', l_1,\ldots,l_{m'-1}, l_{m'}, r)\] and
\[N''=S(m'-1,k_1,\ldots,k_{m'-1}, N',\ldots, N',r).\]
Denote by $\alpha$ the set of all colourings of $\prod_{i=1}^{m'-1} N''^{[\leq k_i]}$ with colours $1,\ldots,r$.
Let 
\[N=\max\{N'',R(k'+1,N',|\alpha|)\}.\] We will show that $N$ works. 

Let
\[
c:\left(\prod_{i=1}^{m'-1} N^{[\leq k_i]}\right)\times \left( N^{[\leq (k'+1)]}\right)\to \{1,2,\ldots, r\}
\]
be an arbitrary colouring.
For every $A\subset N$ of size $k'+1$, let $c_A$ be the colouring of $\prod_{i=1}^{m'-1} N^{''[\leq k_i]}
$ induced by $c$  and $A$ in the last coordinate, i.e.
\[
c_A(A_1,\ldots,A_{m'-1})=c(A_1,\ldots,A_{m'-1},A).
\]
Define a colouring
\[
d:N_{m'}^{[k'+1]}\to\alpha
\]
by $d(A)=c_A.$

By Theorem \ref{RT}, there is a  subset $B_{m'}'\subset N$ of size $N'$ such that $B_{m'}^{'[k'+1]}$ is $d$-monochromatic
 in a colour 
$c_0:\left(\prod_{i=1}^{m'-1} N^{''[\leq k_i]}\right)\to \{1,2,\ldots, r\}$.
Applying the induction hypothesis for $m'-1,k_1,\ldots,k_{m'-1},N',\ldots,N', r$, we obtain $B_i'\subset N_i''$ of size $N'$ for $i=1,2,\ldots,m'-1$ 
such that $c_0$ is size-determined on $(B'_i)_{i=1}^{m'-1}$.

Finally, we define
\[
b=c\restriction_{\left(\prod_{i=1}^{m'-1} B_i'^{[\leq k_i]}\right)\times\left(B_{m'}'^{[\leq k']}\right)}.
\]
Using the induction hypothesis for $m'-1,k_1,\ldots,k_{m'-1},k',l_1,\ldots,l_{m'},r$, we obtain $B_i\subset B_i'$ for $i=1,2,\ldots,m'$ such that $|B_i|=l_i$ and $b$ and therefore $c$ are size-determined on $(B_i)_{i=1}^{m'}$.

\end{proof}

\section{Gowers' Ramsey theorem for multiple operations}\label{mainRamsey}

In this section, we provide a proof by induction of our main Ramsey result, Theorem \ref{Tstepup_d}. In order to perform the induction, we generalize Tyros' notions of a type  and of a pyramid in $\fin_k(n)$ to sequences in $\fin_k^{[d]}(n)$.

Let $A=(a_i)_{i=1}^m$ be a block sequence in $\fin_1.$ We can identify each $a_i$ with 
the characteristic function $\chi(a_i)$ of its support. We define 
\[
\fin_k(A)=
\left\{\sum_{i=1}^m j_i\cdot\chi(a_i): j_i\in \{0,1,\ldots, k\} \ \& \ \exists i  \ (j_i=k) \right\}. 
\]
Let $\fin_k^{[d]}(A)$ denote the set of all block sequences in $\fin_k(A)$ of length $d$.

A function $\phi:\{1,\ldots, m\}\to \{1,\ldots,k\}$ is a {\em type  of length $m$ over $k$} if $\phi(i)\neq\phi(i+1)$ for  $i=1,2,\ldots,m-1,$
and for  some $i$, $\phi(i)=k$. Note that $\phi\in \fin_k(m)$.
Let $n$ be a natural number.
For every type $\phi$ of length $m\leq n$ over $k$ and every block sequence $B=(b_i)_{i=1}^{m}$ in $\fin_1(n),$ 
\[
{\rm{map}}(\phi, B)=\sum_{i=1}^m \phi(i)\cdot\chi(b_i)
\]
belongs to $\fin_{k}(n)$.

On the other hand, for every $p\in \fin_{k}(n),$ there exist a unique natural number $m\leq n,$ a type $\phi$ of length  $m$ over $k$, and a block sequence $B=(b_i)_{i=1}^{m}$ in $\fin_1(n)$ of length $m$ such that
\[
p={\rm{map}}(\phi,B).
\]
We call this $\phi$ the {\em type} of $p$ and denote it by $\tp(p)$.
We say that $p,q\in \fin_k$ are of the same type if $\tp(p)=\tp(q).$

Tyros \cite{T} used the following lemma about types to obtain his constructive proof of the finite version of Gowers' theorem.

\begin{lemma}[Tyros \cite{T}]\label{KT}
For every triple $k,m,r$ of  natural numbers, there exists $n$ such that
for every colouring $c:\fin_k(n)\to \{1,2,\ldots,r\},$  there is a block sequence $A$ of length $m$ in $\fin_1(n)$ such that any two elements in $\fin_k(A)$ of the same type have the same colour. 
\end{lemma}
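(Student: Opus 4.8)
The plan is to prove Lemma~\ref{KT} by iterating Gowers' finite theorem (Theorem~\ref{finG}) one "level" at a time, peeling off the coordinates where a type takes its values. The key observation is that an element $p\in\fin_k(A)$ is determined by its type $\tp(p)$ (a fixed combinatorial gadget once we decide which blocks of $A$ carry which values) together with the choice of block sequence obtained by refining $A$; so to make the colour depend only on the type, we must arrange that $c$ becomes insensitive to the actual choice of a block sequence realizing any given type. There are only finitely many types of length $\le m$ over $k$ --- call them $\phi_1,\dots,\phi_N$ --- and the strategy is to handle them one at a time, successively thinning the block sequence.

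First I would set up the reduction. Fix $k,m,r$. Enumerate all types of length $\le m$ over $k$ as $\phi_1,\dots,\phi_N$. We build a decreasing sequence of block sequences $A_0\supseteq A_1\supseteq\cdots\supseteq A_N$ in $\fin_1$ (meaning each $A_{j+1}$ is a block subsequence of $\langle A_j\rangle$, i.e.\ obtained by summing consecutive blocks of $A_j$), where $A_j$ has the property that for every $i\le j$, any two elements of $\fin_k(A_j)$ of type $\phi_i$ get the same $c$-colour; the final $A_N$ is then the desired $A$, after ensuring it has length $\ge m$. To pass from $A_j$ to $A_{j+1}$: given $A_j=(a_1,\dots,a_M)$ and the type $\phi=\phi_{j+1}$ of length $\ell$, the elements of $\fin_k(A_j)$ of type $\phi$ are in bijective correspondence with block sequences $(b_1,\dots,b_\ell)$ in $\fin_1$ drawn from $\langle A_j\rangle$, via $\mathrm{map}(\phi,\cdot)$. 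So $c$ restricted to type-$\phi$ elements induces a colouring of $\fin_1^{[\ell]}$-sequences over $A_j$; applying Gowers' finite theorem for $\fin_1$ (or rather its block-sequence version, which follows from Theorem~\ref{finG} by a routine compactness/iteration argument, or more simply Hindman-type results since $k=1$ here) yields a long block subsequence $A_{j+1}$ of $A_j$ on which this induced colouring is constant --- and crucially, passing to a block subsequence of $A_j$ preserves the already-achieved monochromaticity for types $\phi_1,\dots,\phi_j$, since $\fin_k(A_{j+1})\subseteq\fin_k(A_j)$ and type is preserved under this inclusion.

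The main obstacle, and the point requiring care, is the bookkeeping of lengths: each application of the level-$j$ step consumes blocks (we need the source block sequence long enough that the monochromatic subsequence still has length $\ge$ whatever the next step demands), so I would run the construction backwards --- first decide we want $A_N$ of length $\ge m$, then for each $j$ from $N$ down to $1$ compute, via the function $n$ furnished by Gowers' finite theorem, how long $A_{j-1}$ must be, and finally set $n$ to be the resulting length of $A_0$, the trivial length-$n$ block sequence $(\{1\},\{2\},\dots,\{n\})$ in $\fin_1(n)$. One must also check the bijection claim precisely: for a fixed type $\phi$ and a block sequence $B=(b_1,\dots,b_\ell)$, the element $\mathrm{map}(\phi,B)=\sum\phi(i)\chi(b_i)$ does have type exactly $\phi$ (this uses $\phi(i)\ne\phi(i+1)$ so that no two consecutive support-intervals merge, plus $\max$ value $=k$ somewhere), and conversely every type-$\phi$ element of $\fin_k(A_j)$ arises this way from a unique such $B$ over $\langle A_j\rangle$ --- this is exactly the uniqueness-of-type-decomposition statement recorded just before the lemma. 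Once these pieces are in place, $A:=A_N$ works: any two elements of $\fin_k(A)$ of the same type $\phi_i$ lie in $\fin_k(A)\subseteq\fin_k(A_i)$ and hence received the same colour at stage $i$.
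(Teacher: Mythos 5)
Your combinatorial skeleton is fine: the correspondence between the type-$\phi$ elements of $\fin_k(A_j)$ and the block sequences of length $\ell$ (the length of $\phi$) in $\fin_1(A_j)$, the persistence of what has already been homogenized when you pass to a further block subsequence, and the backwards computation of the lengths are all correct. The genuine gap is the pigeonhole you invoke at each stage. What you need there is: for every colouring of $\fin_1^{[\ell]}(A_j)$ with $r$ colours there is a long block subsequence $A_{j+1}$ of $A_j$ such that $\fin_1^{[\ell]}(A_{j+1})$ is monochromatic. For $\ell\geq 2$ this is exactly the finite Milliken--Taylor theorem (Theorem \ref{mt}), and it does not ``follow from Theorem \ref{finG} by a routine compactness/iteration argument'': Theorem \ref{finG} with $k=1$ is the finite unions (Folkman-type) theorem, a statement about colourings of single elements of $\fin_1(n)$, i.e.\ a one-dimensional statement; compactness only turns infinite statements into finite ones and does not raise the dimension, and the passage from dimension one to colourings of length-$\ell$ block sequences is precisely the content of Milliken--Taylor, which is proved by a nontrivial induction on the dimension on top of Hindman's theorem (or via idempotent ultrafilters), not by iterating the one-dimensional statement. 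Your parenthetical ``more simply Hindman-type results since $k=1$ here'' points to the same misreading: the relevant parameter at the stage of a type $\phi$ is its length $\ell$, which can be as large as $m$, not the value $k$.

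If you replace that step by an explicit appeal to Theorem \ref{mt} (which the paper itself uses as a black box), your type-by-type thinning does yield a correct proof of Lemma \ref{KT}. It then becomes a variant of the paper's route, which proves the more general Lemma \ref{lemmaTyros} with a \emph{single} application of Theorem \ref{mt}: colour each $B=(b_i)_{i=1}^m\in\fin_1^{[m]}(n)$ by the entire function $\phi\mapsto c(\mathrm{map}(\phi,(b_i)_{i=1}^{l_\phi}))$ on the finitely many types, take $n=MT_m(2m-1,\alpha)$ where $\alpha$ is the number of such functions, and keep the first $m$ blocks of the resulting homogeneous sequence of length $2m-1$, so that every canonical decomposition of length at most $m$ extends to a block sequence of length $m$ inside the homogeneous sequence. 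That version needs one use of Milliken--Taylor instead of one per type, but the essential ingredient --- the multidimensional block-sequence Ramsey theorem --- is the same in both arguments, and it is exactly the ingredient your write-up treats as routine.
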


We extend the notion of a type to sequences in $\fin_k^{[d]}.$ 
We say that $\phi=(\phi_1,\ldots,\phi_d)$ is a {\em type of length $m$ over $k$} if 
each $\phi_i$ is a type of length $m_i$ over $k$ such that 
 $\sum_{i=1}^d m_i=m$. 
For $\bar{p}=(p_1,\ldots,p_d)\in \fin_k^{[d]},$ we let the {\em type} of $\bar{p}$ be $(\tp(p_1),\ldots,\tp(p_d)).$

 Note that  
 given $d\leq n$ and  $A\in\fin_{1}^{[n]},$
there is a natural bijection between $\fin_k^{[d]}(A)$ and the set of pairs $(B,\phi),$ where $B\in \fin_1^{[m]}(A)$ and 
$\phi\in\prod_{i=1}^d \fin_k(m_i)$, for some $m_1,\ldots, m_d$ such that $\sum_{i=1}^d m_i=m$,  is 
  a type of length $m$ over $k$ for some  $m\leq n.$  As in the case of dimension~1, the $p\in \fin_{k}^{[d]}(A)$ that corresponds to $(B,\phi)$ will be denoted by ${\rm{map}}(\phi, B)$.

We will prove 
a multidimensional version of Lemma \ref{KT} using a finite
version of  the Milliken-Taylor theorem \cite{M,Ta}.

\begin{thm}[Milliken-Taylor]\label{mt}
Given natural numbers $m\geq d$ and $ r,$ there exists $n$ with the following property: For every finite block sequence
$A\in\fin_1$ of length at least $n$ and every colouring of $\fin_1^{[d]}(A)$ by $r$ colours
there exists $B\in \fin_1^{[m]}(A)$ such that $\fin_1^{[d]}(B)$ is monochromatic.
We denote the smallest such $n$ by $MT_d(m,r).$
\end{thm}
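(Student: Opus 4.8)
The plan is to deduce the finite version from the infinite Milliken--Taylor theorem by a standard compactness (König's lemma) argument, exactly as Theorem \ref{finG} is deduced from Theorem \ref{GG}. First I would recall the infinite statement: for every block sequence $A=(a_i)_{i\in\mathbb{N}}$ in $\fin_1$ and every finite colouring of $\fin_1^{[d]}(A)$, there is an infinite block subsequence $B$ of $A$ (meaning $B\in\fin_1^{[\omega]}(A)$) such that $\fin_1^{[d]}(B)$ is monochromatic. This is the original Milliken \cite{M} / Taylor \cite{Ta} result; note that since a block sequence in $\fin_1$ is essentially a sequence of pairwise disjoint nonempty finite subsets of $\mathbb{N}$, and taking sums along a block subsequence corresponds to taking unions of consecutive blocks, $\fin_1^{[d]}(A)$ is naturally identified with the set of $d$-element ``block sequences'' drawn from the sub-semigroup generated by $A$ under disjoint union; this is precisely the setting of the Milliken--Taylor theorem on finite unions.

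Next I would set up the compactness argument. Fix $d,m\ge d,r$. Suppose toward a contradiction that no $n$ works, i.e. for every $n$ there is a block sequence $A_n$ of length $n$ in $\fin_1$ and a colouring $c_n:\fin_1^{[d]}(A_n)\to\{1,\dots,r\}$ admitting no $B\in\fin_1^{[m]}(A_n)$ with $\fin_1^{[d]}(B)$ monochromatic. By normalizing (translating supports so that $A_n$ occupies an initial segment determined only by $n$, and noting that the conclusion is invariant under order-preserving relabelling of $\mathbb{N}$) we may assume the $A_n$ are nested: $A_{n+1}$ extends $A_n$. Build a finitely-branching tree whose nodes at level $n$ are the colourings of $\fin_1^{[d]}(A_n')$ for a fixed length-$n$ block sequence $A_n'$ that arise as restrictions of some $c_N$ with $N\ge n$; the tree is infinite (each $c_n$ contributes to every level $\le n$) and finitely branching (only $r^{|\fin_1^{[d]}(A_n')|}$ colourings at level $n$), so by König's lemma it has an infinite branch, which assembles into a colouring $c_\infty$ of $\fin_1^{[d]}(A_\infty)$ for an infinite block sequence $A_\infty$, with the property that each finite restriction $c_\infty\restriction\fin_1^{[d]}(A_n')$ equals some $c_N\restriction\fin_1^{[d]}(A_n')$.

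Now apply the infinite Milliken--Taylor theorem to $c_\infty$ to get an infinite block subsequence $B_\infty$ of $A_\infty$ with $\fin_1^{[d]}(B_\infty)$ monochromatic. Take the initial segment $B=(b_1,\dots,b_m)$ consisting of the first $m$ terms of $B_\infty$; then $\fin_1^{[d]}(B)\subseteq\fin_1^{[d]}(B_\infty)$ is $c_\infty$-monochromatic. Since $B$ uses only finitely much of $A_\infty$, there is $n$ with $B\in\fin_1^{[m]}(A_n')$ and $\fin_1^{[d]}(B)\subseteq\fin_1^{[d]}(A_n')$; choosing $N\ge n$ with $c_\infty$ agreeing with $c_N$ on $\fin_1^{[d]}(A_n')$ (after identifying $A_n'$ with the corresponding initial segment of $A_N$), we get that $\fin_1^{[d]}(B)$ is $c_N$-monochromatic, contradicting the choice of $c_N$. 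This establishes existence of a finite $n$, and one then lets $MT_d(m,r)$ be the least such $n$. The only genuinely nontrivial input is the infinite Milliken--Taylor theorem itself, which we are citing rather than proving; the bookkeeping to make the nesting and identification of initial segments precise is the main thing to be careful about, but it is entirely routine.
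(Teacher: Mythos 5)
Your argument is correct, but it is worth saying how it sits relative to the paper: the paper does not prove Theorem \ref{mt} at all --- it quotes the finite Milliken--Taylor theorem as known, citing \cite{M} and \cite{Ta} (which contain the infinite statement), and only uses the value $MT_d(m,r)$ as a black box. Your proposal supplies exactly the standard justification the authors leave implicit: deduce the finite form from the infinite Milliken--Taylor theorem by a K\"onig's lemma compactness argument, in the same spirit as the deduction of Theorem \ref{finG} from Theorem \ref{GG}. The argument goes through; the one place where your wording is looser than it should be is the normalization step. ``Translating supports'' does not literally turn an arbitrary block sequence into a fixed one, since the blocks $a_i$ may have different sizes; the clean statement is that the canonical bijection $\sum_{i\in S}\chi(a_i)\mapsto\chi_S$ identifies $\fin_1(A)$ with $\fin_1(n)$ for $A$ of length $n$, and (because $\max\supp p<\min\supp q$ holds for $p,q\in\fin_1(A)$ exactly when the corresponding index sets are separated) this identification carries $\fin_1^{[d]}(A)$, $\fin_1^{[m]}(A)$ and the passage from $B$ to $\fin_1^{[d]}(B)$ to their standard counterparts. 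With that phrasing, the whole colouring problem depends only on the length of $A$, the sequences $A_n'$ can indeed be taken nested, and the rest of your tree argument and the final contradiction with the badness of $c_N$ are fine. Note also that, being a compactness proof, it yields no bound on $MT_d(m,r)$, but the paper never needs one --- it only needs existence, which is what you establish.
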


\begin{lemma}\label{lemmaTyros}
Let $k$ and $d\leq m,$ and $r$ be natural numbers. Then there exists $n$ such that for every colouring
 $c: \fin_k^{[d]}(n)\to\{1,2,\ldots,r\}$, there is a block sequence $A$ in $\fin_1(n)$ of length $m$ such that any two elements in 
 $\fin_k^{[d]}(A)$  of the same type have the same colour. We denote the smallest such $n$ by $T_d(k,m,r).$
\end{lemma}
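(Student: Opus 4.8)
The plan is to reduce the multidimensional statement of Lemma~\ref{lemmaTyros} to the one-dimensional Lemma~\ref{KT} of Tyros together with the finite Milliken--Taylor theorem (Theorem~\ref{mt}), using the bijection between $\fin_k^{[d]}(A)$ and pairs $(B,\phi)$ where $B$ is a block sequence in $\fin_1$ refining $A$ and $\phi$ is a type. Concretely, I would first choose the parameters in the right order: set $m_0=MT_d(m,r')$ for a number of colours $r'$ to be determined, and then set $n=T_1(k,m_0,r)$, the bound produced by Lemma~\ref{KT} for the triple $(k,m_0,r)$ (here I am using the one-dimensional version, i.e. dimension $d=1$, so I will write $T(k,m_0,r)$ for the bound of Lemma~\ref{KT}). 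The idea is that Lemma~\ref{KT} first homogenizes over one-dimensional types, after which a colouring of $d$-dimensional block sequences inside $\fin_k(A)$ factors through a colouring of $d$-element block sequences of the underlying $\fin_1$-sequence (labelled by $d$-tuples of types), and then Milliken--Taylor finds a long sub-block-sequence on which that labelled colouring is constant in each type-coordinate.

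In more detail: given $c:\fin_k^{[d]}(n)\to\{1,\dots,r\}$, apply Lemma~\ref{KT} with parameters $k$, $m_0$, $r$ to obtain a block sequence $A'=(a'_i)_{i=1}^{m_0}$ in $\fin_1(n)$ such that any two elements of $\fin_k(A')$ of the same one-dimensional type receive the same $c$-value --- but wait, $c$ is defined on block sequences of length $d$, not on single elements, so the precise statement I need is a version of Lemma~\ref{KT} whose conclusion is that any two $d$-tuples in $\fin_k^{[d]}(A')$ whose components are pairwise of the same type get the same colour; obtaining this is exactly the first substantive step and it follows by iterating Lemma~\ref{KT} finitely many times (once to control the first coordinate, then passing to a sub-sequence and controlling the second, etc., each pass consuming a bounded number of blocks). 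Once I have $A'$ with this ``type-rigidity'' property, I define an auxiliary colouring $c^*$ on $\fin_1^{[d]}(A')$: for a block sequence $(b_1,\dots,b_d)$ in $\fin_1(A')$, fix once and for all a reference $d$-tuple of types $\bar\phi=(\phi_1,\dots,\phi_d)$ compatible with the block lengths --- actually the clean way is to enumerate all possible $d$-tuples of types that can arise (there are finitely many once we cap lengths, say $N$ of them, giving $r'=r^N$ colours) and let $c^*$ record the function sending each admissible type-tuple $\bar\phi$ to $c(\mathrm{map}(\bar\phi,(b_1,\dots,b_d)))$; by type-rigidity this depends only on the block sequence $(b_1,\dots,b_d)$, not on further data, so $c^*$ is well defined.

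Then Milliken--Taylor (Theorem~\ref{mt}) applied to $c^*$ with parameters $d$, $m$, $r'$, using $m_0=MT_d(m,r')$, yields a block sequence $B=(b_1,\dots,b_m)$ in $\fin_1(A')\subset\fin_1(n)$ with $\fin_1^{[d]}(B)$ monochromatic for $c^*$. I then claim this $B$ works for Lemma~\ref{lemmaTyros}: any two elements $p,q\in\fin_k^{[d]}(B)$ of the same type $\bar\phi$ arise as $p=\mathrm{map}(\bar\phi,\vec b)$ and $q=\mathrm{map}(\bar\phi,\vec b')$ for block sequences $\vec b,\vec b'\in\fin_1^{[d]}(B)$; since $c^*(\vec b)=c^*(\vec b')$ and both evaluate at the same coordinate $\bar\phi$, we get $c(p)=c(q)$. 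Finally I take $n=T_d(k,m,r)$ to be the least $n$ for which the above construction succeeds; since $B$ has length $m$ we are done, and the proof also records $T_d(k,m,r)\le T(k,MT_d(m,r^N),r)$ for the appropriate $N$. The main obstacle I anticipate is the first step --- correctly formulating and proving the $d$-fold iterated version of Tyros' Lemma~\ref{KT} so that type-rigidity holds simultaneously across all $d$ coordinates of a block sequence, and bookkeeping the (bounded) loss in length at each iteration so that the final parameters feed correctly into Milliken--Taylor; everything after that is a routine unwinding of the $\mathrm{map}(\cdot,\cdot)$ bijection.
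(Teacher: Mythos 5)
There is a genuine gap, and it sits exactly at the step you flag as the ``first substantive step.'' The statement you say you need --- a block sequence $A'$ such that any two elements of $\fin_k^{[d]}(A')$ whose coordinates are pairwise of the same type get the same $c$-colour --- \emph{is} Lemma \ref{lemmaTyros} itself (the type of a $d$-tuple is by definition the tuple of coordinate types), so everything after it is superfluous, and the proposed proof of it does not work: Lemma \ref{KT} homogenizes a colouring of \emph{single} elements of $\fin_k(n)$, whereas $c$ colours $d$-tuples jointly. You cannot ``control the first coordinate'' by one application of Lemma \ref{KT}, because the colour of $(p_1,\dots,p_d)$ depends on where the later coordinates sit, and there is no finite colouring of $p_1$ alone (with a number of colours independent of $n$) that records this dependence; handling the joint dependence requires a higher-dimensional pigeonhole, not an iteration of the one-dimensional lemma. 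A second, independent error appears in the reduction you build on top: an element of $\fin_k^{[d]}(B)$ of type $\bar\phi$ corresponds under $\mathrm{map}$ to a block sequence in $\fin_1(B)$ of length $l_{\bar\phi}=\sum_i l_{\phi_i}$, which ranges over all values from $d$ up to the length of $B$ --- not $d$. So your colouring $c^*$ on $\fin_1^{[d]}(A')$ does not even type-check against $\mathrm{map}(\bar\phi,(b_1,\dots,b_d))$ for types of total length $>d$, and Milliken--Taylor at dimension $d$ homogenizes the wrong family: monochromaticity of $\fin_1^{[d]}(B)$ says nothing about the length-$l$ block sequences with $d<l\le m$ that realize most types.

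For comparison, the paper bypasses Lemma \ref{KT} entirely and resolves both issues with a single application of Theorem \ref{mt} at dimension $m$ (the maximal relevant length), not $d$: it colours each $B\in\fin_1^{[m]}(n)$ by the \emph{function} $q_B$ on the finite set $\mathcal{T}$ of all $d$-dimensional types of length at most $m$, where $q_B(\phi)=c(\mathrm{map}(\phi,(b_i)_{i=1}^{l_\phi}))$ uses only the initial segment of $B$ of the appropriate length. Milliken--Taylor with parameters $(m,2m-d,\alpha)$ gives $A'$ of length $2m-d$ with $\fin_1^{[m]}(A')$ monochromatic for this enriched colouring, and $A$ is taken to be the initial segment of $A'$ of length $m$. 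The padding by $m-d$ extra blocks is what makes all type lengths $l_\phi\in[d,m]$ accessible: any block sequence of length $l_\phi$ inside $A$ extends to one of length exactly $m$ inside $A'$, and then $c(\bar p)=q_{A_1'}(\phi)=q_{A_2'}(\phi)=c(\bar q)$. If you want to salvage your outline, replace your step 1 and your dimension-$d$ Milliken--Taylor application by this single dimension-$m$ application to the type-indexed colouring; the idea of recording ``the colour as a function of the type'' is the right one, but it must be attached to block sequences long enough to realize every type, which is where the $2m-d$ bookkeeping enters.
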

 
\begin{proof}
Let $\mathcal{T}$ be the set of all types of sequences in $\fin_k^{[d]}$ of length at most $m$ and let $\alpha$ be the cardinality of the set $X$ of all colourings of  
$\mathcal{T}$ 
by $r$ colours. 
Let $n=MT_m(2m-d,\alpha)$   be as in Theorem \ref{mt} and let $c: \fin_k^{[d]}(n)\to\{1,2,\ldots,r\}$ be a colouring.
Let $q: \fin_1^{[m]}(n)\to\{1,\ldots, r\}^\mathcal{T}$ be  the colouring given by
  \[q(B)(\phi)=c({\rm{map}}(\phi,(b_i)_{i=1}^{l_\phi})),\]
  where $l_\phi$ denotes  length of the type $\phi$, $B=(b_i)_{i=1}^m\in\fin_1^{[m]}(n),$
  and $\{1,\ldots, r\}^\mathcal{T}$  denotes the set of functions from $\mathcal{T}$ to $\{1,\ldots, r\}$. 
By Theorem \ref{mt}, we can find a block sequence $A'$ 
of length $2m-d$
such that $\fin_1^{[m]}(A')$ is
 $q$-monochromatic and let $A$ be the initial segment of $A'$ of length $m.$ We will show that $A$ is as desired. Indeed, 
let $\bar{p}_1,\bar{p}_2\in \fin_k^{[d]}(A)$ be of the same type $\phi,$
and let $A_1, A_2$ be the block sequences in $\fin_1(A)$ for which $\bar{p}_1={\rm{map}}(\phi,A_1)$ and $\bar{p}_2={\rm{map}}(\phi,A_2)$.
Since $\phi$ has length between $d$ and  $m$
 and since $A$ is an initial segment of $A'\in\fin_k^{[2m-d]}(n),$ 
 we can choose  $A'_1, A'_2\in \fin_1^{[m]}(A')$
  such that $A_1$ is an initial segment of $A'_1$ and $A_2$ is an initial segment of $A'_2$.
  It follows that 
  \[c(\bar{p}_1)=q(A'_1)(\phi)=q(A'_2)(\phi)=c(\bar{p}_2).\]

\end{proof}

 Another piece needed for the induction in the proof of Theorem \ref{Tstepup_d} is a pair of
two lemmas capturing how $T_1$ commutes with $T_i$'s.
The proof of the  first lemma 
is an immediate calculation.

\begin{lemma}\label{L1}
\begin{enumerate}
\item If $1\leq j< l$ and $p\in\fin_l$, we have $T_j\circ T_1(p)=T_1\circ T_{j+1}(p)$ and $T_0\circ T_1=T_1\circ T_0$.
\item For $\vec{t}\in P_k$, $p\in\fin_l$, $T_{\vec{t}}\circ T_1(p)=T_1\circ T_{\vec{t}+1}(p),$  where $\vec{t}+1\in P_{k+1}$ is such that
\begin{align*}
(\vec{t}+1)(1) &=   0 \\
(\vec{t}+1)(x+1) &=     \vec{t}(x)+1 \text{ when } 1\leq x\leq k. 
\end{align*}  
\item If $\vec{t}=\vec{j} ^\frown \vec{i}$ with $\vec{j}\in P_{k-1}$ and $\vec{i}\in P_k^{l-1}$, then 
 $\vec{t}+1=\vec{j'} ^\frown \vec{i'}$ where $\vec{j'}\in P_{k}$ and $\vec{i'}\in P_{k+1}^{l}$.
\item If $\vec{j}\in P_k$ and $\vec{i}\in P^l_{k+1}$, then
\[ T_1\circ T_{\vec{j}}\circ T_{\vec{i}}(p)=T_{\vec{j'}}\circ T_{\vec{i'}}\circ T_1(p)\]
for some $\vec{j'}\in P_{k-1}$, $\vec{i'}\in P_k^{l-1}$.
\end{enumerate}

\end{lemma}


The  second lemma will easily follow from Lemma \ref{L1}
Let us recall the definitions of $P_k$ and $P^l_{k+1}$ from Introduction and observe that for any $1\leq k$
\[ \bigcup_{\vec{i}\in P_{k+1}^{k}} T_{\vec{i}}(B)=B.\] 


\begin{lemma}\label{lemmaTstepup}
Let $B=(b_s)_{s=1}^m$ be a block sequence 
in $\fin_l(n).$  
Then for $2\leq k\leq l,$ we have
\begin{equation*}\label{e1}
T_1\left<\bigcup_{\vec{i}\in P_{k+1}^{l}} T_{\vec{i}}(B)\right>_{P_{k}}=\left<\bigcup_{\vec{i}\in P_{k}^{l-1}} T_{\vec{i}}\circ T_1 (B)\right>_{P_{k-1}}.
\end{equation*} 
In particular, if $2\leq k$   and $k=l$
 \begin{equation*}\label{e2}
T_1\left<B\right>_{P_{k}}=\left<T_1 (B)\right>_{P_{k-1}}.
\end{equation*}
\end{lemma}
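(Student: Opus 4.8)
\textbf{Proof proposal for Lemma \ref{lemmaTstepup}.}

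The plan is to prove the first displayed identity by a double inclusion, arguing at the level of the generating sets and then checking that the partial-semigroup structure (the block condition on supports) is respected on both sides. The key combinatorial input is Lemma \ref{L1}, which says $T_j\circ T_1 = T_1\circ T_{j+1}$ on $\fin_k$, and more generally its iterate: for a tuple $\vec{i}=(i(1),\dots,i(k))\in P_k$ one gets $T_{\vec{i}}\circ T_1 = T_1\circ T_{\vec{i}^+}$, where $\vec{i}^+=(i(1),\dots,i(k))$ is reindexed as an element of $P_{k+1}$ shifted up by one coordinate — more precisely $T_{i(1)}\circ\cdots\circ T_{i(k)}\circ T_1 = T_1\circ T_{i(1)}\circ\cdots\circ T_{i(k)}$ read now as operators $\fin_{k+1}\to\fin_k$. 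One should first record this iterated commutation as the genuine engine of the proof; it is the ``immediate calculation'' of Lemma \ref{L1} applied $k$ times, but one has to be careful that at each stage the index $j<$ (current level) constraint is met, which it is because all indices appearing in a $P_k$-tuple are at most $k$.

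First I would take a typical generator of the left-hand side. An element of $\left<\bigcup_{\vec{i}\in P_{k+1}^{l}} T_{\vec{i}}(B)\right>_{P_{k}}$ has the form $\sum_{s=1}^m T_{\vec{t}_s}\circ T_{\vec{i}_s}(b_s)$ with $\vec{i}_s\in P_{k+1}^l$, $\vec{t}_s\in P_k$, and some $\vec{t}_{s_0}=\langle 0\rangle_{j=1}^k$. Applying $T_1$ and using that $T_1$ is additive on block sums (it acts coordinatewise and preserves the support-ordering), we get $\sum_{s=1}^m T_1\circ T_{\vec{t}_s}\circ T_{\vec{i}_s}(b_s)$. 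Now I push $T_1$ through $T_{\vec{t}_s}$ using the iterated Lemma \ref{L1}: $T_1\circ T_{\vec{t}_s} = T_{\vec{t}_s'}\circ T_1$ where $\vec{t}_s'\in P_{k-1}$ is obtained from $\vec{t}_s\in P_k$ by dropping one coordinate appropriately (and the zero tuple goes to the zero tuple, which takes care of the ``there is an $s$ with $\vec{t}_s'$ trivial'' requirement on the right). Then $T_1\circ T_{\vec{i}_s}$ is absorbed into an element of the form $T_{\vec{i}_s''}\circ T_1$ with $\vec{i}_s''\in P_k^{l-1}$ by the same commutation, and $T_1(b_s)$ is the corresponding term of the block sequence $T_1(B)$. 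This exhibits the image as a generator of the right-hand side. The reverse inclusion is entirely symmetric: every generator $\sum_s T_{\vec{t}_s}\circ T_{\vec{i}_s}\circ T_1(b_s)$ of the right-hand side is, by running the commutations backwards, equal to $T_1$ of a generator of the left-hand side. The ``in particular'' case is just $k=l$, where $P_{k+1}^{l}=P_{k+1}^{k}$ is trivial so $\bigcup_{\vec{i}}T_{\vec{i}}(B)=B$, and likewise $P_k^{l-1}=P_k^{k-1}$ is trivial, so both sides collapse to the stated identity.

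The main obstacle, and the only place requiring genuine care, is the bookkeeping of how the index tuples transform under the iterated commutation — precisely, verifying that $T_1\circ T_{\vec i}$ for $\vec i\in P_k$ equals $T_{\sigma(\vec i)}\circ T_1$ for the correct $\sigma(\vec i)\in P_{k-1}$, that $\sigma$ sends the trivial tuple to the trivial tuple (so the non-degeneracy clause in the definition of $\left<\cdot\right>$ transfers), and that $\sigma$ is a bijection $P_k\to P_{k-1}$ in the relevant range so that the two generating families match exactly rather than just one containing the other. One must also confirm the trivial but necessary facts that $T_1$ commutes with block-sums and that applying any $T_{\vec i}$ to a block sequence yields a block sequence (same supports), so the partial-semigroup operation on the image is the restriction of the one on $\fin_{k-1}$; these follow immediately from the coordinatewise definition of the $T_i$'s and the fact that they never enlarge supports. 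No compactness or Ramsey-theoretic input is needed here — this lemma is purely algebraic and is used afterwards to feed the induction in Theorem \ref{Tstepup_d}.
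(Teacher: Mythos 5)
Your overall plan---double inclusion, additivity of $T_1$ over block sums, and commuting $T_1$ through the compositions via Lemma \ref{L1}---is the same as the paper's, and the inclusion of $\left<\bigcup_{\vec{i}\in P_{k}^{l-1}} T_{\vec{i}}\circ T_1 (B)\right>_{P_{k-1}}$ into $T_1\left<\bigcup_{\vec{i}\in P_{k+1}^{l}} T_{\vec{i}}(B)\right>_{P_{k}}$ does work the way you indicate: moving the innermost $T_1$ outward turns each entry $a\geq 1$ into $a+1$ and leaves the entries $0$ unchanged (Lemma \ref{L1} together with $T_0\circ T_1=T_1\circ T_0$), so the zero tuple goes to the zero tuple and the non-degeneracy clause transfers. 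The gap is in the other inclusion, which is \emph{not} ``entirely symmetric''. There is no entrywise map $\sigma$ with $T_1\circ T_{\vec{i}}=T_{\sigma(\vec{i})}\circ T_1$: Lemma \ref{L1} lets $T_1$ pass $T_a$ only for $a\geq 2$ (becoming $T_{a-1}$) or trivially for $a=0$; it gets stuck at entries equal to $1$, since $T_1\circ T_1\neq T_b\circ T_1$ for any $b\neq 1$, and entries equal to $1$ do occur in $P_k$ and in $P_{k+1}^l$. Concretely, for $k=l=3$ and $\vec{t}=(0,1,3)\in P_3$, the map $T_1\circ T_{\vec{t}}$ is identically $0$ on $\fin_3$, whereas the coordinatewise ``decrement the nonzero entries, drop a zero'' recipe gives $T_{(0,2)}\circ T_1$, which sends the value $3$ to $1$; a correct representative is $T_{(1,1)}\circ T_1$, and $(1,1)$ is not obtained from $(0,1,3)$ by any coordinatewise rule. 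For the same reason your claim that $\sigma$ is a bijection $P_k\to P_{k-1}$ cannot be right ($|P_k|=(k+1)!>k!=|P_{k-1}|$), and no such bijection is needed: the two sides match as sets of composite operators applied to the $b_s$'s, not via a bijection of index tuples.

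What this direction actually requires---and what the paper does at this point---is a preliminary normalization: for a term $T_1\circ T_{\vec{t}_s}\circ T_{\vec{i}_s}(b_s)$ one first replaces the concatenated tuple $\vec{j}=\vec{t}_s{}^\frown\vec{i}_s\in P_l$ by a tuple $\vec{j}'$ with the same action in which all zero entries come first (possible because $T_0$ is the identity and the nonzero entries only move to later positions, preserving the constraint $\vec{j}'(x)\leq x$; tuples with no zero entry satisfy $T_{\vec{j}}(p)=0$ and are handled separately), and only then commutes $T_1$ inward, checking that the resulting length-$(l-1)$ tuple splits into a $P_{k-1}$-part and a $P_k^{l-1}$-part with the latter having no zero entries, and that the term with $\vec{t}_s$ the zero tuple produces a zero $P_{k-1}$-part so that the definition of $\left<\,\cdot\,\right>_{P_{k-1}}$ is met. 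Even after this normalization the entries equal to $1$ need a short rewriting argument rather than a single sweep of Lemma \ref{L1}. Since your proposal defers exactly this bookkeeping and asserts properties of it that are false, the gap sits at the crux of the lemma.
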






An element $c\in \fin_l$ is called a {\em pyramid of height $l$} if for some block sequence $A=(a_j)_{j=-(l-1)}^{j=l-1}$ in $\fin_1^{[2l-1]}$,
we have 
\[c=\sum_{j=-(l-1)}^{l-1} (l-|j|) \cdot \chi(a_{j}).\]

Observe that if $c$ is a pyramid of height $l$, 
 $\vec{i}\in P_l$  and $j$  is the number of zero entries in $\vec{i,}$ then $T_{\vec{i}}(c)$ is a pyramid of height $j.$ 
 Note that some of the ``steps'' in the pyramid $T_{\vec{i}}(c)$ may have disappeared and others may have become longer.
  If $k<l$ and $\vec{i}\in P_{k+1}^l$, then $T_{\vec{i}}(c)$ is a pyramid of height $k$.
In particular, for every $\vec{i}$ in $P_k$ or in $P_{k+1}^l,$ we have that 
\begin{equation}\label{eq}
T_{\vec{i}}(c)(\text{min }\supp (T_{\vec{i}}(c))) = 1 = T_{\vec{i}}(c) (\text{max }\supp (T_{\vec{i}}(c))) \tag{*}.
\end{equation}

Let $C=(c_i)_{i=1}^n$ be a block sequence of $n$ pyramids of height $k.$ For $p\in \left< C\right>_{P_k},$ let 
\[
\supp_C(p)=\{i: \supp(p)\cap\supp(c_i)\neq\emptyset\}.
\]
For $i\in \supp_C(p), $ we define $\he_i(p)=\max \{p(x):x\in \supp(p)\cap\supp(c_i) \}$, while for $i\notin \supp_C(p)$,  we let $\he_i(p)=0$.
Then for $1\leq i\leq n,$  $\he(p)(i)=\he_i(p)$  defines a function in            
$\fin_k(n)$.
For $\bar{p}=(p_1,\ldots,p_d)\in \fin_k^{[d]}(C),$ we let  $\he(\bar{p})=(\he(p_1),\ldots,\he(p_d))\in\fin_k^{[d]}(n).$ 


\begin{lemma}\label{def}
Let $C_1, C_2$ be two  block sequences  of $n$ pyramids of height $k$, let  $p\in\left< C_1 \right>_{P_{k}}$
and $q\in\left< C_2 \right>_{P_{k}}.$
Then 
$\he(T_1(p))=\he(T_1(q))$ iff 
$\tp(p)=\tp(q).$
\end{lemma}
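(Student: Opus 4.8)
The statement is a purely combinatorial assertion relating the ``height profile'' of $T_1(p)$ to the type of $p$, and the plan is to prove each direction by unwinding the definitions of pyramid, type, and the height function $\he$. First I would set up notation: write $p = \sum_{s=1}^{l} T_{\vec{t}_s}(c_{j_s})$ as an element of $\langle C_1\rangle_{P_k}$, where $c_{j_s}$ are the pyramids in $C_1$ indexed by an increasing sequence $j_1 < j_2 < \cdots < j_l$, and some $\vec{t}_s$ is the all-zero vector. Using the observation displayed as \eqref{eq}, each block $T_{\vec{t}_s}(c_{j_s})$ is itself a pyramid (of height equal to the number of zero entries of $\vec{t}_s$), hence has value $1$ at the min and max of its support and strictly larger values in the middle whenever its height exceeds $1$.

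\textbf{Key step: analyzing $\he(T_1(p))$.} The crucial point is that $T_1$ lowers every nonzero value by one, so on each pyramidal block $T_{\vec{t}_s}(c_{j_s})$ of height $h_s$, the operation $T_1$ kills exactly the coordinates where the block equals $1$ — that is, it erases the min and max of the support of that block (and any coordinate of value $1$ in between, but for a pyramid the value-$1$ set is precisely the two extreme arcs) and leaves behind a pyramid of height $h_s - 1$, supported on the ``interior.'' Consequently $\supp_{C_1}(T_1(p))$ consists of exactly those indices $j_s$ with $h_s \geq 2$, i.e. those blocks of height $\geq 2$, and for such an index $\he_{j_s}(T_1(p)) = h_s - 1$. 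Thus $\he(T_1(p))$, read as a function on $\{1,\dots,n\}$ with successive nonzero values at positions corresponding to the blocks, records exactly the sequence $(h_1 - 1, h_2 - 1, \dots)$ restricted to the entries that are positive. Now I would compare this with the type: by the definition of $\tp$, writing $p = \mathrm{map}(\phi, B)$ for a block sequence $B$ in $\fin_1$ and a type $\phi$, the values of $\phi$ along its support are exactly the ``local maxima pattern'' of $p$, which for $p$ built from pyramids is precisely the sequence $(h_1, h_2, \dots, h_l)$ with consecutive repetitions forbidden — but consecutive pyramidal blocks in $\langle C_1\rangle_{P_k}$ already yield strictly alternating heights after collapsing, so $\tp(p)$ is determined by and determines $(h_1,\dots,h_l)$.

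\textbf{Putting it together and the main obstacle.} Given the two computations, $\he(T_1(p))$ is a function of $(h_1-1, \dots, h_l-1)$ (with zeros dropped and the block-index positions forgotten, since $\he(T_1(p))$ already ignores the actual positions of supports within $\{1,\dots,n\}$ — wait, it does not ignore them). Here is where I expect the real subtlety: $\he(T_1(p))$ as an element of $\fin_k(n)$ \emph{does} remember at which indices $i \in \{1,\dots,n\}$ the nonzero heights occur, whereas $\tp(p)$ forgets this and remembers only the abstract sequence of values. So to get the ``only if'' direction cleanly, I need that the indices surviving in $\supp_{C_1}(T_1(p))$ together with their heights encode the same data as the reduced value-sequence — which works because the positions in $\{1,\dots,n\}$ are recovered from which $c_{j}$'s are hit, and the type only needs the value pattern. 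For the forward direction ($\tp(p) = \tp(q) \Rightarrow \he(T_1(p)) = \he(T_1(q))$) I must check that two elements of the \emph{same} type, possibly living over different pyramid sequences $C_1$ and $C_2$, still produce the same $\he\circ T_1$; here the point is that $\he_i$ is defined via the fixed index set $\{1,\dots,n\}$ common to both $C_1$ and $C_2$, and same type forces the same blocks $j_1 < \cdots < j_l$ to be used (the type records, implicitly through $\mathrm{map}(\phi, B)$ and the shared ambient $\{1,\dots,n\}$, exactly which positions carry which heights). The main work — and the only place a careless argument could go wrong — is verifying that, for elements of $\langle C\rangle_{P_k}$ built from pyramids, \emph{consecutive} blocks after the $T_{\vec{t}_s}$ collapse never accidentally have equal heights, so that the ``no consecutive repeats'' condition in the definition of type is automatically consistent with the height data; this follows because a block of the form $T_{\vec{t}_s}(c_{j_s})$ and the adjacent one $T_{\vec{t}_{s+1}}(c_{j_{s+1}})$ occupy disjoint supports and their heights are independent parameters, but after applying $T_1$ and reading $\he$, any equal adjacent heights simply merge into a single ``run'' — and one checks the type does the same merging, so both sides agree. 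I would therefore organize the proof as: (i) normalize $p$ and $q$ to pyramidal-block form; (ii) compute $\supp$ and $\he$ of $T_1$ applied to a single pyramidal block; (iii) assemble to get $\he(T_1(p))$ as an explicit function; (iv) compare with the explicit description of $\tp(p)$ via $\mathrm{map}$; (v) conclude both implications. The computations in (ii)–(iv) are routine once \eqref{eq} is in hand; step (iv), matching the bookkeeping of ``which index in $\{1,\dots,n\}$ carries which height'' between $\he$ and $\tp$, is the delicate one.
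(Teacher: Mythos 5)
Your computation of $\he(T_1(p))$ is correct: writing $p$ as a sum of pyramidal bumps of heights $h_1,\ldots,h_l$ located at pyramid indices $j_1<\cdots<j_l$, the function $\he(T_1(p))$ records exactly those indices $j_s$ with $h_s\geq 2$, with value $h_s-1$ there. The gap is in how you pass from this to types. First, your description of $\tp(p)$ is wrong: the type is not the height sequence $(h_1,\ldots,h_l)$ but the full run-value sequence, i.e.\ the zigzag $1,2,\ldots,h_1,\ldots,2,1,2,\ldots,h_2,\ldots,2,1,\ldots$ in which the value-$1$ runs at each junction merge (by equation (\ref{eq}) each bump is itself a pyramid, so its run-values are $1,\ldots,h_s,\ldots,1$). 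Consequently a bump of height $1$ is invisible in the type, so $\tp(p)$ determines only the ordered subsequence of those $h_s$ that are $\geq 2$, not $(h_1,\ldots,h_l)$; also your worry about consecutive blocks ``accidentally having equal heights'' is a non-issue (two adjacent bumps of equal height are perfectly fine, since the merged $1$-run separates their peaks in the zigzag). Second, and more seriously, your argument for the direction $\tp(p)=\tp(q)\Rightarrow\he(T_1(p))=\he(T_1(q))$ rests on the claim that ``same type forces the same blocks $j_1<\cdots<j_l$ to be used''. That is false: the type carries no positional information at all. Indeed this implication itself fails, so no argument can repair it: for $k\geq 2$ take $C_1=C_2=C$, $p=c_1$, $q=c_2$; then $\tp(p)=\tp(q)=(1,2,\ldots,k,\ldots,2,1)$, while $\he(T_1(p))$ is supported at index $1$ and $\he(T_1(q))$ at index $2$. (The ``iff'' is an overstatement in the paper's own formulation; its one-line proof and the application in the proof of Theorem \ref{Tstepup_d} use only the implication $\he(T_1(p))=\he(T_1(q))\Rightarrow\tp(p)=\tp(q)$.)

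For that remaining, true implication your proposal is close but not complete. From $\he(T_1(p))=\he(T_1(q))$ you recover only the positions and the heights of the bumps of height $\geq 2$; to conclude $\tp(p)=\tp(q)$ you must invoke exactly the point your write-up misses, namely that the type is the zigzag determined by the ordered list of heights $\geq 2$, with height-$1$ bumps and the actual positions in $\{1,\ldots,n\}$ irrelevant. This is what the paper's proof extracts from its observation that $\he_i(T_1(p))>0$ iff $\he_i(p)=\he_i(T_1(p))+1$, while $\he_i(T_1(p))=0$ iff $\he_i(p)\in\{0,1\}$, combined with (\ref{eq}). So the correct course is: prove the needed implication by (i) your computation of $\he(T_1(p))$, and (ii) the correct description of $\tp(p)$ as the merged zigzag of the heights $\geq 2$; and either drop the converse or note explicitly that it is false as stated.
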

\begin{proof}
Observe that $i\in\supp_{C_1}(T_1(p))$ iff $\he_i(T_1(p))>0$ iff  $\he_i(p)=\he_i(T_1(p))+1,$ while $i\notin\supp_{C_1}(T_1(p))$ iff $\he_i(p)=0$ or $\he_i(p)=1$. 
Analogously 
for $q$.   
The statement  therefore follows by equation (\ref{eq}). 
\end{proof}


The following lemma shows that if the statement of Theorem \ref{Tstepup_d} holds then it remains true after  replacing $\fin_k(n)$ by the partial semigroup generated by a block sequence of $n$  pyramids of height $k$. It will be essential for the induction in the proof of Theorem \ref{Tstepup_d}.

\begin{lemma}\label{followup_d}
Suppose that 
\begin{enumerate}
\item[(**)]
for every $1\leq k,$ every $d,$ every $m\geq d,$ every $ l\geq k,$ and every $r$, there exists $n$ such that for every colouring $c:\fin^{[d]}_k(n)\to\{1,2,\ldots,r\}$ there is a block sequence $B$ in $\fin_l(n)$ of length $m$ such that $\left< \bigcup_{\vec{i}\in P_{k+1}^l}T_{\vec{i}} (B)\right>_{P_k}^{[d]}$ is 
$c$-monochromatic.
\end{enumerate}

Let $C=(c_i)_{i=1}^n$ be a block sequence of $n$ pyramids of height $l$. Then for every colouring $e:\left<\bigcup_{\vec{i}\in P_{k+1}^l}T_{\vec{i}} (C)\right>_{P_k}^{[d]}\to\{1,2,\ldots,r\}$    
satisfying that  
$\he(\bar{p})=\he(\bar{q})$ implies 
$e(\bar{p})=e(\bar{q})$, 
there is a block sequence $D\in\left<C\right>_{P_l}^{[m]}$ 
such that $\left<\bigcup_{\vec{i}\in P_{k+1}^l}T_{\vec{i}} (D)\right>_{P_k}^{[d]}$ is $e$-monochromatic.
\end{lemma}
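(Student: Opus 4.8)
The idea is to transfer a colouring $e$ of the length-$d$ block sequences inside $\left<\bigcup_{\vec{i}\in P_{k+1}^l}T_{\vec{i}}(C)\right>_{P_k}^{[d]}$ to a colouring $c$ of $\fin_k^{[d]}(n)$, apply the hypothesis to $c$, and pull the resulting block sequence back through $C$. The hypothesis $\he(\bar p)=\he(\bar q)\Rightarrow e(\bar p)=e(\bar q)$ is exactly what is needed to make this transfer well-defined. Concretely, for a block sequence $B=(b_s)_{s=1}^m\in\fin_l^{[m]}(n)$, I want to produce a block sequence $C\cdot B$ (or rather a block sequence inside $\left<C\right>_{P_l}$) by substituting the $j$-th pyramid block of $C$ in place of the $j$-th coordinate; more precisely, $T_{\vec i}(c_j)$ should play the role of the coordinate value $i$-shift applied to the $j$-th generator. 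The key computation, already recorded in the excerpt via equation~(\ref{eq}) and Lemma~\ref{def}, is that applying $T_1$ to an element of $\left<C\right>_{P_k}$ and reading off $\he\circ T_1$ recovers precisely the \emph{type}; more to the point here, the map $\he:\left<\bigcup_{\vec i}T_{\vec i}(C)\right>_{P_k}^{[d]}\to\fin_k^{[d]}(n)$ is surjective onto all of $\fin_k^{[d]}(n)$, because a block sequence of $n$ pyramids of height $k$ has, inside the semigroup it generates, elements realising every prescribed height-profile. This surjectivity, together with the hypothesis on $e$, lets me define $c:\fin_k^{[d]}(n)\to\{1,\dots,r\}$ by $c(\bar h)=e(\bar p)$ for any $\bar p$ with $\he(\bar p)=\bar h$.

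\textbf{Carrying it out.} First I would fix $n=G_d(k,l,m,r)$ from the hypothesis (the number asserted to exist), and take $C=(c_i)_{i=1}^n$ a block sequence of $n$ pyramids in $\fin_l$ as given. Second, I would check carefully that $\he$ restricted to $\left<\bigcup_{\vec i\in P_{k+1}^l}T_{\vec i}(C)\right>_{P_k}^{[d]}$ maps \emph{onto} $\fin_k^{[d]}(n)$, and more importantly that it is compatible with the semigroup structure: if $D'=(d'_s)_{s=1}^m$ is a block sequence inside $\left<C\right>_{P_l}$ obtained from a block sequence $B\in\fin_l^{[m]}(n)$ by the substitution indicated above, then the semigroup $\left<\bigcup_{\vec i}T_{\vec i}(D')\right>_{P_k}^{[d]}$ maps under $\he$ exactly onto $\left<\bigcup_{\vec i}T_{\vec i}(B)\right>_{P_k}^{[d]}$. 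This is the heart of the lemma and is where the pyramid structure (property~(\ref{eq}): every $T_{\vec i}(c_j)$ begins and ends with value $1$) does the work, ensuring that supports of consecutive substituted blocks stay separated and that heights add correctly rather than merge. Third, with $c$ defined as above, apply the hypothesis to get $B\in\fin_l^{[m]}(n)$ such that $\left<\bigcup_{\vec i}T_{\vec i}(B)\right>_{P_k}^{[d]}$ is $c$-monochromatic. Fourth, let $D$ be the corresponding length-$m$ block subsequence of $\left<C\right>_{P_l}$. For any $\bar p\in\left<\bigcup_{\vec i}T_{\vec i}(D)\right>_{P_k}^{[d]}$ we have $\he(\bar p)\in\left<\bigcup_{\vec i}T_{\vec i}(B)\right>_{P_k}^{[d]}$ by step~two, hence $e(\bar p)=c(\he(\bar p))$ is constant, so $D$ is as required.

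\textbf{Main obstacle.} The routine-looking but genuinely delicate point is step~two: verifying that the substitution operation $B\mapsto D'$ intertwines the two semigroup constructions, i.e.\ that
\[
\he\!\left(\sum_{s=1}^m T_{\vec t_s}\circ T_{\vec i_s}(d'_s)\right)=\sum_{s=1}^m T_{\vec t_s}\circ T_{\vec i_s}(b_s),
\]
and that as the $\vec t_s,\vec i_s$ and the choice of block subsequence of $D$ range over all admissible values, the left-hand elements range over all of $\left<\bigcup_{\vec i}T_{\vec i}(D)\right>_{P_k}^{[d]}$ and the right-hand elements over all of $\left<\bigcup_{\vec i}T_{\vec i}(B)\right>_{P_k}^{[d]}$. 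The subtlety is bookkeeping: one must keep track of how the $\he$ operator interacts with the tetris-type maps $T_{\vec t}\circ T_{\vec i}$ on a pyramid, using the pyramid profile to argue that $\he_i(T_{\vec t_s}\circ T_{\vec i_s}(c_i))$ equals the corresponding coordinate of $T_{\vec t_s}\circ T_{\vec i_s}(b_s)$ when $b_s$ is built with the matching height-profile, and to confirm that no collapse of distinct blocks occurs — this is where condition~(\ref{eq}) is indispensable. I would isolate this as a short sublemma (the analogue of Lemma~\ref{def} at the level of full block sequences and the extended semigroups) and prove it by direct computation, then the rest of the argument is a clean reduction.
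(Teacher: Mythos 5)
Your proposal is correct and takes essentially the same approach as the paper: the paper transfers $e$ to a colouring $c$ of $\fin_k^{[d]}(n)$ via the canonical section $\bar q\mapsto\bar q^{\,C}=\bigl(\sum_{i\in\supp(q_t)}T_1^{l-q_t(i)}(c_i)\bigr)_{t}$ of the height map (your arbitrary-preimage definition of $c$ is equivalent, by the hypothesis on $e$), then applies the hypothesis, substitutes pyramids back to form $D$, and verifies exactly your key identity $\he\bigl(\sum_s T_{\vec{t}_s}T_{\vec{i}_s}(d_s)\bigr)=\sum_s T_{\vec{t}_s}T_{\vec{i}_s}(b_s)$, stated there as $\he(\bar q)=\he(\bar p^{\,C})$. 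One small slip to correct: the pyramids in $C$ have height $l$ (the operations $T_{\vec{i}}$ with $\vec{i}\in P_{k+1}^l$ bring them down to height $k$), not height $k$ as written in your plan.
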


\begin{proof}
Let $C$ and $e$ be as in the statement of the theorem.
For $1\leq j\leq l,$ we define a one-to-one semigroup homomorphism
\[\iota_j:\fin_j(n)\to \left<\bigcup_{\vec{i}\in P_{j+1}^l} T_{\vec{i}}(C)\right>_{P_j}
\] by

\[q\mapsto \sum_{i\in \supp(q)} T_1^{l-q(i)}(c_i),\]
where $T^{l-q(i)}_1$ denotes the $(l-q(i))$-th iterate of $T_1.$ 
We naturally extend $\iota_j$ to $\iota_j^{[d]}$ on $\fin_j^{[d]}(n)$ by $\iota_j^{[d]}(q_i)_{i=1}^{d}=(\iota_j(q_i))_{i=1}^d.$

Let $c:\fin_k^{[d]}(n)\to \{1,2,\ldots,r\}$ be the colouring $e\circ \iota_k^{[d]}.$ By the hypothesis~(**), we can find a block sequence $B$ in $\fin_l^{[m]}(n)$ such that $\left< \bigcup_{\vec{i}\in P_{k+1}^l}T_{\vec{i}} (B)\right>_{P_k}^{[d]}$ is $c$-monochromatic in a color $\alpha$. Define $D=\iota_l^{[m]}(B)\in\left<C\right>^{[m]}_{P_l}$. 

It is easy to see that $\he(\iota_j(q))=q$ and that 
$\iota_{j-1} T_{r}(q)=T_r\iota_{j}(q)$ for $1\leq r\leq j$ and $q\in\fin_j(n)$.
It implies that whenever $\bar{p}\in \left<\bigcup_{\vec{i}\in P_{k+1}^l}T_{\vec{i}} (D)\right>_{P_k}^{[d]}$, 
$\he(\bar{p})\in   \left< \bigcup_{\vec{i}\in P_{k+1}^l}T_{\vec{i}} (B)\right>_{P_k}^{[d]},$ 
and therefore $e(\iota_k^{[d]}(\he(\bar{p})))=\alpha,$  but 
$\he(\iota_k^{[d]}(\he(\bar{p})))=\he(\bar{p})$, so also $e(\bar{p})=\alpha$.

\end{proof}



For $\bar{p}=(p_1,\ldots, p_d)\in \fin_k^{[d]}$, we define $T_1(\bar{p})$ to be $(T_1(p_1),\ldots, T_1(p_d))$.
{We are now ready to prove Theorem \ref{Tstepup_d}. Some of the ideas used in the proof also appeared in
 the proof of Theorem 1 in \cite{T}.}

\begin{proof}[Proof of Theorem \ref{Tstepup_d}]
We proceed by induction on $k.$ For $k=1$ and $d,m\geq d, l\geq k, r$ arbitrary, let 
$n=MT_d(m,r).$
 Suppose that $c:\fin^{[d]}_1(n)\to\{1,2,\ldots,r\}$ is an arbitrary colouring. 
By Theorem \ref{mt}, we can find a block sequence $A=(a_s)_{s=1}^m\in\fin_1^{[m]}(n)$ 
such that $\fin_1^{[d]}(A)$ is $c$-monochromatic.
 We define $B=(b_s)_{s=1}^m$ by $b_s=l\cdot \chi(a_s),$ so that $T_{\vec{i}}(b_s)=a_s$ for every $s$ and $\vec{i}\in P_2^l$. Then 
 \[ \left<\bigcup_{\vec{i}\in P_2^l}T_{\vec{i}}(B)\right>^{[d]}_{P_1} = \left< A \right>_{P_1}^{[d]}=\fin_1^{[d]}(A)\] 
is $c$-monochromatic.

Now, we assume that  the theorem holds for $k-1$ and we shall prove it for $k.$ Let $n'= G_d(k-1,l-1,m,r)$ be given by the induction hypothesis and let
 $n=T_d(k,n'(2l-1),r)$ be as in Lemma \ref{lemmaTyros}.

Let $c:\fin^{[d]}_k(n)\to \{1,2,\ldots,r\}$ be a given colouring. By Lemma \ref{lemmaTyros}, 
we can find a sequence $A$ in $\fin_1(n)$ of length $n'(2l-1)$ such that 
any two elements in $\fin_k^{[d]}(A)$  of the same type have the same colour. 
Let  $C=(c_i)_{i=1}^{n'}$ be the block sequence of $n'$ pyramids in $\fin_l(A),$ i.e.
\[
c_i=\sum_{j=-(l-1)}^{l-1} (l-|j|) \cdot \chi(a_{q_i+j}),
\]
where $q_i=(i-1)(2l-1)+l.$

Suppose that $\bar{p},\bar{q}\in \left< \bigcup_{\vec{i}\in P_{k+1}^{l}}T_{\vec{i}}(C)\right>_{P_{k}}^{[d]}$ 
are such that $\he(T_1(\bar{p}))=\he(T_1(\bar{q}))$. 
  Then $\tp(\bar{p})=\tp(\bar{q})$ by Lemma \ref{def} and consequently $c(\bar{p})=c(\bar{q})$ by the choice of $C$. Therefore
the colouring 
\[
c':T_1\left< \bigcup_{\vec{i}\in P_{k+1}^{l}}T_{\vec{i}}(C)\right>_{P_{k}}^{[d]} \to\{1,2,\ldots,r\},
\]
given by $c'(T_1(\bar{p}))=c(\bar{p}),$ is well-defined.

By  Lemma \ref{lemmaTstepup},
\[
T_1\left< \bigcup_{\vec{i}\in P_{k+1}^{l}}T_{\vec{i}}(C)\right>_{P_{k}} = \\
\left< \bigcup_{\vec{i}\in P_{k}^{l-1}}T_{\vec{i}}\circ T_1(C)\right>_{P_{k-1}}.
\]
Therefore 
$c' $ and the sequence of pyramids $T_1(C)$  satisfy the hypothesis of Lemma \ref{followup_d}. 

Applying the induction hypothesis together with Lemma \ref{followup_d}
we can find a block sequence
 $B'=(b'_s)_{s=1}^m$ in $\left< T_1(C)\right>_{P_{l-1}}$ such that $\left< \bigcup_{\vec{i}\in P_{k}^{l-1}}T_{\vec{i}} (B')\right>_{P_{k-1}}^{[d]}$
 is $c'$-monochromatic, say in a 
colour~$\alpha.$ 
By  Lemma \ref{lemmaTstepup}, 
there is a block sequence $B$ in $\left< C \right>_{P_l}^{[m]}$ such that $T_1(B)=B'.$ If 
$\bar{b}\in \left< \bigcup_{\vec{i}\in P_{k+1}^{l}}T_{\vec{i}}(B)\right>_{P_k}^{[d]},$
then
 $T_1(\bar{b})\in \left< \bigcup_{\vec{i}\in P_{k}^{l-1}} T_{\vec{i}}(B')\right>_{P_{k-1}}^{[d]}$,
 so $c(\bar{b})=c'(T_1(\bar{b}))=\alpha$. We can conclude that $B$ is as required.

\end{proof}
As stated in Section \ref{resultsRamsey}, setting $k=l$ and $d=1$ in Theorem \ref{Tstepup_d}, we obtain Corollary \ref{Gowers}, a generalisation of the finite version of  Gowers'  Theorem from the tetris operation $T_1$ to all the operations $T_i$.
Setting $k=l$ and letting $d$ be arbitrary in Theorem \ref{Tstepup_d}, we obtain the following generalisation of the finite version of  
the 
Milliken-Taylor theorem for $\fin_k$ (see \cite{To}, Corollary 5.26). 

\begin{cor}\label{T2}
Let $k,m,r$ and $d$ be natural numbers. Then there exists a natural number $n$ such that for every colouring $c:\fin^{[d]}_k(n)\to \{1,2,\ldots,r\}$  there is a block sequence $B$ of length $m$ in $\fin_k(n)$ such that $\left< B \right>_{P_k}^{[d]}$ is $c$-monochromatic.
\end{cor}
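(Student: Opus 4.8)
The plan is to derive Corollary~\ref{T2} as an immediate special case of Theorem~\ref{Tstepup_d}. Recall that Theorem~\ref{Tstepup_d} produces, for parameters $d, m\geq d, l\geq k, r$, a number $n$ such that every $r$-colouring of $\fin_k^{[d]}(n)$ has a monochromatic $\left<\bigcup_{\vec{i}\in P_{k+1}^l}T_{\vec{i}}(B)\right>_{P_k}^{[d]}$ for some block sequence $B$ of length $m$ in $\fin_l(n)$. The statement to prove is the same assertion but with the simpler target set $\left<B\right>_{P_k}^{[d]}$ and with $B$ living in $\fin_k(n)$ rather than $\fin_l(n)$. So the whole content is to observe that the choice $l=k$ collapses the more complicated semigroup to the simpler one.

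Concretely, first I would set $l=k$ in Theorem~\ref{Tstepup_d}, keeping $d, m, r$ as given (the hypothesis $m\geq d$ and $l\geq k$ are satisfied since $l=k$). This yields a natural number $n = G_d(k,k,m,r)$ with the stated Ramsey property. Next I would invoke the identity already recorded in the excerpt just before Lemma~\ref{lemmaTstepup}, namely that $\bigcup_{\vec{i}\in P_{k+1}^k}T_{\vec{i}}(B) = B$ — this holds because $P_{k+1}^k$ was \emph{defined} to contain only the constant sequence $\left<0\right>$, and $T_{\left<0\right>}$ is the identity. Consequently $\left<\bigcup_{\vec{i}\in P_{k+1}^k}T_{\vec{i}}(B)\right>_{P_k}^{[d]} = \left<B\right>_{P_k}^{[d]}$, and moreover $B\in\fin_k(n)^{[m]}$ rather than merely $\fin_l(n)^{[m]}$. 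Feeding a given colouring $c:\fin_k^{[d]}(n)\to\{1,\dots,r\}$ into Theorem~\ref{Tstepup_d} then returns exactly a block sequence $B$ of length $m$ in $\fin_k(n)$ with $\left<B\right>_{P_k}^{[d]}$ monochromatic, which is the conclusion of Corollary~\ref{T2}.

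There is essentially no obstacle here: the corollary is a specialization, and the only thing to be careful about is matching the definitions (that $P_{k+1}^k=\{\left<0\right>\}$ and that block sequences in $\fin_k$ are in particular block sequences in $\fin_l$ when $l=k$). If one wanted a self-contained phrasing one could also note that this simultaneously generalizes Corollary~\ref{Gowers} (taking $d=1$) and the finite Milliken--Taylor theorem for $\fin_k$ (taking $k$ arbitrary but recovering the classical statement), but for the proof itself the single substitution $l=k$ suffices.

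\begin{proof}[Proof of Corollary~\ref{T2}]
Apply Theorem~\ref{Tstepup_d} with $l=k$; the hypotheses $m\geq d$ and $l\geq k$ hold. This gives a natural number $n=G_d(k,k,m,r)$ such that for every colouring $c:\fin_k^{[d]}(n)\to\{1,2,\ldots,r\}$ there is a block sequence $B$ of length $m$ in $\fin_k(n)$ with $\left<\bigcup_{\vec{i}\in P_{k+1}^{k}}T_{\vec{i}}(B)\right>_{P_k}^{[d]}$ being $c$-monochromatic. Since $P_{k+1}^k=\{\left<0\right>\}$ and $T_{\left<0\right>}=\id$, we have $\bigcup_{\vec{i}\in P_{k+1}^{k}}T_{\vec{i}}(B)=B$, hence $\left<\bigcup_{\vec{i}\in P_{k+1}^{k}}T_{\vec{i}}(B)\right>_{P_k}^{[d]}=\left<B\right>_{P_k}^{[d]}$. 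Thus $\left<B\right>_{P_k}^{[d]}$ is $c$-monochromatic, as required.
\end{proof}
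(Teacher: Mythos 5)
Your proof is correct and is exactly the paper's own derivation: the authors obtain Corollary~\ref{T2} by setting $l=k$ (with $d$ arbitrary) in Theorem~\ref{Tstepup_d} and using the same observation that $\bigcup_{\vec{i}\in P_{k+1}^{k}}T_{\vec{i}}(B)=B$, so the semigroup collapses to $\left<B\right>_{P_k}^{[d]}$. Nothing further is needed.
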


In an earlier version of the article we posed the following question.

\begin{question}
{\rm Does Corollary \ref{Gowers} admit an infinitary version?}
\end{question}
This question was recently solved in positive by Lupini \cite{Lu}.

\section{Applications to dynamics of $H(L)$}\label{dynamics}
In this section, we describe a natural closed subgroup $H$ of $H(L)$ and show that it is extremely amenable. 


It is not difficult to see that $\F_<$  consists of rigid elements and that it has the JPP. The proposition below thus asserts that $\F_<$ is a projective Fra\"{i}ss\'{e} class.
\begin{prop}\label{aapp}
The  family $\F_<$ has the AP.
\end{prop}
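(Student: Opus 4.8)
The plan is to prove the amalgamation property for $\F_<$ by reducing it to the amalgamation property for $\F$, which is already known from \cite{BK}, and then carefully piecing together the linear orders on branches. So suppose we are given $A_<, B_{1,<}, B_{2,<}\in\F_<$ together with epimorphisms $\phi_1\colon B_{1,<}\to A_<$ and $\phi_2\colon B_{2,<}\to A_<$. Forgetting the order relation $S$, we obtain fans $A, B_1, B_2\in\F$ and epimorphisms $\phi_1\colon B_1\to A$, $\phi_2\colon B_2\to A$ in $\F$, so by the AP for $\F$ there is a fan $C\in\F$ with epimorphisms $\psi_1\colon C\to B_1$ and $\psi_2\colon C\to B_2$ satisfying $\phi_1\circ\psi_1=\phi_2\circ\psi_2$. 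The task is then to equip $C$ with an appropriate linear order $S^C$ on its branches so that $\psi_1, \psi_2$ become epimorphisms in $\F_<$.

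The key structural fact I would use is the lemma just stated in the excerpt: an epimorphism $f\colon B_<\to A_<$ between ordered fans is exactly an epimorphism $f\colon B\to A$ of fans which is \emph{monotone} in the sense that the branches of $B$ mapping into a fixed branch $a_s$ of $A$ form a consecutive block, and these blocks appear in the correct order $a_1<a_2<\cdots$. So the branches of $B_{1,<}$ are partitioned by $\phi_1$ into consecutive intervals $I_1^1<I_2^1<\cdots<I_{|A|}^1$ (one per branch of $A$, in order), and similarly for $B_{2,<}$. Likewise $\psi_1$ partitions the branches of $C$ into consecutive blocks over branches of $B_1$ (once we fix $S^C$), but at this stage we instead think backwards: each branch $c$ of $C$ is sent by $\psi_1$ to a branch of $B_1$ and by $\psi_2$ to a branch of $B_2$, and by commutativity both of these lie over the same branch $\phi_1\psi_1(c)=\phi_2\psi_2(c)$ of $A$. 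I would define $S^C$ by ordering the branches of $C$ first according to which branch of $A$ they lie over (using $<_A$), then, within the group of branches lying over a fixed branch of $A$, refining the order so that it is simultaneously monotone for $\psi_1$ (respecting $<$ on the relevant sub-block of $B_{1,<}$) and for $\psi_2$.

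The one genuine point requiring care — and what I expect to be the main obstacle — is arranging a single linear order within each fiber-over-a-branch-of-$A$ that is compatible with both $\psi_1$ and $\psi_2$ at once. Fix a branch $a$ of $A$; let $J_1$ be the interval of branches of $B_{1,<}$ over $a$ and $J_2$ the interval of branches of $B_{2,<}$ over $a$. Every branch of $C$ over $a$ is assigned a pair $(\text{branch in }J_1,\text{branch in }J_2)$, and I want to linearly order these branches of $C$ so that the $J_1$-coordinate is (weakly) monotone and, as a tie-breaker within fixed $J_1$-coordinate, the $J_2$-coordinate is monotone. This lexicographic order on the pairs does the job: reading off $\psi_1$, branches with the same image in $B_1$ are consecutive and appear in the $J_1$-order; reading off $\psi_2$ we would worry that same-$J_2$-coordinate branches need to be consecutive, but since we may \emph{first} amalgamate more carefully — or, if necessary, pass to a further amalgam refining $C$ — we can ensure the map $C\to B_{2}$ is itself monotone. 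Concretely, I would not fix $C$ once and for all from the $\F$-amalgam but rather apply the $\F$-AP a second time, amalgamating $C$ over $B_2$ along the identity to spread out the branches; alternatively, since in $\F$ every fan of sufficient width over a given structure is available, I can simply choose the representative $C$ whose branches over each $a$ already come in the lexicographic arrangement. Either way, verifying via the stated lemma that $\psi_1$ and $\psi_2$ are now $\F_<$-epimorphisms and that $\phi_1\circ\psi_1=\phi_2\circ\psi_2$ still holds is then routine, completing the proof that $\F_<$ has the AP.
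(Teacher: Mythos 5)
Your overall strategy --- forget the order, invoke the AP for $\F$, and then try to put a linear order $S^C$ on the branches of the unordered amalgam --- has a genuine gap at exactly the point you flag as ``the main obstacle'', and the two repairs you sketch do not close it. The problem is that the AP for $\F$ only gives you \emph{some} amalgam $C$, and for an arbitrary such $C$ no compatible order need exist. Concretely, let $A$ be a single branch of height $1$, let $B_1$ and $B_2$ each have two branches of height $1$, and let $C$ be the fan with four branches on which $\psi_1,\psi_2$ realize all four pairs (branch of $B_1$, branch of $B_2$); this is a perfectly good amalgam in $\F$, but no linear order on the four branches of $C$ makes both $\psi_1$ and $\psi_2$ satisfy the block condition of the lemma, since the two branches over $b_1$ must be adjacent and likewise over $b_2$, which forces the $B_2$-fibers to be non-consecutive. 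Your first repair --- amalgamating $C$ with $B_2$ over $B_2$ along the identity --- does not help: the map from the new amalgam onto $C$ is surjective, so the set of realized pairs is unchanged and the obstruction persists. Your second repair --- ``choose the representative $C$ whose branches over each $a$ already come in the lexicographic arrangement'' --- is precisely the statement that an amalgam exists whose realized pairs form a staircase pattern (with, in addition, for each branch of $B_1$ and of $B_2$ at least one branch of $C$ mapping \emph{onto} it, suitably placed); that existence claim is the whole content of the proposition and is nowhere established in your argument.

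The paper does not factor through the AP for $\F$ at all: it constructs the ordered amalgam $D$ directly, branch by branch, so that compatibility with both orders is built in. It first proves a one-branch claim (amalgamation when $A$, $B$, $C$ are single branches), then, for $A$ a single branch, runs an induction of length $m+n$ in which each new branch $d_k$ of $D$ maps fully onto the least not-yet-covered branch of $B$ or of $C$ and onto the appropriate initial piece ($\phi_2^{-1}(\phi_1(b))$ or its analogue) of the other, so that the pairs realized automatically form a monotone staircase and both maps are surjective; the induction is anchored by first splitting $B$ and $C$ at branches mapping onto all of $A$, and the case of a general $A$ is handled by concatenating the amalgams over the individual branches of $A$. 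Some construction of this kind (or an equally explicit choice of a ``good'' amalgam) is what your proof is missing; without it the reduction to the unordered AP does not go through.
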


One can deduce this theorem from the JPP and from the Ramsey property, cf~\cite{KPT}, page 20.
 We will include a direct proof of Theorem \ref{aapp} in Appendix \ref{aaaa}.



Having shown that $\F_<$ is a projective  Fra\"{i}ss\'{e} class, we may now consider its projective  Fra\"{i}ss\'{e} limit $\mathbb{L}_<.$  
Let  $G=\auto$ denote the automorphism group of $\lel_<$. Combining the Kechris-Pestov-Todor\v{c}evi\'c correspondence from Section \ref{KPTcorrespondence} with $\F_<$ being a rigid Ramsey class, we obtain the that the group $\aut$ is extremely amenable.
 
\begin{proof}[Proof of Theorem \ref{eaauto}]
Follows from Theorems  \ref{ordfans} and \ref{kpt}. 
\end{proof}

Observe that the family $\F_<$ is {\em reasonable} with respect to $\F$, that is, for every $A,B\in\F$, an epimorphism 
$\phi: B\to A$,  and $A_<\in\F_<$
such that $A_<\restriction\mathcal{L}=A$, there is $B_<\in\F_<$ such that $B_<\restriction\mathcal{L}=B$ and $\phi: B_<\to A_<$
is an epimorphism. 
The proof of the following lemma uses that $\F_<$ is reasonable with respect to $\F$ and
implies that $\auto$ may be identified with a subgroup of $\aut$, cf. \cite{KPT} Proposition~5.2. 
Our proof is slightly more complicated than the one in \cite{KPT} since we do not have an analogue of the hereditary property for $\F.$
\begin{lemma}\label{lemmaidentify}
We have $\mathbb{L}_<\restriction \mathcal{L}$ is a projective Fra\"{i}ss\'e limit of $\F$ and therefore isomorphic to $\mathbb{L}.$ 
\end{lemma}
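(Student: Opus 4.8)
The plan is to verify the three defining properties of a projective Fra\"{i}ss\'{e} limit of $\F$ for the structure $\mathbb{L}_<\restriction\mathcal{L}$, using Proposition \ref{12ep}, which reduces the task to checking (L1), (L2), and the extension property with respect to $\F$. The key point throughout is the \emph{reasonableness} of $\F_<$ over $\F$: every fan in $\F$ admits some linear order on its branches turning it into a member of $\F_<$, and more importantly, any epimorphism in $\F$ can be realized as an epimorphism in $\F_<$ after an appropriate choice of order expansions on source and target.

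First I would establish (L1) and (L2) for $\mathbb{L}_<\restriction\mathcal{L}$. For (L1), given $A\in\F$, pick any expansion $A_<\in\F_<$ with $A_<\restriction\mathcal{L}=A$; by (L1) for $\mathbb{L}_<$ there is an epimorphism $\phi\colon\mathbb{L}_<\to A_<$, and $\phi$ is then an epimorphism from $\mathbb{L}_<\restriction\mathcal{L}=\mathbb{L}_<\restriction\mathcal{L}$ onto $A$ (forgetting $S$ only weakens the structure-preservation requirement, so epimorphisms survive the reduct). For (L2), a continuous $f\colon\mathbb{L}_<\to X$ into a finite discrete space is handled verbatim by (L2) for $\mathbb{L}_<$: we obtain $B_<\in\F_<$, an epimorphism $\phi\colon\mathbb{L}_<\to B_<$ and $f_0\colon B_<\to X$ with $f=f_0\circ\phi$; taking reducts, $\phi$ is an epimorphism onto $B_<\restriction\mathcal{L}\in\F$ and we are done. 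These two steps are essentially formal.

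The substantive step is the extension property: given $A,B\in\F$, epimorphisms $\phi_1\colon B\to A$ and $\phi_2\colon\mathbb{L}_<\restriction\mathcal{L}\to A$, I must produce an epimorphism $\psi\colon\mathbb{L}_<\restriction\mathcal{L}\to B$ with $\phi_2=\phi_1\circ\psi$. The idea is to lift everything into the ordered category. The map $\phi_2$, being continuous with finite range (in the sense of inducing a finite partition of $\mathbb{L}_<$ into the clopen sets $\phi_2^{-1}(a)$), can by Remark \ref{coveri} / property (L2) for $\mathbb{L}_<$ be refined: there is $A'_<\in\F_<$ and an epimorphism $\phi_2'\colon\mathbb{L}_<\to A'_<$ together with a surjection $\rho\colon A'_<\to A$ (forgetting order) with $\phi_2=\rho\circ\phi_2'$; one checks $\rho$ is an epimorphism in $\F$. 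Now use reasonableness: choose an order expansion $A_<$ of $A$ compatible with $\rho$ (so that $\rho\colon A'_<\to A_<$ is an $\F_<$-epimorphism), and then an order expansion $B_<$ of $B$ so that $\phi_1\circ\rho$ factors appropriately — concretely, pick $B_<\in\F_<$ with $B_<\restriction\mathcal{L}=B$ and with its branch order chosen so that $\phi_1\colon B_<\to A_<$ is an epimorphism, which is possible by reasonableness of $\F_<$ over $\F$. Apply the extension property of $\mathbb{L}_<$ (which holds since $\F_<$ is a projective Fra\"{i}ss\'{e} class and $\mathbb{L}_<$ is its limit) to $\phi_1\colon B_<\to A_<$ and $\rho\circ\phi_2'\colon\mathbb{L}_<\to A_<$: wait — one must be careful that the composite $\mathbb{L}_<\to A_<$ used here is exactly $\rho\circ\phi_2'$, which is an $\F_<$-epimorphism by construction. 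This yields $\psi\colon\mathbb{L}_<\to B_<$ with $\phi_1\circ\psi=\rho\circ\phi_2'$; taking $\mathcal{L}$-reducts gives an epimorphism $\mathbb{L}_<\restriction\mathcal{L}\to B$ with $\phi_1\circ\psi=\rho\circ\phi_2'$, and composing the right side with the forgetful identity and using $\phi_2=\rho\circ\phi_2'$ at the level of $\mathcal{L}$-structures (since $\rho$ read as a map $A'_<\to A$ is just $\rho$ read as $A'\to A$) we conclude $\phi_1\circ\psi=\phi_2$, as desired.

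The main obstacle I anticipate is bookkeeping around the forgetful functor: one has to be consistent about when a map is viewed as an $\F_<$-epimorphism versus an $\F$-epimorphism, and to check that the relevant order expansions can always be chosen compatibly — this is exactly where reasonableness of $\F_<$ over $\F$ is used, and the excerpt already flags that the argument is slightly more delicate than in \cite{KPT} because $\F$ lacks a hereditary property (one cannot simply pass to substructures, so all factorizations must be produced by amalgamation/extension within the class). Once (L1), (L2), and the extension property are in hand, Proposition \ref{12ep} gives that $\mathbb{L}_<\restriction\mathcal{L}$ is a projective Fra\"{i}ss\'{e} limit of $\F$, and Theorem \ref{is}(2) (uniqueness) identifies it with $\mathbb{L}$.
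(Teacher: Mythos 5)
Your reduction to Proposition \ref{12ep} and your verification of (L1) and (L2) match the paper, but the extension-property argument breaks at the step where you ``choose an order expansion $A_<$ of $A$ compatible with $\rho$ (so that $\rho\colon A'_<\to A_<$ is an $\F_<$-epimorphism).'' Reasonableness does not give this: it transfers an order from the \emph{target} of an epimorphism to its \emph{source} (given $\phi\colon B\to A$ in $\F$ and an expansion $A_<$ of $A$, it produces an expansion $B_<$ of $B$ making $\phi$ order-preserving), whereas you need to push the order of $A'_<$ \emph{forward} along $\rho$ onto $A$. This push-forward is impossible in general, and it is impossible precisely for the maps the extension property forces you to handle: if such an $A_<$ existed, then $\phi_2=\rho\circ\phi_2'$ would itself be an $\F_<$-epimorphism from $\lel_<$ onto $A_<$, but an arbitrary $\F$-epimorphism from $\lel_<\restriction\mathcal{L}$ onto $A$ need not respect $S^{\lel}$ for any ordering of the branches of $A$. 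Concretely, let $A$ have two branches of height $1$, let $C_<\in\F_<$ have four branches $c_1<c_2<c_3<c_4$ of height $1$, let $\xi\colon\lel_<\to C_<$ be any epimorphism, and let $\phi_2=\sigma\circ\xi$ where $\sigma$ maps the tops of $c_1,c_3$ to the top of the first branch of $A$ and the tops of $c_2,c_4$ to the top of the second: using the epimorphism property of $\xi$ one finds pairs $x,y$ with $S^{\lel}(x,y)$ witnessing both relative orders of the two branch tops of $A$, so no order on $A$ works; and since any choice of refinement leaves the composite $\rho\circ\phi_2'=\phi_2$ unchanged, no choice of $A'_<$ can repair the step.

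The paper's proof avoids ordering $A$ altogether: after refining $\eta$ through an epimorphism $\xi\colon\lel_<\to C_<$ with $\eta=\phi_2\circ\xi$ for an $\F$-epimorphism $\phi_2\colon C\to A$, it applies the amalgamation property of the \emph{unordered} class $\F$ to $\phi_1\colon B\to A$ and $\phi_2\colon C\to A$, obtaining $D$ with $\psi_1\colon D\to B$, $\psi_2\colon D\to C$ and $\phi_1\circ\psi_1=\phi_2\circ\psi_2$; reasonableness is then used in the legitimate direction to expand $D$ to $D_<$ so that $\psi_2\colon D_<\to C_<$ is an $\F_<$-epimorphism, and the extension property of $\lel_<$ is invoked over $C_<$ (for $\psi_2$ and $\xi$), yielding $\theta\colon\lel_<\to D_<$ with $\psi_2\circ\theta=\xi$, whence $\psi_1\circ\theta$ works after taking reducts. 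This detour through the AP of $\F$ is the missing idea in your argument; it is exactly the device that compensates for the absence of a usable push-forward (and of a hereditary property), the obstruction you flagged but did not resolve.
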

\begin{proof}   
Set $\mathbb{L}_0= \mathbb{L}_<\restriction \mathcal{L}$. By Proposition \ref{12ep},  it suffices  to show that $\mathbb{L}_0$ satisfies the
properties (L1) and  (L2)
in the definition of the projective Fra\"{i}ss\'{e}  limit, and it has
the extension property  with respect to $\F$. 
Since $\mathbb{L}_<$ has the properties (L1) and (L2) with respect to $\F_<$ and for every $A\in\F$ there is $A_<\in\F_<$ with $A_<\restriction  \mathcal{L}=A$, $\mathbb{L}_0$ has the properties (L1) and (L2) with respect to $\F.$

 To show the extension property, let $A,B\in\F$ and let $\phi_1: B\to A$ and $\eta : \mathbb{L}_0\to A$ be epimorphisms.
 By property (L2) for $\mathbb{L}_<,$ find $C_<\in\F_<$, an epimorphism $\xi :\mathbb{L}_<\to C_<$ and a  map $\phi_2: C\to A$,
 such that $C=C_<\restriction  \mathcal{L}$ and $\phi_2\circ \xi=\eta$. Note that since $\xi:\mathbb{L}_0\to C$ and
  $\eta : \mathbb{L}_0\to A$ are epimorphisms,
 so is  $\phi_2: C\to A$.  From the AP for $\F,$ find $D\in\F$, $\psi_1: D\to B$ and $\psi_2: D\to C$ such that $\phi_1\circ\psi_1=\phi_2\circ\psi_2$.
 Take any $D_<\in\F_<$ with $D_<\restriction  \mathcal{L}=D$ such that $\psi_2: D_<\to C_<$ is an epimorphism.
 Using the extension property for $\F_<$, find an epimorphism $\rho: \mathbb{L}_<\to D_<$ such that $\psi_2\circ\rho=\xi$.
 A simple calculation shows that the epimorphism $(\psi_1\circ\rho): \mathbb{L}_0\to B$ is as needed, i.e. it satisfies
 $\eta=\phi_1\circ (\psi_1\circ\rho)$.
\end{proof}

In the light of Lemma \ref{lemmaidentify}, from now on we will think of $\mathbb{L}_<$ as $(\lel,S^{\lel})$ and identify $\auto$ with a closed subgroup of $\aut.$ To demonstrate that Theorem \ref{eaauto} is not trivial, it is appropriate to show that the group $\auto$ is not a singleton. It will follow from the projective ultrahomogeneity of $\lel.$
Indeed, let $A\in\F_<$ be the fan of height 1 and width 3, with branches $a_1=(a_1^0,a_1^1)$, $a_2=(a_2^0,a_2^1)$, and
$a_3=(a_3^0,a_3^1)$, where $r_A=a_1^0=a_2^0=a_3^0$ is the root. Let $B\in\F_<$ be the fan 
of height 1 and width 2 with branches $b_1=(b_1^0,b_1^1)$ and $b_2=(b_2^0,b_2^1)$,
 where $r_B=b_1^0=b_2^0$ is the root.
Let $\phi:\lel_< \to A$ be an arbitrary epimorphism. Let $\alpha_1: A\to B$ be the epimorphism
given by $r_A\mapsto r_B$, $a_1^1\mapsto b_1^1$, $a_2^1\mapsto b_1^1$, and $a_3^1\mapsto b_2^1$, and let $\alpha_2: A\to B$ be the epimorphism
given by $r_A\mapsto r_B$, $a_1^1\mapsto b_1^1$, $a_2^1\mapsto b_2^1$, and $a_3^1\mapsto b_2^1$. Then the projective ultrahomogeneity applied to 
$\alpha_1\circ\phi, \alpha_2\circ\phi : \lel_<\to B$ provides us with a non-trivial automorphism of $\lel_<$.

Let $\pi:\lel\to\lel/R^{\lel}_S\cong L$ be the natural quotient map. 
Since $\lel_<=(\lel, S^{\lel})$ is a topological $\mathcal{L}_<-$structure, the relation $S^{\lel}$ is closed and consequently 
$\leq_L=\pi(S^{\lel})$ is a closed binary relation on $L$ which is reflexive and transitive, that is, it is a preorder.
We will call the Lelek fan equipped with $\leq_L$ the {\em preordered Lelek fan} 
and denote it by $L_<$.

In Section \ref{Lelekcons}, we pointed out that $\pi$ induces an injective continuous homomorphism which we denote by
$\pi^*$ from  $\aut$ 
 onto a subgroup of $H(L)$.
\begin{definition}\label{defhl}
We define the following two subgroups of $H(L)$
\begin{eqnarray}
\indent H &=& \overline{\pi^*(\auto)}^{H(L)} \\
 H(L_<) &=& \{h\in H(L): {\rm\ for\ every\ } x, y\in L\  (x\leq_L y\implies h(x)\leq_L h(y))\}.
 \end{eqnarray}
\end{definition}

\begin{prop}\label{hl}
We have $H=H(L_<)$.
\end{prop}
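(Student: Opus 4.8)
The plan is to prove the two inclusions $H \subseteq H(L_<)$ and $H(L_<) \subseteq H$ separately. For the first inclusion, I would first observe that every $h \in \pi^*(\auto)$ preserves $\leq_L$: if $h = \pi^*(\tilde h)$ for $\tilde h \in \auto$, then $\tilde h$ preserves $S^{\lel}$, and since $\leq_L = \pi(S^{\lel})$ and $\pi \circ \tilde h = h \circ \pi$, it follows that $x \leq_L y$ implies $h(x) \leq_L h(y)$. Thus $\pi^*(\auto) \subseteq H(L_<)$. Then it suffices to check that $H(L_<)$ is closed in $H(L)$ with the compact-open topology, so that it contains the closure $H = \overline{\pi^*(\auto)}$. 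Closedness follows because $\leq_L$ is a closed subset of $L \times L$ (it is the image of the closed set $S^{\lel}$ under the closed map $\pi \times \pi$ between compact Hausdorff spaces): if $h_n \to h$ in $H(L)$ and each $h_n$ preserves $\leq_L$, then for $x \leq_L y$ we have $h_n(x) \leq_L h_n(y)$ for all $n$, and passing to the limit (pointwise convergence, which the compact-open topology on a compact space refines) gives $h(x) \leq_L h(y)$.

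For the reverse inclusion $H(L_<) \subseteq H$, I would use the characterization of $\lel_<$ via the extension property, together with Remark \ref{coveri}, to show that any $h \in H(L_<)$ can be approximated arbitrarily well (in the compact-open topology on $H(L)$) by elements of $\pi^*(\auto)$. The key point is to lift $h$ through the projective Fra\"iss\'e structure. Given a basic open neighbourhood of $h$, which corresponds to a finite open cover $\mathcal{U}$ of $L$, I would pull $\mathcal{U}$ back to an open cover of $\lel$ and refine it by an epimorphism $\phi : \lel_< \to A$ for some $A \in \F_<$ (using Remark \ref{coveri} applied to $\lel_<$, noting $\lel_< \restriction \mathcal{L} = \lel$). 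The homeomorphism $h$, being order-preserving, induces a combinatorial "approximate" isomorphism at the level of $A$: more precisely, one gets a second epimorphism $\phi' : \lel_< \to A$ (roughly, $\phi'$ is $\phi$ precomposed with the lift of $h^{-1}$, made precise by choosing the cover fine enough and using that $h$ respects $\leq_L$, hence respects the $S^A$-structure encoded by $\phi$). Then projective ultrahomogeneity of $\lel_<$ yields an automorphism $\psi \in \auto$ with $\phi' = \phi \circ \psi$, and $\pi^*(\psi)$ agrees with $h$ up to the cover $\mathcal{U}$, i.e. lies in the prescribed neighbourhood of $h$. Hence $h \in \overline{\pi^*(\auto)} = H$.

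The main obstacle I anticipate is making precise the passage from "$h$ preserves the closed preorder $\leq_L$" to "$h$ induces a genuine automorphism of the finite ordered fan $A$ (up to refinement)". One has to be careful that $h$ does not merely permute the pieces $\phi^{-1}(a)$ compatibly with $R^A$, but also with the order relation $S^A$ — and that the endpoints and the branch structure are matched correctly. This requires choosing the epimorphism $\phi$ fine enough that the combinatorial data of $h$ relative to $\phi$ (which piece maps into which, and in what order) is unambiguous, and then verifying that the resulting map on $A$ is an $\mathcal{L}_<$-epimorphism using that $h$ is a homeomorphism preserving $\leq_L$ (so the induced map is a bijection on branches preserving their order). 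Here I would lean on the detailed structure of the family $\F_<$ and the description of epimorphisms in the lemma preceding Proposition \ref{hl} (the one characterizing epimorphisms $B_< \to A_<$ via the partition $1 = k_1 < \cdots < k_{m+1} = n+1$). Once this combinatorial translation is in place, the rest is a routine application of projective ultrahomogeneity and the definition of the compact-open topology.
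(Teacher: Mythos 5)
Your first inclusion is fine and is exactly the paper's (the paper is terser, simply asserting that $H(L_<)$ is closed; the closedness of $\leq_L$ that you use is recorded just before Definition \ref{defhl}).

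For $H(L_<)\subseteq H$ your architecture --- refine by an epimorphism, encode $h$ as a second epimorphism onto the same finite structure, apply projective ultrahomogeneity, read off an $\varepsilon$-close $\gamma^*$ --- is the paper's, but the step you defer is where all the work lies, and the device you name for it (``$\phi'$ is $\phi$ precomposed with the lift of $h^{-1}$'') does not exist: an $h\in H(L_<)$ has in general no lift to $\auto$ (if it did there would be nothing to prove), so $\phi'$ cannot be defined that way, and Remark \ref{coveri} alone is too weak to substitute for it. The paper's mechanism is: first prove a strengthened cover lemma (Lemma \ref{cover}) giving an open cover $(U_a)_{a\in A_<}$ \emph{indexed by} some $A_<\in\F_<$ and satisfying (C1)--(C4) (small diameter; the intersection pattern along each arc is compatible with $R^{A_<}$; each $U_a$ has a point in no other $U_{a'}$; compatibility with $S^{A_<}$) --- this needs the $\delta$-fattening of the closed fibre cover $\{\pi(\phi^{-1}(a))\}$, not just a refinement of an arbitrary cover. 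Then, since $h$ is a homeomorphism preserving $\leq_L$ and (after shrinking, by uniform continuity) $\mathrm{diam}(h[U_a])<\varepsilon$, the \emph{image} cover $(h[U_a])_{a\in A_<}$ again satisfies (C2)--(C4); one takes epimorphisms $\phi_1,\phi_2:\lel_<\to B_<$ refining $(\pi^{-1}(U_a))$ and $(\pi^{-1}(h[U_a]))$ and composes with $\alpha_i(b)=\max\{a:\pi\circ\phi_i^{-1}(b)\subset U_a\}$ (resp.\ $h[U_a]$), obtaining two epimorphisms $\psi_1,\psi_2:\lel_<\to A_<$, where surjectivity uses (C3), $R$-preservation uses (C2), and $S$-preservation uses (C4) together with $h\in H(L_<)$. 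Ultrahomogeneity gives $\gamma\in\auto$ with $\psi_1=\psi_2\circ\gamma$, and then $h(\pi(x))$ and $\gamma^*(\pi(x))$ both lie in $h[U_{\psi_1(x)}]$, so $d_{\sup}(h,\gamma^*)<\varepsilon$. Note also that you do not need (and should not try to prove) that $h$ induces an automorphism of $A_<$; you only need the transported cover to absorb a second epimorphism onto the \emph{same} $A_<$. So the missing ideas are precisely: the structured cover of Lemma \ref{cover} (in place of Remark \ref{coveri}), the transport of that cover by $h$ (in place of a nonexistent lift of $h^{-1}$), and the extra requirement $\mathrm{diam}(h[U_a])<\varepsilon$ without which the final sup-metric estimate fails.
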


In the proof of Proposition \ref{hl}, we will use  Lemma \ref{cover}, an analog of Lemma 2.14 from \cite{BK} but for $L_<$ instead of $L$. 

\begin{lemma}\label{cover}
Let $d<1$ be any metric compatible with the topology on $L$.  
Let $\varepsilon>0$ and let   $v$ be the top of $L_<$. Then there are $A_<=(A,S^A)\in\F_<$ and  
an open cover $(U_a)_{a\in A_<}$ of $L_<$ 
such that  
\begin{itemize}
\item[(C1)] for each $a\in A_<$, $\text{diam}(U_a)<\varepsilon$, 
\item[(C2)]  for every $I=[v,e]$ where $e$ is an endpoint and $v$ is the top point, $\{U_a\cap I:a\in A_<\}$ is a cover consisting of intervals such that 
each set $(U_a\cap I)\setminus  (U_{a'}\cap I)$ is connected and
whenever $U_a\cap I, U_{a'}\cap I$ have a non-empty intersection and there is $y\in U_{a'}\cap I$ with $U_a\cap I\subset [v,y]$ we have $R^{A_<}(a,a').$
\item[(C3)] for every $a\in A_<$ there is $x\in  L_<$ such that $x\in U_a\setminus(\bigcup \{U_{a'}: a'\in A_<, a'\neq a\}),$
\item[(C4)] for each $x,y\in L_<$ and $a,b\in A_<$ , if $x\leq_L y,$
$x\in U_a$ and $y\in U_b$, then  $S^{A_<}(a,b)$. 
\end{itemize}
\end{lemma}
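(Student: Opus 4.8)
The plan is to construct the cover directly by refining a suitable finite open cover of $L_<$ and then adjusting it so that all four conditions hold. First I would fix the metric $d<1$ and $\varepsilon>0$, and start from the fact (Remark \ref{coveri}) that every finite open cover of $\lel$ can be refined by an epimorphism onto some member of $\F$; however, since we must also control the order relation $S$, I would instead work at the level of $\lel_<$ and invoke property (L2) together with Remark \ref{coveri} applied to $\lel_<$ to obtain an epimorphism $\phi:\lel_<\to A_<$ with $A_<=(A,R^{A_<},S^{A_<})\in\F_<$ such that each fiber $\phi^{-1}(a)$ lies in an $\varepsilon/2$-ball. One then wants to push this cover down through $\pi:\lel\to L$. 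The natural candidate is $U_a=\pi\bigl(\bigcup\{\phi^{-1}(a'):R^{\lel_<}_S(a,a')\}\bigr)$ or a slight thickening thereof, so that neighbouring fibers of $\phi$ that get identified by $R_S$ are merged; this is exactly the construction used in Lemma 2.14 of \cite{BK} for $L$ alone, and the point here is to check that the order-theoretic conditions (C2) and (C4) come for free from the fact that $\phi$ preserves $S$.

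The key steps, in order, are: (i) choose the epimorphism $\phi:\lel_<\to A_<$ refining the $\varepsilon/2$-cover of $\lel$, using density of endpoints and (L2) to guarantee that $A$ can be taken to be a fan with branches of equal height and that each branch of $A$ corresponds to a genuine arc $[v,e]$ for some endpoint $e$ — this uses the characterization of $\lel$ from \cite{BK} and the fact that endpoints of $L$ are dense; (ii) define $U_a$ as the $\pi$-image of an appropriate union of fibers, slightly enlarged to an open set while keeping the diameter below $\varepsilon$, which gives (C1); (iii) verify (C3), which amounts to finding, for each $a$, a point in $L$ whose every $\phi$-preimage maps to $a$ and to no $R_S$-neighbour — here one uses that $\phi$ is an epimorphism (so $R^{A_<}(a,a')$ forces a witnessing pair upstairs, but one can pick an endpoint whose fiber is a single point sitting strictly inside $\phi^{-1}(a)$, which is possible by ultrahomogeneity and density of endpoints); (iv) verify (C2) by restricting to an arc $I=[v,e]$: the branch of $A$ over which $\phi(I)$ runs is linearly ordered by $R^{A_<}$, and the trace of the cover on $I$ is the image of the fibers along that branch, which are consecutive intervals, so connectedness of $(U_a\cap I)\setminus(U_{a'}\cap I)$ and the implication about $R^{A_<}(a,a')$ follow from $\phi$ respecting the tree order; (v) verify (C4), which is immediate: if $x\leq_L y$ with $x\in U_a$, $y\in U_b$, lift $x,y$ to $\tilde x,\tilde y\in\lel$ with $S^{\lel}(\tilde x,\tilde y)$ (possible since $\leq_L=\pi(S^{\lel})$) and $\phi(\tilde x)$ near $a$, $\phi(\tilde y)$ near $b$, so $S^{A_<}(\phi(\tilde x),\phi(\tilde y))$ and hence $S^{A_<}(a,b)$ after absorbing the $R_S$-merging.

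The main obstacle I anticipate is reconciling the open-set requirement with the combinatorial exactness of (C2) and (C3): fibers of $\phi$ are clopen, but their $\pi$-images need not be open in $L$, and a naive thickening can destroy the clean interval structure on arcs $I$ or the "privacy" point in (C3). The fix is to thicken only in the directions transverse to every arc $[v,e]$ — i.e., choose the enlargement of $U_a$ so that for each endpoint $e$ the trace on $[v,e]$ is exactly the image of a union of consecutive fibers along the relevant branch of $A$, which one can arrange because the arcs through distinct endpoints separate near their endpoints while the cover elements are uniformly small. A second, more technical point is ensuring that the relation $R^{A_<}$ — built from immediate successors in the fan $A$ — correctly records "$U_a\cap I\subset[v,y]$ for some $y\in U_{a'}\cap I$"; this is handled by taking $A$ fine enough that consecutive cover elements along any arc overlap in at most one fiber, so that overlap of $U_a\cap I$ and $U_{a'}\cap I$ with the containment condition pins down adjacency in the branch, hence $R^{A_<}(a,a')$. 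I would carry out (i)–(v) in that order, with the bulk of the work in step (i) (getting the right $\phi$) and step (iv) (the arc analysis), the remaining steps being short verifications.
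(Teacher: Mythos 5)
Your overall skeleton (pull back a fine open cover of $L$ to $\lel_<$, use (L2) to get an epimorphism $\phi:\lel_<\to A_<$ refining it, push the fibres down through $\pi$, and then make the sets open) does match the paper, but your candidate sets are wrong in a way that kills two of the four conditions. If you set $U_a=\pi\bigl(\bigcup\{\phi^{-1}(a'):R_S^{A_<}(a,a')\}\bigr)$, i.e.\ merge each fibre with the fibres of its $R_S$-neighbours, then (C3) fails outright: every point of $\pi(\phi^{-1}(a))$ also lies in $U_{a'}$ for each neighbour $a'$ of $a$ (since $U_{a'}$ contains the whole of $\pi(\phi^{-1}(a))$), and every point of $U_a$ coming from a neighbour's fibre lies in that neighbour's own set, so no ``private'' point as in your step (iii) can exist. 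The merging also breaks (C2): for three consecutive elements $a_1,a_2,a_3$ along a branch, $U_{a_1}$ and $U_{a_3}$ both contain $\pi(\phi^{-1}(a_2))$, and on a suitable arc $I$ the point $y=\max\bigl(\pi(\phi^{-1}(a_2))\cap I\bigr)$ witnesses the hypothesis of (C2) for the pair $(a_1,a_3)$, while $R^{A_<}(a_1,a_3)$ fails. No merging is needed: the identifications made by $R_S^{\lel}$ are already absorbed by $\pi$, and the right base sets are simply $V_a=\pi(\phi^{-1}(a))$, for which $V_a\cap V_{a'}\neq\emptyset$ forces $a=a'$ or $R_S^{A_<}(a,a')$ because $\phi$ is an epimorphism --- exactly what (C2) and (C4) require.

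Your mechanism for producing open sets is also not workable as stated. You ask that the trace of the open set $U_a$ on each arc $I=[v,e]$ be \emph{exactly} the image of a union of consecutive fibres; but that trace is relatively open in $I$, the fibre images are closed, and a connected arc has no nonempty proper clopen subsets, so this can only happen in degenerate cases. Likewise one cannot ``thicken only transversally to the arcs'': any open set containing an interior point of an arc contains a subarc around it. The paper's device is much simpler: start from a $\frac{\varepsilon}{3}$-cover (not $\frac{\varepsilon}{2}$, to leave room for the enlargement), check that the closed cover $\{V_a\}$ satisfies (C1)--(C4), and then take $U_a=B(V_a,\delta)$ for a single $\delta<\frac{\varepsilon}{3}$ chosen by compactness so that $B(V_a,\delta)\cap B(V_{a'},\delta)\neq\emptyset$ if and only if $V_a\cap V_{a'}\neq\emptyset$ (the same compactness argument lets one keep a private point of each $V_a$ outside all the other thickened sets). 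This uniform small enlargement preserves the intersection pattern, hence (C2) and (C4), keeps diameters below $\varepsilon$, and requires no arc-by-arc surgery.
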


\begin{proof}[Proof of Lemma \ref{cover}]
Let $\mathcal{U}$ be a finite open $\frac{\varepsilon}{3}$-cover of $L_<$ and let $\mathcal{V}=\{\pi^{-1}(U): U\in\mathcal{U}\}$.
Using (L2) in the definition of the projective Fra\"{i}ss\'{e} limit, find $A_<\in\F_<$ and an epimorphism
$\phi:\lel_<\to A_<$ that refines $\mathcal{V}$. The set
\[ \mathcal{C}_1=\{V_a=\pi(\phi^{-1}(a)): a\in A_<\}.\]
is a closed $\frac{\varepsilon}{3}$-cover of $L_<$ that satisfies all properties (C1)-(C4).
Since $L$ is compact,  the distance between any 
$D,E \in \mathcal{C}_1,$ $D\cap E=\emptyset$ is positive, that is, $\text{inf}\{d(x,y):x\in D, y\in E\}>0$.
So we can find $0 < \delta <\frac{\varepsilon}{3}$ such that for every $D,E\in \mathcal{C}_1$,  we have 
\[B(D,\delta)\cap B(E,\delta)\neq\emptyset  \ \ \ \iff \ \ \  D\cap E\neq\emptyset,\]
where for $X\subset L_<$ we set $B(X,\delta)=\{y\in L_<: \exists_{x\in X} \ d(y,x)<\delta\}$.
Then the cover 
$ \mathcal{C}_2=\{ U_a=B(V_a,\delta): a\in A_<\}$
 satisfies the properties (C1)-(C4) as well.
\end{proof}

\begin{proof}[Proof of Proposition \ref{hl}]
For every $h\in\auto,$ we have that $h^*\in H$. Since $H(L_<)$ is closed, it follows that $H\subset H(L_<)$.

To show the converse, take $h\in H(L_<)$ and $\varepsilon>0$. 
Let $d<1$ be any metric compatible with the topology on $L$ and let $d_{\sup}$ be the corresponding supremum metric on $H(L)$.
We will find $\gamma\in\auto$ such that $d_{\sup}(h,\gamma^*)<\varepsilon$.
Let $A_<\in\F_<$ and let  $(U_a)_{a\in A_<}$ be an open cover of $L_<$  as in Lemma \ref{cover}. 
Since $h$ is uniformly continuous, we can assume additionally that for each $a\in A_<$, $\text{diam}(h[U_a])<\varepsilon$. As $h$ is a homeomorphism, $(h[U_a])_{a\in A_<}$ also satisfies conditions (C2)-(C3) of Lemma \ref{cover}. Finally, $h\in H(L_<)$ ensures that  $(h[U_a])_{a\in A_<}$ satisfies (C4).

Consider the open covers $\{V^1_a := \pi^{-1}(U_a):a\in A_<\}$ and $\{V^2_a:=\pi^{-1}(h[U_a]):a\in A_<\}$  of $\lel_<$.
By the property (L2), we can find $B_<\in \F_<$ and epimorphisms
$\phi_i: \lel_<\to B_<$ for $i=1,2$ that refine the cover $(V^i_a)_{a\in A_<}$.
Define $\alpha_i: B_<\to A_<$ by 
$b\mapsto\max\{a\in A_<: \pi\circ\phi_i^{-1}(b)\subset U_a\},$
where the maximum is taken with respect to the natural partial order on $A.$ 
Let $\psi_i: \lel_<\to A_<$ be the composition  $\alpha_i\circ\phi_i.$
We will show that $\psi_i,$ $i=1,2$ are epimorphisms. Since $\phi_i$ are continuous, so are $\psi_i,$
 and by (C3) they are  onto. 
The property (C2) implies that if $x,y\in \lel_<$ satisfy $R^{\lel_<}(x,y)$ then 
$R^{A_<}(\psi_i(x),\psi_i(y))$, $i=1,2$.
 Finally, (C4) provides that if $S^{\lel_<}(x,y)$ then $S^{A_<}(\psi_1(x),\psi_1(y))$. Since $(h[U_a])_{a\in A_<}$ also satisfies (C2)-(C4), the same is true for $\psi_2,$ and we can conclude    
that $\psi_1,\psi_2$ are epimorphisms.

The projective ultrahomogeneity 
 gives us $\gamma\in\auto$ such that $\psi_1=\psi_2\circ \gamma .$
It remains to show that $d_{\sup}(h,\gamma^*)<\varepsilon$. 
Pick any $x\in \lel_<$ and let $a=\psi_1(x)$.  
Then
\[ \gamma^*(\pi(x))\in\gamma^*(\pi\circ\psi_1^{-1}(a))=\pi\circ\gamma\circ\psi_1^{-1}(a)=\pi\circ\psi_2^{-1}(a)\subset h[U_a].\] 
It means that $\gamma^*(\pi(x)),$ $h(\pi(x))\in h[U_a]$, and since $\text{diam}(h[U_a])<\varepsilon$, we get the required conclusion.

\end{proof}

\appendix

\section{}\label{aaaa}

We present below a proof of Theorem \ref{aapp}.
\begin{proof}[Proof of Theorem \ref{aapp}]
Take  $A,B,C\in\F_<$ together with epimorphisms $\phi_1: B\to A$ and $\phi_2: C\to A$.

For clarity, we start with the simplest case, which will be applied in the induction further on.
\begin{claim*}
Assume that  $A, B$ and $C$ all consist of one branch only. Then there are $D\in \F_<$ and epimorphisms $\psi_1:D\to B$ and $\psi_2:D\to C$ such that $\phi_1\circ\psi_1=\phi_2\circ\psi_2.$
\end{claim*}
\begin{proof}[Proof of Claim]
Suppose that $A$ has height $l$ and enumerate it as ${a^0,a^1,\ldots, a^l}$ with $a^0$ the root and $R^A(a^i,a^{i+1})$ for $i=0,1,\ldots,l-1.$ For  $0\leq i\leq l$ and $\varepsilon=1,2,$ let $I^{\varepsilon}_i=\phi^{-1}_{\varepsilon}(a^i)$ 
and let $m_i= {\rm{max}}\{|I_i^1|,|I_i^2|\}.$
Let $D\in \F_<$ have a single branch of height $M=m_0 + \ldots + m_l$  and write it as $D=\bigcup_{i=0}^l K_i$ with $|K_i|=m_i$ and all elements in $K_i$ preceding elements in $K_{i+1}$ in the natural order. Then $\psi_{\varepsilon}$ mapping $K_i$ onto $I^{\varepsilon}_i$ in an $R$-preserving manner for $\varepsilon=1,2$ finish the argument. 
\end{proof}

Second, we deal with the situation when $A=\{a^0,\ldots,a^l\}$  is a single branch 
and $B$ and $C$ have $m$ and $n$ branches, respectively.

The relation $S^B$ induces an ordering $b_1<b_2<\ldots<b_m$ of branches in $B$, and 
the relation $S^C$ induces an ordering $c_1<c_2<\ldots<c_n$ of branches in $C$. We will perform induction on $k=1,\ldots,m+n$ assuming that $\phi_1\restriction b_m$ and $\phi_2\restriction c_n$ both map onto $A$. Later we will see how to eliminate this assumption.
In  step $k,$ we will construct a branch $d_k$ of $D$  together with homomorphisms $\psi_{1,k}:d_k\to B$ and $\psi_{2,k}:d_k\to C$ such that $\phi_1\circ \psi_{1,k}=\phi_2\circ \psi_{2,k}.$ We will ensure that the image of $\bigcup\{\psi_{\varepsilon,i}:i=1,\ldots,k, \varepsilon=1,2\}$ contains the first $k_B$ branches of $B$ and the first $k_C$ branches of $C$ for some $k_B$ and $k_C$
such that $k=k_B+k_C.$ Each of the steps resembles the proof of Claim.

\noindent {\bf Step $\boldsymbol{k=1}$.}
Since $\phi_1(b_1)$ and $\phi_2(c_1)$ are initial segments of the single branch of $A,$ we may without loss of generality assume that $\phi_1(b_1)\subset\phi_2(c_1).$ Applying Claim with $b_1$ in place of $B$ and  $\phi_2^{-1}(\phi_1(b_1))$ in place of $C,$ we may find a branch $d_1$ and homomorphisms $\psi_{1,1}:d_1\to b_1$ and $\psi_{2,1}:d_1\to c_1$ with $\psi_{1,1}$ onto $b_1,$ $\psi_{2,1}$ onto $\phi_2^{-1}(\phi_1(b_1))$ such that $\phi_1\circ \psi_{1,1}=\phi_2\circ\psi_{2,1}.$

\noindent {\bf Step $\boldsymbol{k+1}$.} 
Suppose that we have constructed  branches $d_i$ with homomorphisms $\psi_{1,i}: d_i\to B$ and $\psi_{2,i}:d_i\to C$ for $i=1,\ldots,k$ as required.
Pick branches $b$ in $B$ and $c$ in $C$ with the smallest indices such that they are not contained in the image of $\bigcup\{\psi_{\varepsilon,i}:i=1,\ldots,k, \varepsilon=1,2\}.$
Since $\phi_1(b)$ and $\phi_2(c)$ are   initial segments of the only
 branch of $A$, we can assume without loss of generality that 
either ($\phi_1(b)\subset \phi_2(c)$ and $\phi_1(b)\neq \phi_2(c)$)
or ($\phi_1(b)= \phi_2(c)$ and $m-k\geq n-k$). With $b$ in place of $b_1$ and $c$ in place of $c_1,$ we may proceed in the same way as in Step $k=1$ to obtain a branch $d_{k+1}$ and homomorphisms $\psi_{1,k+1}:d_{k+1}\to B$ and $\psi_{2,k+1}:d_{k+1}\to C$ with $\psi_{1,k+1}$  onto $c$, $\psi_{2,k+1}$ onto $\phi_{2}^{-1}(\phi_1(b))$ and $\phi_1\circ\psi_{1,k+1}=\phi_2\circ\psi_{2,k+1}.$ This finishes the induction step.

Note that $\phi_1\restriction{b_m},$ $\phi_2\restriction{c_n}$ being  onto $A$ 
allows us to proceed as above for $(n+m)$-many steps, in each step covering a new branch in $B$ or $C$. We remark that we may ensure that all $d_k$'s have the same height.

Let $D\in \F_<$ be the union of the branches $(d_k)_{k=1}^{m+n}$ with their roots identified and $S^D$ induced by the order $d_1<\ldots < d_{m+n}$. Then $\psi_1:D\to B$ and $\psi_2:D\to C$ given by $\psi_{\varepsilon}\restriction{d_k}=\psi_{\varepsilon,k}$ for $\varepsilon=1,2$ are as required.


When $\phi_1\restriction{b_1}$ and $\phi_2\restriction{c_1}$ are onto $A$,
we proceed similarly as above  
(starting the induction with $b_m$ and $c_n$ and going backwards).

{In the case when $A$ has one branch and $B$ and $C$ are arbitrary, let $t_B$ and $t_C$ denote any branch in $B$ and $C,$ respectively, such that $\phi_1\restriction{t_B}$ and $\phi_2\restriction{t_C}$ are onto $A.$ We split $B$ into two fans, $B_1$ and $B_2,$ such that all branches in $B_1$ precede the branches in $B_2$ in the order corresponding to $S^B$ (which we denote by $B_1<B_2$) and such that $B_1\cap B_2=t_B.$ Similarly, we split $C$ into $C_1<C_2$ with $C_1\cap C_2=t_C.$}

For $B_i$, $C_i$, $\phi^i_{1}=\phi_1\restriction B_i$, and $\phi^i_{2}=\phi_2\restriction C_i$, {we} obtain  $D_i$, $\psi^i_1$, and $\psi^i_2$, 
 $i=1,2$, and observe that $D=D_1\cup D_2$ with  their roots identified and {$S^D$ inducing} $D_1< D_2,$ 
$\psi_1=\psi^1_1\cup\psi^2_1$, and $\psi_2=\psi^1_2\cup \psi^2_2$ are as required. 

In a general situation, when $A$ consists of branches $a_1<\ldots<a_s$, we follow the procedure above  for each $a_i$,
$B_i=\phi_1^{-1}(a_i)$, $C_i=\phi_2^{-1}(a_i)$, $\phi^i_{1}=\phi_1\restriction B_i$, and $\phi^i_{2}=\phi_2\restriction C_i$. For each $i,$ we obtain  $D_i$, 
$\psi^i_1:D_i\to B_i $, and $\psi^i_2:D_i\to C_i$, $i=1,\ldots, l$, and conclude that $D=D_1\cup\ldots\cup D_s$ with roots identified and such that
 $D_1<\ldots< D_s$, $\psi_1=\psi^1_1\cup\ldots \cup \psi^s_1$, and $\psi_2=\psi^1_2\cup \ldots\cup\psi^s_2$ finish the proof.

\end{proof}

 \thanks{{  \bf Acknowledgments. }{\rm 
We  would like to thank Miodrag Soki\'c, Gianluca Basso, and two anonymous referees} for a number of suggestions that  helped us to improve the presentation of the paper. We thank Martino Lupini for pointing out an error in Section 8 of an earlier version of the paper. The first author was supported by FAPESP (2013/14458-9) and FAPESP (2014/12405-8).
}

\end{document}